\newtheorem{thm}{Theorem}[section]
\newtheorem{step}{Step}
\newtheorem{lem}[thm]{Lemma}
\newtheorem{defi}[thm]{Definition}
\newtheorem{notation}[thm]{Notation}
\newtheorem{claim}[thm]{Claim}
\newtheorem{observation}[thm]{Observation}
\newtheorem{corr}[thm]{Corollary}
\theoremstyle{remark}
\newtheorem{remark}[thm]{Remark}
\providecommand{\keywords}[1]{\textbf{\textit{Keywords ---}} #1}
\newcommand{\D}{\mathcal D}
\newcommand{\cH}{\mathcal H}
\newcommand{\C}{\mathcal C}
\begin{document}

\title{Independent sets in the union of two Hamiltonian cycles}

\makeatother
\author{Ron Aharoni}
\address{Department of Mathematics\\ Technion, Haifa, Israel}
\email[Ron Aharoni]{raharoni@gmail.com}
\thanks{\noindent The research of the first author was
supported by BSF grant no. $2006099$, by GIF grant no. I
$-879-124.6/2005$, by the Technion's research promotion fund, and by
the Discont Bank chair.}

\author{Daniel Soltész}
\address{Department of Computer Science and Information Theory \\ Budapest University of Technology and Economics}
\email[Daniel Soltész]{solteszd@math.bme.hu}
\thanks{\noindent The research of the second author was
supported by the Hungarian Foundation for Scientific Research Grant (OTKA) No. 108947}

\begin{abstract} Motivated by a question on the maximal number of vertex disjoint Schrijver graphs in the Kneser graph, we investigate the following function, denoted by $f(n,k)$: the maximal number of Hamiltonian cycles
on an $n$ element  set, such that no two cycles share a common
independent set of size more than $k$. We shall mainly be interested in the behavior of $f(n,k)$ when $k$ is a linear function of $n$, namely $k=cn$. We show a threshold phenomenon: there exists a constant $c_t$
such that for $c<c_t$, $f(n,cn)$ is bounded by a constant depending
only on $c$ and not on $n$, and for $c_t <c$, $f(n,cn)$ is
exponentially large in $n ~(n \to \infty)$. We prove  that $0.26 < c_t < 0.36$, but the exact value of $c_t$ is not determined. For the lower bound we prove a technical lemma, which for graphs that are the union of two Hamiltonian cycles establishes a relation between the independence number and the number of $K_4$ subgraphs. A corollary of this lemma is that if a graph $G$ on $n>12$ vertices is the union of  two Hamiltonian cycles and $\alpha(G)=n/4$, then $V(G)$
can be covered by vertex-disjoint $K_4$ subgraphs.


\end{abstract}

\keywords{ Independent set, Hamiltonian cycle, union, threshold.}

\maketitle

\section{Introduction}

In this paper we study a ``pigeonhole'' phenomenon for Hamiltonian cycles - 
in a large enough set of such cycles there are necessarily two that are close, in the sense that their union contains a large independent set (meaning that they are similar to each other). The motivation comes from 
Schrijver subgraphs of the Kneser graph. 
The {\em Kneser graph} $KG[n,k]$ has as vertices the $k$-subsets of $[n]$, two vertices being connected if the sets are disjoint. A celebrated result of Lov\'asz \cite{lovasz} is that the  chromatic number of $KG[n,k]$ is $n-2k+1$. His proof used topology, and it gave birth to the field of topological combinatorics. Later Schrijver proved that a relatively small induced subgraph of $KG[n,k]$ already has the same chromatic number. The vertices of this subgraph 
are those $k$-sets that are independent on a given, fixed, Hamiltonian cycle on $[n]$.
 The question we are interested in is what is the largest size of a set of vertex disjoint Schrijver subgraphs of $KG[n,k]$. Two Schrijver subgraphs are vertex disjoint if their Hamiltonian cycles do not share an independent set of size $k$, meaning that the union of their Hamiltonian cycles has independence number less than $k$. So, the question is on the maximal number of Hamiltonian cycles with a given bound on the independence number of each pairwise union.

Throughout the paper, unless otherwise stated the size of the vertex set of any graph mentioned is denoted by $n$. As usual,  $\alpha(G)$ denotes the maximal size of an independent set in a graph $G$. If $G$ and $H$ are graphs on the same ground set $V$, we write $G \cup H$ for the graph on $V$ with $E(G) \cup E(H)$ as edge set. A {\em Hamiltonian cycle} on $V$ is a simple cycle containing all vertices of $V$.

\begin{defi}
 $f(n,k)$ is the maximal size of a  set $\cH$ of Hamiltonian cycles on $n$ vertices, such that $\alpha(H_1 \cup H_2) \le k$ for every $H_1\neq H_2 \in \cH$.
\end{defi}

We study $f(n,k)$ in the case where $k$ is a linear function of $n$, namely $k=cn$. This is very natural as the independence number of a Hamiltonian cycle grows roughly like a linear function of $n$. Our main observation is the following threshold phenomenon.

\begin{thm}\label{threshold}
There is a constant $c_t$, such that for $c<c_t$ the function $f(n,cn)$ is bounded, and for $c_t<c$ the function $f(n,cn)$ is exponentially large in $n$.
\end{thm}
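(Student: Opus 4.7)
The plan is to take
\begin{equation*}
c_t := \sup\{c : f(n,cn) \text{ is bounded in } n\},
\end{equation*}
which is well-defined because $f(n,k)$ is monotone non-decreasing in $k$. By construction, for $c < c_t$ the function $f(n,cn)$ is bounded, so the first half of the theorem is free. All the work is in showing that $f(n,cn)$ is exponential for every $c > c_t$. The range $c \ge 1/2$ is immediate, since $\alpha(H) \le n/2$ for any Hamiltonian cycle $H$ and so the pairwise-union constraint is vacuous while there are $(n-1)!/2$ Hamiltonian cycles on $[n]$. So one may assume $c_t \le c < 1/2$.

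The main tool is a chaining construction. Fix $c' \in (c_t, c)$ and an integer $M \ge 2$ (to be chosen later). Since $f(n,c'n)$ is unbounded, there exist $n_0$ and Hamiltonian cycles $H_1,\dots,H_M$ on $[n_0]$ with $\alpha(H_i \cup H_j) \le c'n_0$ for all $i \ne j$. Fix $k$ disjoint copies $B_1,\dots,B_k$ of $[n_0]$ and, for each word $s = (s_1,\dots,s_k) \in [M]^k$, build a Hamiltonian cycle $C_s$ on $V := B_1 \sqcup \cdots \sqcup B_k$ by placing a copy of $H_{s_j}$ on $B_j$ and cyclically gluing consecutive blocks: delete one canonically chosen edge from each of the two cycles at the interface and add two cross-edges joining the freed endpoints. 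A routine check shows $C_s$ is a Hamiltonian cycle, and choosing the deleted edge by a rule that depends only on the cycle makes $s \mapsto C_s$ injective.

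To estimate $\alpha(C_s \cup C_t)$ for $s \ne t$, write $D := \{j : s_j \ne t_j\}$. The restriction of $C_s \cup C_t$ to $B_j$ is $H_{s_j} \cup H_{t_j}$ with at most four gluing edges removed, so any set independent in $C_s \cup C_t$ contributes at most $c'n_0 + O(1)$ vertices to $B_j$ when $j \in D$, and at most $\lfloor n_0/2 \rfloor + O(1)$ vertices when $j \notin D$ (where the two cycles coincide). Summing,
\begin{equation*}
\alpha(C_s \cup C_t) \le |D|\, c' n_0 + (k - |D|)\tfrac{n_0}{2} + O(k),
\end{equation*}
which is at most $ckn_0$ whenever $|D|/k \ge \delta := (1/2 - c)/(1/2 - c')$; here $\delta \in (0,1)$ since $c_t < c' < c < 1/2$, and the $O(k)$ slack is absorbed by taking $n_0$ large. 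The Gilbert--Varshamov bound, applied with alphabet size $M > 1/(1-\delta)$, produces a code $\mathcal S \subseteq [M]^k$ of minimum Hamming distance at least $\delta k$ and cardinality $M^{\Omega(k)}$. The family $\{C_s : s \in \mathcal S\}$ then witnesses $f(kn_0, ckn_0) \ge M^{\Omega(k)}$, exponential in $n = kn_0$.

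I expect the main obstacle to be the independence-number bookkeeping for $C_s \cup C_t$: one must argue carefully that the up-to-four gluing edges deleted per block, together with the cross-edges, do not allow an independent set to ``leak'' across block boundaries beyond the per-block counts above, and that the aggregate $O(k)$ error is genuinely swallowed by the slack $c - c'$. The coding step is a standard Gilbert--Varshamov application, and the injectivity of $s \mapsto C_s$ is ensured by the uniform rule for gluing.
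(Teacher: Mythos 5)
Your proposal is correct and follows essentially the same route as the paper: both reduce to a finite family of pairwise-far Hamiltonian cycles on a small ground set $[n_0]$, concatenate blocks indexed by words over that family, use a code to guarantee that most blocks differ (the paper draws random words and applies Chernoff, which is just the probabilistic form of your Gilbert--Varshamov step, with the identical quantitative condition $M>(1/2-c')/(c-c')$), and then reglue each block union into a single Hamiltonian cycle while absorbing the $O(k)$ boundary error into the slack $c-c'$. The bookkeeping you flag as the main obstacle is handled in the paper exactly as you anticipate, so no further ideas are needed.
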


If $H$ is a Hamiltonian cycle, then $\alpha(H) =\lfloor \frac{n}{2}\rfloor$. Given two Hamiltonian cycles $H_1$ and $H_2$, their common independence number, $\alpha(H_1 \cup H_2)$, lies between $ \frac{n}{4}$ (this bound follows from Brooks' theorem) and $ \frac{n}{2}$. Thus, the trivial bounds for the threshold are  $0.25 \leq c_t \leq 0.5$. We improve these as follows.

\begin{thm} \label{bounds}
$$0.26627 \approx \frac{45}{169} \leq c_t \leq \frac{11}{30} \approx 0.3666. $$
\end{thm}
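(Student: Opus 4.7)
The statement bundles two unrelated halves: the \emph{lower} bound $c_t \geq 45/169$, which asserts that $f(n,cn)$ stays bounded once $c$ is sufficiently small, and the \emph{upper} bound $c_t \leq 11/30$, which asserts exponential growth once $c$ is sufficiently large. I would attack them separately, with completely different techniques.

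For the lower bound, the plan is to leverage the $K_4$-covering phenomenon advertised in the abstract. The expectation is that a quantitative refinement holds: if $G$ is the union of two Hamiltonian cycles and $\alpha(G) \leq cn$ with $c$ a little above $1/4$, then $G$ is close to a disjoint union of $K_4$'s, with at most $O((c - 1/4)n)$ vertices outside the $K_4$-cover, presumably via a bound of the form $\alpha(G) \geq n/4 + \gamma t$, where $t$ counts the uncovered vertices. Plugging in $c < 45/169$ would force $t$ to be a small linear fraction of $n$. Given this rigidity, I would then argue that in a family $\cH$ whose pairwise unions all carry such a near-$K_4$ structure, fixing one $H_0 \in \cH$ severely constrains any other member $H \in \cH$: the $K_4$-cover of $H_0 \cup H$ must be nearly compatible with local structure already carried by $H_0$, so only boundedly many $H$ can be added, with the bound depending only on $c$.

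For the upper bound, I would aim at a structured construction rather than a purely random one, because the typical independence number of a union of two random Hamiltonian cycles (a pseudo-random $4$-regular graph) exceeds $(11/30)n$. The idea is to locate a small ``seed'' graph on a vertex set $V_0$ (presumably $|V_0| = 30$) that decomposes into several Hamiltonian cycles whose pairwise unions have independence ratio exactly $11/30$, and to blow it up: splice together copies of the seed along a common backbone to produce Hamiltonian cycles on $[n]$. With $t$ seed cycles this yields on the order of $t^{n/30}$ global cycles, and one would need to verify that any two such global cycles have union whose independence number factors block-by-block into $(11/30) \cdot 30 \cdot (n/30) = (11/30)n$.

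The hardest step is the quantitative $K_4$ lemma behind the lower bound: the passage from the qualitative statement ``$\alpha(G) = n/4$ implies a $K_4$-cover'' to a linear excess bound ``$\alpha(G) \geq n/4 + \gamma t$'' requires a careful structural analysis of $4$-regular unions of two Hamiltonian cycles, with many cases according to how a local $K_4$ can be disrupted, and it is precisely the extremal configuration in this analysis that should pin down the specific constant $45/169$. On the construction side, the delicate step is to glue the seed Hamiltonian cycles into genuine Hamiltonian cycles on $[n]$ without creating long independent sets that cross the blocks and so inflate $\alpha$ above the target ratio.
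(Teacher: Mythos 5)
Your outline bifurcates the problem correctly, but both halves have gaps that would prevent the argument from closing, and the constants would not emerge the way you expect.

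For the upper bound, the paper's construction also builds a long Hamiltonian cycle out of blocks carrying a small ``seed'' family, but the seed is not tuned to ratio $11/30$: it is five Hamiltonian cycles on $\mathbb{Z}_{n_0}$ ($n_0$ odd, divisible by $3$) whose pairwise unions are \emph{triangle}-covered, hence of independence ratio $1/3$. The figure of $11/30$ is \emph{not} achieved block-by-block; it falls out of the stitching cost. The reason your ``take all $t^{n/30}$ combinations'' plan fails is that many of those combinations agree on all but a few blocks, and on an agreed block the union is just one cycle, contributing $\lfloor n_0/2 \rfloor$ to $\alpha$ rather than $n_0/3$. So you cannot use the full product set: you must restrict to a subfamily in which any two chains disagree on almost all coordinates. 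The paper does this with a random selection and a Chernoff bound (Lemma~\ref{semi-random}), so that with high probability any two chains agree on roughly a $1/5$ fraction of blocks; those blocks each cost $n_0/2$, the rest cost $n_0/3$, plus an $O(N)$ loss from the gluing vertices, and the ratio $\frac{1}{5}\cdot\frac{1}{2}+\frac{4}{5}\cdot\frac{1}{3}=\frac{11}{30}$ is exactly that weighted average. Your proposal has no mechanism to enforce the large-Hamming-distance condition; an explicit code would work, but some such device is essential, and without it the construction is not correct.

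For the lower bound, you correctly identify the quantitative $K_4$-cover refinement as the engine: the paper proves $\alpha(G)\ge \zeta(G)+\frac{7}{26}(n-4\zeta(G))-O(1)$ (Corollary~\ref{smooth}), which is your $\alpha\ge n/4+\gamma t$ with $\gamma=1/52$. But that inequality alone only tells you each pairwise union has many $K_4$'s; it does not bound $|\cH|$. Your proposed ``compatibility'' step, that fixing $H_0$ constrains the other members, is where the real work happens, and it cannot be done by a purely local rigidity argument. The paper's mechanism is the key and you are missing it: a second statistic $\psi$ (the maximum number of disjoint induced $P_4$'s) enters the improved bound $\alpha(G)\ge \frac{7}{26}n-\frac{1}{13}\zeta+\frac{1}{2}\psi-O(1)$ (Lemma~\ref{quality_indep}), and crucially, good paths \emph{raise} $\alpha$. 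Observation~\ref{psizeta} shows that if $C\cup D_1$ and $C\cup D_2$ share $m$ of their $K_4$'s, then $\psi(D_1\cup D_2)\ge m$. Combining this with a Johnson-type bound on set systems with small pairwise intersections and the Ajtai--Koml\'os--Szemer\'edi Ramsey bound, one extracts from any sufficiently large family a pair $C,D$ with $\psi(C\cup D)/n \gtrsim (\zeta(C\cup D)/n)^2$, and then $-d \ge -\frac{1}{13}y+\frac{1}{2}y^2$ with $y=\zeta/n$ is minimized at $y=1/13$ giving $d\le 1/338$, hence $c_t\ge \frac{7}{26}-\frac{1}{338}=\frac{45}{169}$. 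So the constant does not come from the extremal configuration of the $K_4$-cover lemma, as you predict, but from optimizing a trade-off between $\zeta$ and $\psi$; without $\psi$ and without the pigeonhole-on-shared-$K_4$'s step you only get the existence of $K_4$'s, not a bound on $|\cH|$.
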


\begin{defi} A graph is said to be {\em two-miltonian} if it is the union of two Hamiltonian cycles.
\end{defi}

Besides the value of $c_t$, we are also interested in the first non-trivial values of the function $f$, namely  $f(n,n/2-1)$ and $f(n,n/4)$. We will show by an easy argument that $f(n,n/2-1) \sim 2^n$, and by a surprisingly hard one that $f(n,n/4) = 2$ except for $n=4,8$, where  $f(4,1)=f(8,2)=3$.

Since a two-miltonian graph satisfies $\Delta \leq 4$, the following results will be useful for us:

\begin{thm} \label{7/26}[Locke, Lou] \cite{7/26}
If $G$ is a connected $K_4$-free simple graph satisfying  $\Delta(G) \le 4$, then $\alpha(G) \geq (7n-4)/26 \approx 0.2692n.$ 
\end{thm}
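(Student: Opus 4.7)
The plan is to prove the theorem by induction on $n$, at each step removing a subset $S \subseteq V(G)$ from which an independent set $I_S \subseteq S$ of size at least $7|S|/26$ can be extracted (with $I_S$ independent in $G$ itself, not merely in $G[S]$), and then applying the induction hypothesis to the connected components of $G - S$. The additive slack in the $-4/26$ correction term provides room to absorb constant-sized leftover components, which keeps the base of the induction manageable.

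For small $n$ the bound $(7n-4)/26$ is weak and can be verified by ad hoc arguments. In the inductive step, if $G$ has a vertex $v$ of degree at most $2$, taking $I_S = \{v\}$ and $S = N[v]$ gives ratio $|I_S|/|S| \ge 1/3 > 7/26$ and the induction closes. Otherwise $G$ has minimum degree $\ge 3$. I would pick a vertex $v$ of degree $4$; since $G$ is $K_4$-free, $G[N(v)]$ is a $K_3$-free graph on $4$ vertices and hence contains an independent set of size at least $2$ (by Ramsey, since $R(3,3)=6$). The strategy is to grow a seed independent set $I_0 \subseteq N(v)$ into $I_S$ by carefully adjoining vertices drawn from $N^2(v)$, set $S$ to be the closed $G$-neighborhood of $I_S$, and verify $|I_S|/|S| \ge 7/26$ by case analysis on the isomorphism type of $G[N(v)]$ (independent set, single edge, $2K_2$, $P_3$, $P_4$, or $C_4$) and on the attachment pattern of $N(v)$ to $N^2(v)$. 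The cubic case (where $v$ has degree $3$) is analogous but with smaller neighborhood and a slightly looser required ratio.

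The main obstacle is that the ratio $7/26$ is tight on some extremal local configuration, so each subcase must be analyzed with essentially no slack: one cannot simply pick $|I_S| = 2$ with $|S|$ as large as $8$, for example, so the extension into $N^2(v)$ must always contribute enough independent vertices to preserve the ratio. Identifying the extremal $4$-regular $K_4$-free graphs --- presumably some small, highly symmetric graph on $26$ vertices with independence number exactly $7$ --- would guide the case distinction and pinpoint where the argument must be sharpest. If a purely local extraction does not close all cases, a secondary discharging or weighted-averaging argument (assigning fractional charges to vertices by neighborhood type and redistributing them so that an average configuration has weight at least $7/26$) would likely be needed to sweep up the remaining configurations.
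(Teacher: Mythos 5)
The paper does not prove this statement at all: it is quoted verbatim from Locke and Lou \cite{7/26}. The only ``proof'' the paper offers is the one-line remark following Theorem~\ref{notevenmyfinalform}: from $e - 9n + 26\alpha(G) \geq -4$ (also Locke--Lou) and the trivial bound $e \le 2n$ (since $\Delta(G) \le 4$), one gets $\alpha(G) \geq (9n - 2n - 4)/26 = (7n-4)/26$. Your proposal instead sketches a from-scratch inductive proof, which is a genuinely different route, but it has a real gap.

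The gap is in the induction itself. You propose removing a set $S$ from which you extract an independent set $I_S$ with $|I_S|/|S| \ge 7/26$ and then applying the hypothesis to the components of $G - S$. But the additive correction $-4/26$ is charged \emph{per connected component}: if $G - S$ breaks into $r$ pieces of sizes $n_1,\ldots,n_r$, the induction gives $\alpha(G) \ge |I_S| + \sum_i (7n_i - 4)/26 = 7n/26 - 4r/26 + (|I_S| - 7|S|/26)$, which is at least $(7n-4)/26$ only when $r = 1$ or when the surplus in $I_S$ covers $4(r-1)/26$. Your low-degree step ($v$ of degree $\le 2$, $S = N[v]$, $|I_S|/|S| \ge 1/3$) yields only $(1/3 - 7/26)|S| \le 5/26$ of slack, while removing $N[v]$ can easily disconnect a max-degree-$4$ graph into two or more components, each costing $4/26$. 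The same accounting problem recurs in the degree-$3$ and degree-$4$ cases, where you explicitly say there is ``essentially no slack.'' So the induction as stated does not close; repairing it requires either controlling the number of components created at each deletion step or working with a sharper per-vertex potential (which is essentially what Locke and Lou's $e - 9n + 26\alpha \ge -4$ formulation does --- the edge count $e$ tracks exactly the information your component count is losing). Beyond this, the ``case analysis on $G[N(v)]$ and its attachment pattern'' and the fallback ``discharging argument'' are placeholders, not arguments: this is a plan, not a proof, and it defers precisely the hard combinatorial work that occupies Locke and Lou's paper.

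Separately, note that the paper actually needs the stronger inequality of Theorem~\ref{notevenmyfinalform} (it is used directly in Lemma~\ref{quality_indep}, where the edge count matters), so even a correct direct proof of the $7/26$ bound would not suffice for the paper's purposes.
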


Theorem~\ref{7/26} points towards the importance of $K_4$ subgraphs when $c$ is near $1/4$.

\begin{defi}
Given a two-miltonian graph $G$ we write $\zeta(G)$ for the number of copies of $K_4$s in $G$.
If $\zeta(G)=n/4$ (namely if the vertices of $G$ can be covered by  $K_4$s) then we say that $G$ is
{\em $K_4$-covered}.
\end{defi}

The most useful tool used in this paper is the following rather technical lemma.


\begin{lem} \label{technical}
Let $G$ be a two-miltonian graph on $n>13$ vertices. Let $G'$ be
obtained from  $G$ by removing all vertices in all copies of $K_4$.
Then there exists a graph $H$ with $V(H)=V(G')$ and $E(G') \subseteq
E(H)$, satisfying:
\begin{enumerate}
\item $H$ is connected. \label{connected}
\item $H$ is $K_4$-free.
\item $d_H(v) \leq d_G(v)$  for every vertex $v \in V(H)$,  with strict inequality at least for one vertex $v$ if $G$ is not $K_4$-free.
\item For every independent set $I$ of $H$ there exists a set $J$ consisting of a choice of one vertex from each $K_4$ in $G$, such that $I \cup J$
 is  independent in $G$. \label{strong}
\end{enumerate}
\end{lem}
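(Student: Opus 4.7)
My plan is to exploit the rigid structure of $K_4$s in a two-miltonian graph $G = C_1 \cup C_2$. First I would establish that every $K_4$ occupies four consecutive vertices on each of $C_1$ and $C_2$: since each Hamilton cycle can contribute at most three edges among any four vertices (when $n \geq 5$), a $K_4$ must place exactly three edges on each cycle, forcing its vertices to appear consecutively along $C_1$, say in order $a, b, c, d$, and consecutively along $C_2$ in the complementary order $c, a, d, b$ (so as to supply the missing edges $ac, ad, bd$). In particular, each $K_4$-vertex has exactly one neighbor outside its $K_4$: $a$ and $d$ via $C_1$, and $c$ and $b$ via $C_2$. A short case analysis using $n > 13$ then shows that two $K_4$s with a shared vertex would force one of $C_1, C_2$ to have length at most $13$, so the $K_4$s of $G$ are vertex-disjoint.

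Next I define $H$. Group the $K_4$-vertices into maximal consecutive blocks along $C_1$ (call these the \emph{$C_1$-runs}) and, for each run with bordering non-$K_4$ vertices $p, q \in V(G')$, add the bypass edge $pq$ to $G'$; the resulting $H$ contains the contracted Hamilton cycle $\tilde C_1$ on $V(G')$, so property~1 (connectivity) is immediate. For property~3, a vertex $v \in V(H)$ retains its two $\tilde C_1$-neighbors plus only those $C_2$-neighbors that lie in $V(G')$, giving $d_H(v) \leq 4 = d_G(v)$, with strict inequality at any $v \in V(G')$ having a $C_2$-neighbor inside some $K_4$; such a $v$ exists whenever $G$ has a $K_4$ and $V(G') \neq \emptyset$, as otherwise every $C_2$-edge from a $K_4$-vertex would land in another $K_4$-vertex, making $V(G)$ covered by $K_4$s and $V(G') = \emptyset$. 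For property~2 ($K_4$-freeness), any $K_4$ in $H$ must use at least one bypass edge (since no $K_4$ of $G$ survives in $V(G')$), and I would analyze the possible configurations of the five remaining edges in $G'$ around a given bypass using the data from Step~1; each configuration will force one of the Hamilton cycles to be short, contradicting $n > 13$.

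For property~4 I build $J$ one $C_1$-run at a time. Fix a run $K_{i_1}, \ldots, K_{i_r}$ bordered by $p, q$. Since $pq \in E(H)$ and $I$ is independent in $H$, at most one of $p, q$ lies in $I$. If $q \notin I$, select for each $K_{i_k}$ its first-in-$C_1$ vertex $a_k$: the unique external neighbor of $a_k$ is either in the preceding $K_4$ (hence in $V(G) \setminus V(G')$ and not in $I$) or equals $p$ (when $k = 1$); if $p \in I$ as well, I fall back to selecting the last-in-$C_1$ vertex of each $K_{i_k}$, whose external neighbors are analogously either in the next $K_4$ or equal $q \notin I$. By Step~1, the first- and last-in-$C_1$ vertices of any $K_4$ have both of their $C_2$-neighbors inside their own $K_4$, so the chosen representatives have no $C_2$-edges to $I$ nor to one another; their pairwise $C_1$-distances are at least $4$, ruling out $C_1$-adjacencies. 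A symmetric argument covers $p \notin I$, $q \in I$; the case $p, q \in I$ is excluded by the bypass edge.

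The principal obstacle is the $K_4$-freeness check in Step~3: a bypass edge can pass very near existing $C_2$-edges and threaten to complete a new $K_4$ around the skipped run, and ruling this out is precisely where the bound $n > 13$ is crucial. The coordination of representatives in Step~4 across runs whose external $C_2$-neighbors may coincide is the other technically fussy piece, but it reduces to bookkeeping once the structural picture of Step~1 is in hand.
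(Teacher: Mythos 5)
Your Step~1 (the rigid structure of a $K_4$ in a two-miltonian graph: four consecutive vertices on each cycle, with the two ``$C_1$-ends'' having their unique external edge in $C_1$ and the two ``$C_2$-ends'' in $C_2$) is correct and is a nice way to organize the combinatorics. The $J$-selection in Step~4 is also a clean idea. But the proposal breaks at the $K_4$-freeness claim in Step~3, and the failure is not a small oversight --- the claim that ``each configuration will force one of the Hamilton cycles to be short, contradicting $n>13$'' is simply false, because the obstruction to adding the $C_1$-bypass edge need not be localized near the run along $C_1$: it can come entirely from $C_2$-edges, and these impose no length constraint.

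Here is a concrete counterexample on $14>13$ vertices. Take $C_1=(1,2,\dots,14)$ and
$C_2=(7,1,8,2,10,12,14,5,3,6,4,13,11,9)$. Both are Hamiltonian cycles, and one checks that $\{3,4,5,6\}$ is the unique $K_4$ of $G=C_1\cup C_2$ (it is the only $4$-set consecutive on both cycles with disjoint edge contributions). The $C_1$-run is exactly $\{3,4,5,6\}$, bordered by $p=2$ and $q=7$, so your construction adds the bypass edge $\{2,7\}$. But $\{1,2\},\{7,8\}\in C_1$ and $\{1,7\},\{1,8\},\{2,8\}\in C_2$ are already present in $G'$, so $\{1,2,7,8\}$ becomes a $K_4$ in $H$. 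So your $H$ is not $K_4$-free. Note that the neighborhood of the $K_4$ here is $\{2,7,13,14\}$, which is \emph{not} independent (since $\{13,14\}\in C_1$); the paper's proof exploits precisely this dichotomy: when $N(K)$ is non-independent one removes the archipelago without adding any edge (connectivity and the $J$-extension being recovered by other means, namely Claim~\ref{tooeasy} and a separate connectedness step), and only when $N(K)$ is independent does one add an edge --- and then one is free to pick the pair carefully, not always the $C_1$-borders. Your construction, by contrast, always adds the $C_1$-bypass, and this is too rigid: there is no choice available when $\{p,q,x,y\}$ threatens to close a $K_4$, and no length bound to appeal to.

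Fixing this would force you essentially into the paper's case analysis: you would have to decide, run by run, whether to add an edge at all (not adding one when the surroundings are non-independent), and when you do add one, to allow choices other than the $C_1$-borders. But then Step~4 also needs rework, since your argument that at most one of $p,q$ lies in $I$ hinges on the bypass edge being present; in the no-edge case you would need the paper's Claim~\ref{tooeasy}-type argument that a non-independent neighborhood already forces $I$ to miss a vertex adjacent to the run. In short, the structural picture of Step~1 is a genuine contribution and the overall architecture is plausible, but the proof as proposed does not go through, and the ``principal obstacle'' you flag is in fact fatal as stated rather than merely technical.
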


Intuitively Lemma \ref{technical} states that if $G$ is two miltonian, we can use theorem \ref{7/26} on the $K_4$-free part of $G$ to obtain a large independent set and we can further enlarge it by adding a vertex from each $K_4$ maintaining independence. The authors feel that in Lemma \ref{technical} the assumption that $G$ is two-miltonian can be replaced by different assumptions, see Remark \ref{two-miltonian}. This Lemma is the core of the argument for the lower bound in Theorem \ref{bounds} and in the proof of the following theorem.

\begin{thm} \label{structural}
Let $G$ be a two-miltonian graph  on $n>12$ vertices. Then $\alpha(G)
=\frac{n}{4}$ if and only if $G$ is  $K_4$-covered.
\end{thm}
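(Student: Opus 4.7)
The plan is to prove the two directions of the equivalence separately.

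For the easy direction ($\Leftarrow$), assume $G$ is $K_4$-covered. Since $K_4$-subgraphs in a two-miltonian graph are pairwise vertex-disjoint (a shared vertex would force some degree above $4$, and the residual case of three shared vertices is ruled out by $n>12$ via a short Hamilton-cycle argument), a $K_4$-cover partitions $V(G)$ into $n/4$ cliques of size four, giving $\alpha(G) \le n/4$. Conversely, $G$ is connected with $\Delta(G) \le 4$, and since $n > 12$ it is not $K_5$, so Brooks' theorem yields $\chi(G) \le 4$ and hence $\alpha(G) \ge n/4$.

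For the hard direction ($\Rightarrow$), assume $\alpha(G) = n/4$. Since $\alpha(G)$ is an integer, $4 \mid n$, so $n \ge 16 > 13$ and Lemma~\ref{technical} applies. Let $H$ be the graph it produces, set $k := \zeta(G)$ and $n' := |V(H)| = n - 4k$. Property~(\ref{strong}) gives $\alpha(G) \ge \alpha(H) + k$, so $\alpha(H) \le n/4 - k = n'/4$. On the other hand, $H$ is connected and $K_4$-free with $\Delta(H) \le 4$, so Theorem~\ref{7/26} delivers $\alpha(H) \ge (7n'-4)/26$. Combining these inequalities forces $n' \le 8$, and $4 \mid n'$ leaves $n' \in \{0,4,8\}$.

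If $n' = 0$ then $G$ is $K_4$-covered, as desired. If $n' = 4$ then $H$ is $K_4$-free on four vertices, so $\alpha(H) \ge 2 > 1 = n'/4$, a contradiction. In the subcase $n' = 8$, $n \ge 16$ forces $k \ge 2$, so $G$ is not $K_4$-free and the strict-inequality clause of property~(3) in Lemma~\ref{technical} yields a vertex $v$ with $d_H(v) \le 3$, equivalently $d_{\bar H}(v) \ge 4$. Since $\alpha(H) \le 2$, the complement $\bar H$ is triangle-free, so the at-least-four neighbors of $v$ in $\bar H$ are pairwise non-adjacent in $\bar H$ and thus form a clique of size at least $4$ in $H$, contradicting $K_4$-freeness.

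The main obstacle is the case $n' = 8$: Theorem~\ref{7/26} is just barely tight there (for instance at the $4$-regular complement $\overline{W_8}$ of the Wagner graph, which is connected, $K_4$-free and has $\alpha = 2$), so the needed contradiction has to be extracted from the strict-inequality clause in property~(3) combined with the triangle-free structure of $\bar H$.
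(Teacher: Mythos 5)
Your proof is correct, but the hard direction takes a genuinely different route from the paper's. The paper (implicitly, via the argument embedded in the proof of Theorem~\ref{values}) first rules out $K_4$-freeness with Theorem~\ref{7/26}, then, assuming $G$ has a $K_4$ but is not $K_4$-covered, applies Lemma~\ref{technical} to get a non-empty $K_4$-free $H$ with a vertex of degree at most $3$, and cites Theorem~\ref{stoneage} (Albertson--Bollob\'as--Tucker) to conclude $\alpha(H) > |V(H)|/4$; property~(\ref{strong}) of Lemma~\ref{technical} then lifts this to $\alpha(G) > n/4$. You instead apply Theorem~\ref{7/26} to $H$ itself and combine it with $\alpha(H) \le |V(H)|/4$ to force $|V(H)| \le 8$, then dispose of the two surviving cases $|V(H)| \in \{4,8\}$ by hand. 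The $|V(H)| = 8$ case is precisely where the paper would invoke Theorem~\ref{stoneage}; you reprove that conclusion for this one size directly, observing that $\alpha(H) \le 2$ makes $\bar H$ triangle-free and that the non-$4$-regularity from the strict-inequality clause of property~(3) yields a vertex of $\bar H$-degree at least $4$ whose $\bar H$-neighborhood would then be a $K_4$ in $H$ --- in effect a hands-on verification of the Ramsey fact $R(3,4)=9$ restricted to the non-$4$-regular case. What each approach buys: the paper's is shorter given the citation; yours eliminates the dependence on Theorem~\ref{stoneage} entirely at the cost of an explicit small-case analysis. Both use Brooks' theorem for the easy direction in the same way.
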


 Theorem \ref{structural} is sharp in the following sense: for $n=8,12$ there exist two-miltonian graphs with  $\alpha=n/4$ and $\zeta=n/4-1$. For general, not necessarily two-miltonian but  $\Delta(G) \leq 4$ graphs, the statement of Lemma \ref{technical}  and Theorem \ref{structural} are false. There exist non two-miltonian graphs on arbitrarily large ground sets with $\alpha=n/4$ and $\zeta \le n/8$, see Figure \ref{counterexample}.

\begin{figure}[htbp,scale=0.5]
\begin{center}
\begin{tikzpicture}

\begin{scope}[shift={(0,0)}]
\filldraw[black] (0,0) circle (2pt); 
\filldraw[black] (0,1) circle (2pt);
\filldraw[black] (1,0) circle (2pt);
\filldraw[black] (1,1) circle (2pt);
\draw (0,0) -- (0,1); 
\draw (0,0) -- (1,0);
\draw (0,0) -- (1,1);
\draw (0,1) -- (1,0);
\draw (0,1) -- (1,1);
\draw (1,0) -- (1,1);
\filldraw[black] (0,-1.5) circle (2pt);
\filldraw[black] (1,-1.5) circle (2pt);
\draw (0,0) to[out=200,in=150] (0,-1.5);
\draw (0,1) to[out=200,in=150] (0,-1.5);
\draw (1,0) to[out=-20,in=20] (1,-1.5);
\draw (1,1) to[out=-20,in=20] (1,-1.5);
\filldraw[black] (0,-2.5) circle (2pt); 
\filldraw[black] (1,-2.5) circle (2pt);
\draw (0,-1.5) -- (0,-2.5); 
\draw (0,-1.5) -- (1,-2.5);
\draw (1,-1.5) -- (0,-2.5);
\draw (1,-1.5) -- (1,-2.5);
\draw (1,-2.5) -- (0,-2.5);
\end{scope}

\begin{scope}[shift={(3,0)}]
\filldraw[black] (0,0) circle (2pt); 
\filldraw[black] (0,1) circle (2pt);
\filldraw[black] (1,0) circle (2pt);
\filldraw[black] (1,1) circle (2pt);
\draw (0,0) -- (0,1); 
\draw (0,0) -- (1,0);
\draw (0,0) -- (1,1);
\draw (0,1) -- (1,0);
\draw (0,1) -- (1,1);
\draw (1,0) -- (1,1);
\filldraw[black] (0,-1.5) circle (2pt);
\filldraw[black] (1,-1.5) circle (2pt);
\draw (0,0) to[out=200,in=150] (0,-1.5);
\draw (0,1) to[out=200,in=150] (0,-1.5);
\draw (1,0) to[out=-20,in=20] (1,-1.5);
\draw (1,1) to[out=-20,in=20] (1,-1.5);
\filldraw[black] (0,-2.5) circle (2pt); 
\filldraw[black] (1,-2.5) circle (2pt);
\draw (0,-1.5) -- (0,-2.5); 
\draw (0,-1.5) -- (1,-2.5);
\draw (1,-1.5) -- (0,-2.5);
\draw (1,-1.5) -- (1,-2.5);
\draw (1,-2.5) -- (0,-2.5);
\end{scope}

\begin{scope}[shift={(6,0)}]
\filldraw[black] (0,0) circle (2pt); 
\filldraw[black] (0,1) circle (2pt);
\filldraw[black] (1,0) circle (2pt);
\filldraw[black] (1,1) circle (2pt);
\draw (0,0) -- (0,1); 
\draw (0,0) -- (1,0);
\draw (0,0) -- (1,1);
\draw (0,1) -- (1,0);
\draw (0,1) -- (1,1);
\draw (1,0) -- (1,1);
\filldraw[black] (0,-1.5) circle (2pt);
\filldraw[black] (1,-1.5) circle (2pt);
\draw (0,0) to[out=200,in=150] (0,-1.5);
\draw (0,1) to[out=200,in=150] (0,-1.5);
\draw (1,0) to[out=-20,in=20] (1,-1.5);
\draw (1,1) to[out=-20,in=20] (1,-1.5);
\filldraw[black] (0,-2.5) circle (2pt); 
\filldraw[black] (1,-2.5) circle (2pt);
\draw (0,-1.5) -- (0,-2.5); 
\draw (0,-1.5) -- (1,-2.5);
\draw (1,-1.5) -- (0,-2.5);
\draw (1,-1.5) -- (1,-2.5);
\draw (1,-2.5) -- (0,-2.5);
\end{scope}

\draw (-1,-2.6) to[out=0,in=200] (0,-2.5);
\draw (1,-2.5) to[out=-20,in=200] (3,-2.5);
\draw (4,-2.5) to[out=-20,in=200] (6,-2.5);
\draw (7,-2.5) to[out=-20,in=180] (8,-2.6);

\end{tikzpicture}
\end{center}

\caption{The strip closes on itself. This is a connected graph that is not the union of two Hamiltonian cycles and it has independence number $n/4$ while only half of its vertices can be covered by $K_4$s. }
\label{counterexample}
\end{figure}
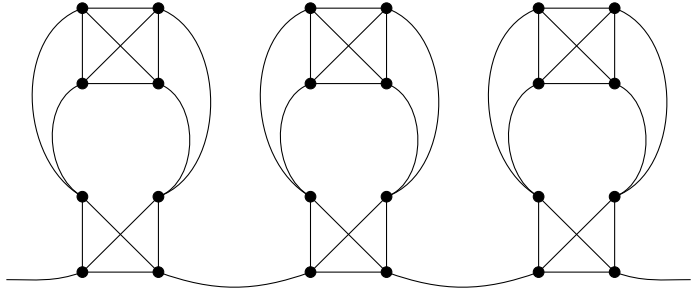

The paper is organized as follows. In Section \ref{sec:threshold} we prove the threshold phenomenon in the behavior of $f(n,cn)$, and using probabilistic arguments  we prove upper bounds on the threshold value $c_t$. In Section \ref{sec:k4free} we prove Lemma \ref{technical}. In  Section \ref{sec:nover4} we calculate $f(n,n/4)$ for all $n$.  In Section \ref{sec:lowerbounds} we prove lower bounds on $c_t$.



\section{A threshold phenomenon}\label{sec:threshold}



In this section we  prove Theorem \ref{threshold}.
The core of the proof is the following lemma:

\begin{lem} \label{semi-random}
Let $\varepsilon>0$ and $n_0,c_0,k_0$ be constants. If $f(n_0, c_0n_0) \ge k_0$ then the function
$f\left(n,\left(\frac{1}{k_0}\frac{1}{2}+\frac{k_0-1}{k_0}c_0+\frac{1}{2n_0}
+\varepsilon \right)n \right)$ grows exponentially in $n$.
\end{lem}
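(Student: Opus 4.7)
The plan is to build Hamiltonian cycles on $[n]$ by blowing up the witnessing family on $[n_0]$, and then to extract an exponentially large good sub-family by a probabilistic alteration step. Let $\cH_0 = \{H_1, \ldots, H_{k_0}\}$ be a family of Hamiltonian cycles on $[n_0]$ with $\alpha(H_i \cup H_{i'}) \le c_0 n_0$ for $i \ne i'$, and assume $n_0 \mid n$. Partition $[n]$ into $m = n/n_0$ consecutive blocks $B_1, \ldots, B_m$, each identified with $[n_0]$; for each $j \in [k_0]$ fix an edge $e_j \in H_j$ and put $P_j = H_j \setminus \{e_j\}$. For a sequence $\mathbf j = (j_1, \ldots, j_m) \in [k_0]^m$, form a Hamiltonian cycle $C_{\mathbf j}$ on $[n]$ by placing $P_{j_i}$ on $B_i$ and joining the endpoints of consecutive paths cyclically via bridge edges; this yields $k_0^m$ distinct Hamiltonian cycles.

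For $\mathbf j \ne \mathbf j'$, set $s = s(\mathbf j, \mathbf j') = |\{i : j_i = j'_i\}|$. Restricting an independent set of $C_{\mathbf j} \cup C_{\mathbf j'}$ to $B_i$ produces an independent set of $P_{j_i} \cup P_{j'_i}$: when $j_i = j'_i$ this union is a Hamiltonian path, contributing at most $\lceil n_0/2 \rceil$; when $j_i \ne j'_i$ its edge set is obtained from $H_{j_i} \cup H_{j'_i}$ by removing at most two edges, so it contributes at most $c_0 n_0 + O(1)$. Summing over blocks gives the block decomposition
\[
\alpha(C_{\mathbf j} \cup C_{\mathbf j'}) \;\le\; s \cdot \tfrac{n_0+1}{2} \;+\; (m-s)\, c_0\, n_0 \;+\; O(m).
\]

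To control $s$, I would sample $N = \lfloor e^{cm/3} \rfloor$ sequences independently and uniformly from $[k_0]^m$: for a uniformly random pair, $s \sim \operatorname{Bin}(m, 1/k_0)$, so Chernoff supplies a constant $c = c(\varepsilon', k_0) > 0$ with $\Pr[s \ge (1/k_0 + \varepsilon')m] \le e^{-cm}$. The expected number of pairs with $s > (1/k_0 + \varepsilon')m$ is then at most $\binom{N}{2} e^{-cm} = o(N)$, so deleting one sequence per such pair leaves an exponentially large sub-family in which every pair satisfies $s \le (1/k_0 + \varepsilon')m$. Plugging this into the block bound and regrouping terms yields
\[
\alpha(C_{\mathbf j} \cup C_{\mathbf j'}) \;\le\; n\!\left[\tfrac{1}{2k_0} + \tfrac{k_0-1}{k_0}\, c_0 + \tfrac{1}{2n_0} + \varepsilon\right],
\]
once $\varepsilon'$ is chosen small enough that the residual $\varepsilon'$-terms fit inside the $\varepsilon$ slack.

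The main obstacle is that worst-case pairs of sequences can agree on as many as $m-1$ blocks, which would push $\alpha(C_{\mathbf j} \cup C_{\mathbf j'})$ to order $n/2$ and defeat a purely deterministic enumeration; the probabilistic/alteration step is exactly what removes such adversarial pairs while preserving an exponential family, exploiting the sharp concentration of $s$ around its mean $m/k_0$. A smaller technical point is the $O(1)$ loss per disagreement block incurred when passing from the cycle union $H_{j_i} \cup H_{j'_i}$ to the path union $P_{j_i} \cup P_{j'_i}$; this totals $O(m) = O(n/n_0)$, of the same order as the $\tfrac{1}{2n_0}$ summand in the stated bound and therefore absorbed there.
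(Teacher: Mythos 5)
Your overall strategy (partition $[n]$ into $n_0$-blocks, place paths from the witnessing family on blocks, stitch them into a Hamiltonian cycle, then use Chernoff concentration of the agreement count to extract an exponential sub-family) is the same as the paper's. However, there is a genuine quantitative gap in the stitching step. You cut each $H_j$ at a fixed edge $e_j$ whose endpoints depend on $j$; in a disagreement block the two removed edges are essentially arbitrary, so you only get $\alpha(P_{j_i}\cup P_{j'_i})\le c_0 n_0+2$, contributing $+2$ per disagreement block and hence $+2(m-s)\approx 2m(1-1/k_0)$ overall. Dividing by $n=mn_0$, the residual term is roughly $\tfrac{2}{n_0}(1-\tfrac{1}{k_0})$, which for large $k_0$ is close to $\tfrac{2}{n_0}$ --- four times the $\tfrac{1}{2n_0}$ allowed by the statement. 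Since $n_0$ and $\varepsilon$ are fixed parameters of the lemma, a term of order $\tfrac{2}{n_0}$ is not ``of the same order and therefore absorbed'' in $\tfrac{1}{2n_0}+\varepsilon$: it can strictly exceed it, so the displayed final bound does not actually follow from your block estimate.

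The paper's proof avoids this by a more careful edge-deletion: it fixes one vertex $v_a$ in each block and, for each cycle, deletes an edge of that block's $H_{j_a}$ \emph{incident with $v_a$}. Then for every pair of constructed Hamiltonian cycles, the restriction to $B_a\setminus\{v_a\}$ is an induced subgraph of $H_{j_a}\cup H_{j'_a}$, so no $+2$ is lost per block --- only $v_a$ itself might be gained. Moreover, it places a \emph{fixed} perfect matching on the vertices $v_a$ (the same matching used in every constructed cycle), which forces any common independent set to contain at most $m/2$ of the $v_a$'s, giving exactly the $+N/2$ and hence the $\tfrac{1}{2n_0}$ term. To repair your proof you would need both devices: anchor all deleted edges at a common vertex per block (so the endpoints of $P_j$ always include that vertex), and use a fixed matching among these anchor vertices as bridge edges shared by all your cycles.
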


The proof will use a standard concentration result:

\begin{lem}\label{average}
If the elements of two sequences $\sigma, \tau$ of length $N$ are chosen at random from a set of size $k$ then
$$Pr \left( |\{a: \sigma(a) = \tau(a)\}| > \frac{N}{k}+\varepsilon \right) < \exp(-2 \varepsilon N).$$

\end{lem}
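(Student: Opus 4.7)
The plan is to realize $X := |\{a : \sigma(a)=\tau(a)\}|$ as a sum of independent Bernoulli indicators and then invoke a standard Chernoff--Hoeffding tail bound. First I would set $X_a = \mathbf{1}[\sigma(a)=\tau(a)]$ for $a=1,\dots,N$. Since each pair $(\sigma(a),\tau(a))$ is drawn independently and uniformly from a set of size $k$ in each coordinate, we have $\Pr(X_a=1)=1/k$, and the family $\{X_a\}_{a=1}^{N}$ is mutually independent because the entries in distinct positions of $\sigma$ and $\tau$ are independent. Consequently $X=\sum_{a=1}^N X_a$ has expectation $N/k$ and is a sum of $N$ independent $[0,1]$-valued random variables.

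The key step is then a direct application of Hoeffding's inequality in the form
\[
\Pr\!\bigl(X - \mathbb{E}[X] > t\bigr) \;\le\; \exp\!\left(-\tfrac{2t^2}{N}\right)
\]
valid for every $t>0$. Plugging in the deviation corresponding to $\varepsilon$ and simplifying yields the exponential tail bound asserted in the statement. (One could alternatively use the multiplicative Chernoff bound applied to the $\mathrm{Binomial}(N,1/k)$-distributed variable $X$, but the additive Hoeffding version gives the cleanest exponent.)

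There is no substantive obstacle: the lemma is essentially a named instance of Hoeffding's inequality, and the only points that deserve attention are verifying the independence of the $X_a$, which is immediate from the coordinatewise independence of the two random sequences, and matching the exponent of the stated bound with the one produced by the inequality. The entire argument can be carried out in a few lines once the indicator decomposition is written down.
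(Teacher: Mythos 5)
Your approach is the same one the paper takes: decompose $X := |\{a : \sigma(a)=\tau(a)\}|$ into a sum of independent indicators, compute $\mathbb{E}[X] = N/k$, and invoke a Chernoff/Hoeffding tail bound. The paper literally says only ``the result now follows by the Chernoff inequality'' and cites Alon--Spencer, so you are filling in exactly the intended skeleton; identifying the independence of the $X_a$ and the expectation is precisely the content of the paper's one-line proof.

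However, the sentence ``plugging in the deviation corresponding to $\varepsilon$ and simplifying yields the exponential tail bound asserted'' hides a genuine discrepancy that you flag but do not resolve. The lemma's event is $X > \tfrac{N}{k} + \varepsilon$, i.e.\ deviation $t = \varepsilon$, and Hoeffding gives $\exp(-2\varepsilon^2/N)$, which tends to $1$ as $N \to \infty$ and is useless. The downstream application in Lemma \ref{semi-random} actually uses the event $X > \tfrac{N}{k} + \varepsilon N$, so the lemma statement has a typo and the intended deviation is $t = \varepsilon N$; but then Hoeffding gives $\exp(-2\varepsilon^2 N)$, not $\exp(-2\varepsilon N)$. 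Since for $\varepsilon < 1$ one has $\exp(-2\varepsilon^2 N) > \exp(-2\varepsilon N)$, Hoeffding yields a strictly weaker bound than the one claimed, so ``simplifying'' cannot produce the stated constant. The honest conclusion is that the lemma should read $\Pr(X > N/k + \varepsilon N) < \exp(-2\varepsilon^2 N)$ (which is all the paper needs, as any fixed exponential decay in $N$ suffices for the union bound against $m = \exp(\varepsilon N)$ pairs after renaming $\varepsilon$), and a careful proof should say so rather than assert that the stated exponent drops out.
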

\begin{proof} : For given $a$, $Pr(\sigma(a) = \tau(a)) =\frac{1}{k}$, and hence the
expected number of indices in which $\sigma$ and $\tau$ have identical elements is $\frac{N}{k}$. The result now follows by the Chernoff inequality (see, e.g.,    \cite{alonspencer}).
\end{proof}

\begin{proof}
[Proof of Lemma \ref{semi-random}]

Let $S_1, \ldots ,S_N$ be disjoint copies of a set of size $n_0$,
where $N$ is an even number to be specified below. Let $V=\bigcup_{i
\le N}S_i$, and write  $n=|V|=Nn_0$. An {\em $N$-chain} is an
$N$-tuple of cycles  ${\D}=(C_{i_1}, \ldots ,C_{i_N})$, where
$C_{i_a} \in \C$ is a Hamiltonian cycle chosen from $\C$ on $S_a$.

 Let $m=\exp( \varepsilon N)= \exp(\varepsilon n/n_0)$. Choose  $m$ $N$-chains $\D^1, \ldots ,\D^m$,
  forming each $\D^h, ~h \le m$ by choosing a cycle
$C^h_i~~(i\le N)$ in each $\D^h$ at random from $\C$,
 uniformly  and independently. By Lemma \ref{average}  the
probability that  there
exists a pair $\D^j, \D^h$ for which $|\{a \mid C^h_{i_a}=C^j_{i_a}\}| > \frac{N}{k_0}+\varepsilon $ is smaller than
$\binom{m}{2} \exp(-2 \varepsilon N)$, which is less than $1$. Thus for every $N$ there exist $\exp( \varepsilon N)$ $N$-chains $\D^j$, such that
 $|\{a \mid C^h_{i_a}=C^j_{i_a}\}| \le \frac{N}{k_0}+\varepsilon $  whenever $j \neq h$.
Writing $F_j=\bigcup \D_j$, we then have, for every pair $j, h \le m$:

$$\alpha(F_j \cup F_h) \le (N/k_0+\varepsilon N)n_0/2 + (N(k_0-1)/k_0 - \varepsilon N)c_0n_0. $$

Here the first term comes from the cycles for indices $a$ for which $C^j_{i_a}=C^h_{i_a}$. The second term comes from the other cycles, applying the assumption of the theorem, that    $\alpha(C_a \cup C_b) \le c_0n_0$ whenever $a<b\le k_0$.

The next step is to turn each $F_j$ into a Hamiltonian cycle. Pick a vertex $v_a$ in each copy $S_a$ of $S$,
  and for each $j$  delete an edge of $F_j$ incident with $v_a$. This changes $F_j$  into the union of $N$ paths,
  each having a vertex $v_a$ as one of its endpoints.
Put a matching arbitrarily on the  vertices $v_a$ (this is where we are using the fact that $N$ is even), thus making $F_j$ to be the union $F'_j$ of $N/2$ disjoint paths. Now form a Hamiltonian cycle $B_j$  by adding $N/2$ new edges, chosen arbitrarily, to  $F'_j$.

Since the vertices $v_a$ are connected by a matching, for every pair $(j,h)$ of indices an independent set in $B_j \cup B_h$
 contains at most $\frac{N}{2}$ vertices $v_a$, and hence

 $$\alpha(B_j \cup B_h)\le \sum_{a \le N}\alpha(C^j_{i_a} \cup C^h_{i_a})+\frac{N}{2}\le $$

$$(N/k_0+\varepsilon N)n_0/2 + (N(k_0-1)/k_0 - \varepsilon N)c_0n_0 + N/2 $$
yielding the independence ratio
$$\frac{(N/k_0+\varepsilon N)n_0/2 + (N(k_0-1)/k_0 - \varepsilon N)c_0n_0 + N/2}{n}= $$
$$\left(\frac{1}{k_0}+\varepsilon \right) \frac{1}{2}+\left(\frac{k_0-1}{k_0}-\varepsilon \right)c_0+\frac{1}{2n_0}\leq \frac{1}{k_0}\frac{1}{2}+\frac{k_0-1}{k_0}c_0+\frac{1}{2n_0} +\varepsilon. $$

This proves the existence of exponentially large systems of Hamiltonian cycles with the appropriate size of independent sets in each union of two Hamiltonian cycles, for ground sets divisible by $2n_0$. The lemma for  ground sets of general size follows directly.
\end{proof}

To deduce Theorem \ref{threshold} from Lemma~\ref{semi-random}, let us first re-formulate the theorem to an equivalent form:

\begin{thm} \label{sharp}(re-formulated)~~
If $\limsup_{n \rightarrow \infty} f(n,c_0n) = \infty$  then
 for every $\varepsilon >0$ there exists  $\gamma=\gamma(\varepsilon) >1$ such that for large enough $n$ we have:
$$ f(n,(c_0+\varepsilon)n) > \gamma^n. $$
\end{thm}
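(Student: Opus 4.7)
The plan is to deduce Theorem \ref{sharp} directly from Lemma \ref{semi-random} by a choice-of-parameters argument. The hypothesis $\limsup_{n\to\infty} f(n,c_0 n) = \infty$ unpacks into the existence of a sequence $n_j \to \infty$ along which $f(n_j, c_0 n_j) \to \infty$. Consequently, for any prescribed thresholds $M$ and $K$ we may choose an index $n_0 \geq M$ with $k_0 := f(n_0, c_0 n_0) \geq K$; this is the flexibility we will use.

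Next, I would rewrite the coefficient appearing in Lemma \ref{semi-random} in a form that exposes what tends to zero:
$$\frac{1}{k_0}\cdot\frac{1}{2} + \frac{k_0-1}{k_0}c_0 + \frac{1}{2n_0} \;=\; c_0 + \frac{1}{k_0}\!\left(\frac{1}{2} - c_0\right) + \frac{1}{2n_0}.$$
One may assume $c_0 < 1/2$ since $\alpha(H_1\cup H_2) \leq n/2$ trivially, and then both of the ``extra'' terms beyond $c_0$ vanish as $n_0,k_0 \to \infty$. Given $\varepsilon > 0$, I would choose $n_0$ from the witnessing sequence so large that $\frac{1}{2n_0} < \varepsilon/4$ and simultaneously $k_0 = f(n_0, c_0 n_0)$ is so large that $\frac{1}{k_0}(\tfrac12 - c_0) < \varepsilon/4$; both can be arranged at once because both tendencies occur along the same witness sequence. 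Invoking Lemma \ref{semi-random} with these $n_0, k_0$ and with its internal parameter $\varepsilon$ set to $\varepsilon/2$ then produces an exponentially growing family of Hamiltonian cycles whose pairwise unions have independence number at most $(c_0+\varepsilon)n$, which is the conclusion of Theorem \ref{sharp}.

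The main obstacle is really only bookkeeping rather than genuine difficulty: Lemma \ref{semi-random} is stated for $n$ of the form $N n_0$ with $N$ even, and the arguments $c_0 n_0, (c_0+\varepsilon)n$ need not be integers. Both are handled by obvious rounding and by interpolating between ``nice'' values of $n$; one absorbs the resulting loss into the flexibility of $\varepsilon$ and slightly reduces the exponential growth constant $\gamma$, and this is exactly what the final sentence of the proof of Lemma \ref{semi-random} already points at. With these routine adjustments the deduction of Theorem \ref{sharp}, and hence of Theorem \ref{threshold}, is complete.
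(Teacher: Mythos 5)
Your proposal is correct and follows essentially the same route as the paper's own proof: both deduce the theorem from Lemma~\ref{semi-random} by using the $\limsup$ hypothesis to choose $n_0$ and $k_0 = f(n_0, c_0 n_0)$ simultaneously large enough that the coefficient $\frac{1}{2k_0} + \frac{k_0-1}{k_0}c_0 + \frac{1}{2n_0}$ plus the lemma's internal $\varepsilon$ falls below $c_0+\varepsilon$. Your rewriting of that coefficient as $c_0 + \frac{1}{k_0}\bigl(\frac{1}{2}-c_0\bigr) + \frac{1}{2n_0}$ and the explicit reduction to $c_0 < 1/2$ are a cleaner presentation of the same bookkeeping the paper compresses into its choice $k_0, n_0 \geq \frac{3}{2\varepsilon}$.
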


\begin{proof}  Let  $k_0 \geq \frac{3}{2\varepsilon}$ and $\varepsilon=\frac{\varepsilon}{3}$. By the assumption there exists
 $n_0 \geq \frac{3}{2\varepsilon}$ for which $f(n_0,c_0n_0) \geq k_0$. For large enough $k_0$ we have  $ \frac{1}{k_0}\frac{1}{2}+\frac{k_0-1}{k_0}c_0+\frac{1}{2n_0} +\varepsilon \leq c_0+\varepsilon $, and thus the
 theorem follows by Lemma \ref{semi-random}.
\end{proof}

Lemma \ref{semi-random} can be used to yield not only the existence of the threshold $c_t$, but also an upper bound. We prove the upper bound in theorem \ref{bounds}.

\begin{claim} \label{bestupper}
$c_t \leq 11/30 \approx 0.3666$
\end{claim}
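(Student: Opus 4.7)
The plan is to invoke Lemma~\ref{semi-random} with a base case $f(n_0,\alpha_0)\ge k_0$ for which the resulting bound
$$\frac{1}{2k_0}+\frac{k_0-1}{k_0}\cdot\frac{\alpha_0}{n_0}+\frac{1}{2n_0}$$
is at most $11/30$. Together with Theorem~\ref{sharp}, this immediately yields $c_t\le 11/30$.

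First, I would search for integer triples $(n_0,k_0,\alpha_0)$ that saturate this bound with value exactly $11/30$. Rewriting the bound as $\bigl(n_0+k_0+2(k_0-1)\alpha_0\bigr)/(2k_0n_0)=11/30$ and solving the resulting Diophantine condition, the simplest natural solution with $k_0\ge 3$ (required since any $k_0=2$ bound exceeds $3/8>11/30$) is $(n_0,k_0,\alpha_0)=(30,4,9)$, which gives
$$\frac{30+4+54}{240}=\frac{88}{240}=\frac{11}{30}.$$
Therefore it suffices to exhibit four Hamiltonian cycles $H_1,H_2,H_3,H_4$ on a $30$-vertex set satisfying $\alpha(H_i\cup H_j)\le 9$ for every $i\ne j$, i.e. $f(30,9)\ge 4$.

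To construct the base case I would exploit the factorization $30=2\cdot 3\cdot 5$. A naive Cayley approach on $\mathbb{Z}_{30}$ fails: the Hamiltonian Cayley cycles are indexed by the units $d\in\{1,7,11,13\}$, and every pairwise union contains the coset $3\mathbb{Z}_{30}$ as an independent set of size $10$, since none of the forbidden differences $\pm 1,\pm 7,\pm 11,\pm 13$ is divisible by~$3$. To break this mod-$3$ symmetry, I would look for zig-zag Hamiltonian cycles on a $6\times 5$ grid-like layout of the vertex set, or a perturbation of the Cayley cycles by local edge swaps near vertices lying in distinct residue classes modulo~$3$.

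The main obstacle is verifying the independence bound $\alpha(H_i\cup H_j)\le 9$ for each of the six pairs. Brooks' theorem forces $\alpha\ge\lceil 30/4\rceil=8$ in any $4$-regular graph on $30$ vertices, so the target $\alpha\le 9$ is only one above the extremal value, and the pairwise unions must be extremely tightly structured. Verification amounts to ruling out every would-be independent set of size $10$; for a carefully chosen cyclic-type construction this can be carried out by residue analysis modulo small divisors of~$30$ together with a short case check, or, if necessary, by a computer-assisted enumeration.
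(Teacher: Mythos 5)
Your plan is the same in spirit as the paper's — invoke Lemma~\ref{semi-random} with a suitable base case and let $\varepsilon\to 0$ — but the base case you propose is different, and you never actually supply it. The paper takes $k_0=5$, $c_0=1/3$, and builds five explicit Hamiltonian Cayley-type cycles on $\mathbb{Z}_{n_0}$ (for $n_0$ odd and divisible by $3$) whose pairwise unions are covered by vertex-disjoint triangles, hence have independence number at most $n_0/3$. Because this works for all such $n_0$, the nuisance term $1/(2n_0)$ can be driven to $0$, and the bound $\tfrac{1}{10}+\tfrac{4}{15}=\tfrac{11}{30}$ falls out. You instead aim for $k_0=4$, $n_0=30$, $\alpha_0=9$, i.e.\ a claim $f(30,9)\ge 4$, which would saturate the lemma at exactly $11/30$ with a single finite $n_0$.

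The genuine gap is that you never exhibit four Hamiltonian cycles on $30$ vertices whose six pairwise unions all have independence number at most $9$. You correctly observe that the Cayley construction on $\mathbb{Z}_{30}$ fails (the residue class $3\mathbb{Z}_{30}$ is an independent $10$-set in every pairwise union), and you correctly note that $\alpha\le 9$ is only one above the Brooks lower bound $\lceil 30/4\rceil=8$, so the unions would have to be very tightly structured. But you then offer only possibilities (``zig-zag cycles,'' ``local edge swaps,'' ``computer-assisted enumeration'') rather than a construction and verification. Since the entire content of Claim~\ref{bestupper} beyond a mechanical application of Lemma~\ref{semi-random} is the existence of the base-case family of cycles, omitting it means the statement is not proved. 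Moreover, it is not obvious that $f(30,9)\ge 4$ even holds — the paper's results (e.g.\ Theorem~\ref{values} gives $f(n,n/4)=2$ for $n\ge 16$, and Lemma~\ref{quality_indep} links small independence number to heavy $K_4$-structure) show that pairwise independence numbers this close to $n/4$ force strong rigidity, so a feasibility argument, not just a search strategy, would be needed. The paper sidesteps this by choosing a looser target $c_0=1/3$ achievable by triangle covers, which is much easier to certify by hand.
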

\begin{proof}
Let $n$ be odd and divisible by $3$. Take as ground set the elements of $\mathbb{Z}_n$ (residue classes modulo $n$). We define the edge sets of two cycles and three forests on $n$ vertices as follows.
$$E(C_1) := \{ (k,k+1) | k \in \mathbb{Z}_n\} \quad E(C_2):= \{(k,k+2) | k \in \mathbb{Z}_n \} $$
$$E(C'_{3}) := \{(3k,3k+2),(3k,3k+4) | k \in \mathbb{Z}_n\} $$
$$E(C'_{4}) := \{(3k+1,3k+3),(3k+1,3k+5) | k \in \mathbb{Z}_n\} $$
$$E(C'_{5}) := \{(3k+2,3k+4),(3k+2,3k+6) | k \in \mathbb{Z}_n\} $$
Connect the connected components of $C'_{3},C'_{4},C'_{5}$ to form Hamiltonian cycles $C_{3},C_{4},C_{5}$ arbitrarily. It is easy to verify that for $0 \leq i < j \leq 5$ the graph $C_i \cup C_j$ can be covered by vertex disjoint triangles, thus it has independence number at most $n/3$. Now we can use Lemma \ref{semi-random} with $k_0=5$, $c_0 = 1/3$ and $n_0$ odd and divisible by three, thus we get that for every $\varepsilon$

$$f\left(n,\left(\frac{1}{k_0}\frac{1}{2}+\frac{k_0-1}{k_0}c_0+\frac{1}{2n_0} +\varepsilon \right)n \right)=f\left(n,\left(\frac{11}{30}+\frac{1}{2n_0} +\varepsilon \right)n \right)$$

is exponentially large in $n$. Since we can choose $n_0$ to be arbitrarily large, we conclude that $c_t \leq 11/30$.
\end{proof}

\section{$K_4$-free graphs}\label{sec:k4free}

A  tool we shall use in two contexts is:

\begin{thm}\label{notevenmyfinalform}[Locke, Lou] \cite{7/26}
Let $G$ be as in the above theorem, and write $e=|E(G)|$. Then

$$e-9n+26\alpha(G) \geq -4.  $$
\end{thm}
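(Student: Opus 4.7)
The plan is to prove the bound by induction on $n$, running the argument in parallel with the proof of Theorem~\ref{7/26} but tracking the edge count $e$ as an extra parameter. Rewritten as
\[
\alpha(G) \;\geq\; \frac{9n - e - 4}{26},
\]
the inequality combines with the trivial bound $e \leq 2n$ (forced by $\Delta(G) \leq 4$) to immediately reproduce Theorem~\ref{7/26}. The point of the strengthening is that each edge missing from the $4$-regular maximum should buy an extra $1/26$ in the independence number.

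First I would verify the small base cases by direct inspection; only a handful of connected $K_4$-free graphs with $\Delta \leq 4$ on small $n$ need to be checked. For the inductive step, select a vertex $v$ of degree $d$, place $v$ into the independent set, and delete $N[v]$. Writing $\Delta_e$ for the number of edges of $G$ incident to $N[v]$ and applying the inductive hypothesis component-wise to $G - N[v]$, a direct computation gives
\[
e - 9n + 26\alpha(G) \;\geq\; -4 + \bigl(17 - 9d + \Delta_e\bigr),
\]
so the induction closes provided $\Delta_e \geq 9d - 17$. This is trivial for $d \leq 2$, and for $d = 3$ it requires only $\Delta_e \geq 10$, which is straightforward unless $v$ and its neighbours form an almost-isolated low-degree cluster that can be handled separately.

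The genuinely hard case is $d = 4$: one needs $\Delta_e \geq 19$, whereas $\Delta_e$ is at most about $20$ and drops rapidly as $N(v)$ acquires internal edges. Here the $K_4$-free hypothesis is essential, forcing $N(v)$ (on four vertices) to be triangle-free and hence to carry at most $4$ edges. My plan is to enumerate the constant-sized list of isomorphism types of $G[N[v]]$. In each type, either the naive $N[v]$-removal already yields $\Delta_e \geq 19$, or I would delete a second vertex $w$ together with $v$, chosen as either an independent neighbour of $v$ or a carefully selected vertex at distance $2$ from $v$. The doubled removal contributes an independent set of size $2$ in exchange for a larger edge decrement, and a routine check confirms that this compensates in every remaining type.

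The main obstacle I expect is precisely this $4$-regular near-tight regime. The constant $-4$ is sharp, so the induction has essentially no slack, and the case analysis must be carried out carefully to ensure that the right number of edges disappears at each deletion step. Once this is done, the proof assembles in the standard inductive manner.
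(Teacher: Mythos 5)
The paper does not prove this statement: it is quoted from Locke and Lou \cite{7/26}, whose own argument occupies roughly twenty-five journal pages of careful reductions and case analysis. There is therefore no in-paper proof to compare against, so I assess your sketch on its own merits.

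The outline has the right flavour, but the single-vertex induction you propose is not merely tight at $d=4$ — it is structurally impossible there. If $v$ has degree $4$ and $G[N(v)]$ contains $y$ internal edges, then since each of the five vertices of $N[v]$ has degree at most $4$ the number of edges incident to $N[v]$ is exactly $16-y\le 16$, whereas your closure condition demands $\Delta_e\ge 9\cdot 4-17=19$. Thus in a $4$-regular $K_4$-free graph no choice of deleted vertex ever closes the induction; the $4$-regular case is the whole theorem, not a corner to be mopped up. Your proposed rescue of deleting a second independent vertex $w$ does not obviously save the argument either: for two degree-$4$ vertices with disjoint closed neighborhoods one removes $k=10$ vertices carrying at most $32$ incident edges, while the analogous requirement is $9k-52=38$; overlapping neighborhoods help, but verifying that a suitable overlap always exists is exactly the nontrivial analysis you have not supplied. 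You have also silently assumed that $G-N[v]$ stays connected: if it splits into $c$ components, applying the hypothesis to each costs $-4c$, not $-4$, and your displayed inequality is valid only for $c=1$. To make this approach rigorous you would have to classify which local configurations must appear in every connected $K_4$-free graph with $\Delta\le 4$ and design a deletion tailored to each — in essence, reproduce the lengthy case analysis of \cite{7/26}.
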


\begin{remark}  Theorem \ref{7/26} follows from Theorem \ref{notevenmyfinalform} and the observation that $\Delta(G) \le 4$ implies $e \le 2n$.
 Theorem \ref{notevenmyfinalform} is best possible in the sense that there are infinitely many graphs for which equality is attained. For a characterisation of these graphs, and a slight improvement on the constant $-4$ for other graphs, see \cite{7/26}. By contrast,
it is not known whether Theorem \ref{7/26} is best possible for large $n$.
\end{remark}

Now we prove Lemma \ref{technical}.

\begin{proof}

In the proof below, $G$ will always denote a two-miltonian graph.

\begin{defi}
A connected $K_4$-coverable induced subgraph  of $G$ is called an
{\em archipelago}. An archipelago  is said to be {\em cyclic} if
contains an induced cycle of length at least $4$, and otherwise it
is called {\em acyclic}. The set of edges in an archipelago $K$ that
do not lie in a $K_4$ is denoted by $M(K)$.
\end{defi}

Since $G$ is two-miltonian $\Delta(G)\le 4$,   implying that the
$K_4$s in $K$ are vertex disjoint, and that $M(K)$ is a matching, consisting of edges connecting $K_4$s.
In an acyclic  archipelago the $K_4$s are connected in a tree-like
fashion.

\begin{notation}
The {\em neighborhood} $N(S)$ of a set $S$ of vertices is the set of vertices connected to $S$ and not belonging to $S$ itself.
\end{notation}

In other words, $N(S)$ is the {\em open} version of ``neighborhood''. Since every $K_4$ in $G$ sends out at least $4$ edges, we have:

\begin{claim}\label{4edgesout}
An acyclic archipelago sends at least
four edges to its neighborhood.
\end{claim}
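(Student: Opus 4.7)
The plan is to base everything on the fact that $G$, being two-miltonian, is $4$-regular. Let $K$ be an acyclic archipelago containing $t$ copies of $K_4$. Since each vertex of such a $K_4$ already uses three of its four incident edges inside that $K_4$, it has exactly one neighbor outside. Counting edges leaving the various $K_4$s by their endpoint inside a $K_4$ therefore gives the total $4t$. Each edge of $M(K)$ (running between two $K_4$s of $K$) is counted twice in this total, while each edge from $K$ to $N(K)$ is counted once, so
\[
|E(K, N(K))| \;=\; 4t - 2|M(K)|.
\]

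Next I would confirm the implicit assertion in the setup that acyclicity forces the $K_4$s of $K$, together with the edges of $M(K)$, to form a tree; this fixes $|M(K)| = t-1$. Suppose instead that the auxiliary graph on the $K_4$s contained a cycle through $A_0, A_1, \dots, A_{\ell-1}$ with $\ell \ge 2$ (allowing $\ell = 2$ for two parallel bridges). For each $i$ let $x_i \in A_i$ be the endpoint of the bridge to $A_{i+1}$ and $y_i \in A_i$ the endpoint of the bridge from $A_{i-1}$. Because each of $x_i$, $y_i$ has three neighbors already inside $A_i$, $4$-regularity forbids $x_i = y_i$ (which would push the degree to $5$), and hence $x_i y_i$ is a $K_4$-edge of $A_i$. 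The sequence
\[
y_0,\, x_0,\, y_1,\, x_1,\, \dots,\, y_{\ell-1},\, x_{\ell-1}
\]
then traces a cycle of length $2\ell \ge 4$ in $K$. This cycle is induced: the unique outside neighbor of $x_i$ is $y_{i+1}$ and the unique outside neighbor of $y_i$ is $x_{i-1}$, so no chord can exist. This contradicts the acyclicity of $K$, as desired.

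Combining the two pieces yields $|E(K, N(K))| = 4t - 2(t-1) = 2t+2 \ge 4$, since $t \ge 1$ for any non-empty archipelago. The only step that needs real care is the induced-cycle verification above, where one has to systematically rule out every potential chord; but the observation that each $K_4$-vertex has exactly one external neighbor pins down all cross edges and makes this check essentially automatic.
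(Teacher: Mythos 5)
Your proof is correct and follows the same route the paper intends: each $K_4$-vertex contributes exactly one external edge (giving $4t$ edges leaving the $K_4$s), the tree structure forced by acyclicity gives $|M(K)| = t-1$, and the count $4t - 2(t-1) = 2t+2 \ge 4$ finishes the claim, with your induced-cycle check spelling out a detail the paper leaves implicit. One small caveat: a two-miltonian graph need not be $4$-regular everywhere (the two Hamiltonian cycles can share edges, lowering some degrees to $3$), but a quick count shows that a vertex lying in a $K_4$ must have degree exactly $4$, and that local fact is all your argument actually uses.
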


We shall  remove the $K_4$s from $G$ one archipelago at a time.
 The next observation and claim explain why if the archipelago is cyclic we can plainly remove it, without having to worry about
 (\ref{strong}).

\begin{observation} \label{greedy}
Let $J$ be a connected graph with  $\Delta(J) \le 4$,  and let $I$
be a non-empty independent set in $J$. Then there is an independent
set $I'$ of $J$ containing $I$,  of size at least $|I|+ |V(J)
\setminus N[I]|/4$.
\end{observation}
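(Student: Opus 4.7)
The plan is to prove Observation~\ref{greedy} by a greedy extension argument, where at each stage we exploit the connectivity of $J$ to locate a low-degree vertex in the as-yet-unused region.

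First I would set up the iteration. Put $I' := I$ and $R := V(J) \setminus N[I']$; throughout the procedure we maintain the invariant that $I'$ is an independent set of $J$ and $R$ is the set of vertices neither in $I'$ nor adjacent to $I'$. As long as $R \neq \emptyset$, I claim one can find a vertex $v \in R$ whose degree in the induced subgraph $J[R]$ is at most $3$. Granting this, add $v$ to $I'$ and delete $N_{J[R]}[v]$ from $R$; this costs at most $4$ vertices of $R$ per one vertex gained by $I'$, and the choice $v \in R$ ensures $I' \cup \{v\}$ is still independent. Iterating until $R$ is empty yields
\[
|I'| - |I| \;\ge\; \frac{|V(J) \setminus N[I]|}{4},
\]
which is the claimed bound.

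The heart of the argument is producing, at each stage, a vertex of $J[R]$-degree at most $3$. Here I would use the connectivity of $J$ together with $I \neq \emptyset$: since $V(J) \setminus R$ is non-empty (it contains $I$) and $J$ is connected, every connected component $C$ of $J[R]$ must contain at least one vertex $v$ with a neighbor in $V(J) \setminus R$. That neighbor occupies one of the at most four slots in $N_J(v)$, leaving at most three neighbors of $v$ inside $R$, i.e.\ $d_{J[R]}(v) \le 3$. This argument is robust to the greedy deletions: at any intermediate stage $R$ is only smaller, so $V(J) \setminus R$ still contains $I$ and the same connectivity reasoning applies.

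The main thing to be careful about is therefore not a calculation but making sure the connectivity observation is re-applied correctly after vertices have been removed; once that is clear, the accounting ``one vertex gained per four removed'' is routine. There are no case splits or appeals to Theorem~\ref{7/26}; the statement is, in essence, a sharpening of the trivial greedy bound $|R|/(\Delta+1) = |R|/5$ to $|R|/4$, made possible by the fact that each component of the leftover set $R$ borrows at least one ``missing edge'' from the boundary with $N[I]$.
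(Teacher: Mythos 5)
Your argument is correct and is essentially the same greedy extension the paper uses: at each stage, connectivity of $J$ together with $I \neq \emptyset$ produces a vertex in the undominated region $R$ with a neighbor in $N(I')$, and $\Delta \le 4$ then caps the cost of adding it at four vertices of $R$. You phrase the key step as a degree bound in the induced subgraph $J[R]$ while the paper tracks $|N(I_i)|$ directly, but this is a cosmetic difference, not a different proof.
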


 \begin{proof}
If $N(I) =V(J)$, then  taking $I'=I$ does the job. Otherwise,  by
the assumption of  connectivity, there exists $v_1 \in V(J)
\setminus N(I)$ connected to $N(I)$ by an edge. Let $I_1=I\cup
\{v_1\}$. By the assumption that $\Delta(J) \le 4$ and by the fact
that $v_1$ is connected to $N(I)$, we have $|N(I_1)|\le |N(I)|+4$.
If $N(I_1)=V(J)$ then we can take $I'=I_1$. Otherwise we add to
$I_1$ a vertex $v_2 \in V(J) \setminus N(I_1)$ that is connected to
$N(I_1)$, and continue.

 \end{proof}

\begin{claim} \label{k4cycles}
If $K$ is a cyclic  archipelago, then there exists an independent set $I \subseteq V(K)$ of size $|V(K)|/4$ (namely, $I$ contains
one vertex from each $K_4$) such that $N(I) \subseteq V(K)$.
\end{claim}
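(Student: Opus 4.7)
My plan is to encode the choice of $I$ as a bipartite matching problem and apply Hall's theorem, using the cycle or multi-edge produced by the induced long cycle in $K$. Let $A$ be the multigraph on vertex set $\{T_1,\ldots,T_m\}$ (the $K_4$s of $K$, so $m=|V(K)|/4$) whose edges are the matching edges $M(K)$, each retaining its incidences. Since $K$ is connected, so is $A$. I first argue that cyclicity forces $|E(A)|\ge|V(A)|$: any induced cycle $C$ of length $\ge 4$ in $K$ meets each $T_i$ in at most $2$ vertices (three would span a triangle inside $C$, impossible for an induced cycle of length $\ge 4$), and when it meets $T_i$ in $2$ those vertices are consecutive on $C$. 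Hence $C$ visits at least two distinct $T_i$'s and its matching-edge steps form a closed walk of combinatorial length $\ge 2$ in $A$, producing a multi-edge or a cycle; together with connectedness this gives $|E(A)|\ge|V(A)|$.

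Classify each vertex of a $T_i$ by its at-most-one incident edge outside $T_i$: \emph{matched} (edge in $M(K)$), \emph{isolated} (no such edge), or \emph{outward} (edge leaving $V(K)$). Choosing one $v_i\in T_i$ per $i$ yields $N(I)\subseteq V(K)$ iff each $v_i$ is matched or isolated, and gives an independent $I$ iff no matching edge has both endpoints chosen. I encode this in a bipartite graph $B$ with left part $\{T_1,\ldots,T_m\}$ and right part $E(A)\cup\{n_i:T_i \text{ contains an isolated vertex}\}$, placing $T_i\sim e$ whenever $e\in E(A)$ is incident to $T_i$, and $T_i\sim n_i$ when $n_i$ exists. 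From a matching saturating the left side I form $I$ as follows: for each $T_i$, pick the endpoint in $T_i$ of the edge $T_i$ is matched to, or any isolated vertex of $T_i$ if $T_i$ is matched to $n_i$. These choices are internal by construction, and the injectivity of the matching on $E(A)$ guarantees that each matching edge has at most one chosen endpoint, so $I$ is independent and satisfies $N(I)\subseteq V(K)$.

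To check Hall's condition for $S\subseteq\{T_1,\ldots,T_m\}$, let $S^*=\{T_i\in S:T_i \text{ has no isolated vertex}\}$. Since
$$|N_B(S)|=|E_A(S)|+|S\setminus S^*|\ge|E_A(S^*)|+|S|-|S^*|,$$
it suffices to prove $|E_A(S^*)|\ge|S^*|$ for every $S^*\subseteq V(A)$, where $E_A(S^*)$ denotes matching edges with at least one endpoint in $S^*$. The case $S^*=V(A)$ is exactly the inequality $|E(A)|\ge|V(A)|$ established above. For $S^*\subsetneq V(A)$, let $k$ be the number of connected components of $A[S^*]$; the forest bound gives $|E(A[S^*])|\ge|S^*|-k$, and by connectedness of $A$ each of the $k$ components must send at least one edge to $V(A)\setminus S^*$, so $|\partial_A(S^*)|\ge k$. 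Adding yields $|E_A(S^*)|=|E(A[S^*])|+|\partial_A(S^*)|\ge|S^*|$. Hall's theorem then supplies the matching, completing the proof.

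The main obstacle is the Hall-type counting, with the delicate case being $S^*=V(A)$: it is precisely there that the extra edge in $A$ guaranteed by the induced long cycle in $K$ is needed, and indeed this is where the argument would collapse for an acyclic archipelago. All other subsets follow routinely from the forest/boundary bookkeeping combined with connectedness of $A$.
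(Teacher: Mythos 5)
Your proof is correct, and it takes a genuinely different route from the paper. The paper's argument is greedy and local: it observes that an induced cycle $C$ of length $\ge 4$ in $K$ alternates between $M(K)$ and intra-$K_4$ edges (so $C$ is even), takes the alternate vertices of $C$ as a seed independent set $I_0$ (one vertex from each $K_4$ that $C$ meets), and then applies the greedy extension of Observation~\ref{greedy} inside $K$. The degree bookkeeping --- each newly added vertex spends one edge on a previously enclosed neighbor and its remaining three edges inside its own $K_4$ --- is what guarantees $N(I)\subseteq V(K)$. Your argument instead packages the whole choice of one vertex per $K_4$ as a Hall-type matching problem on the ``tile graph'' $A$. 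This has two advantages: it makes absolutely explicit that the role of cyclicity is exactly the inequality $|E(A)|\ge|V(A)|$, which is needed only for the $S^*=V(A)$ case of Hall's condition (and this is precisely where the argument fails for an acyclic archipelago); and it avoids the slightly delicate counting hidden in the paper's appeal to Observation~\ref{greedy}, namely that $|N[I_0]|=4|I_0|$ rather than the naive $5|I_0|$ because the matching-edge neighbors of $I_0$ already lie inside the $K_4$'s visited by $C$. The cost is a heavier setup (auxiliary multigraph, bipartite encoding with dummy vertices $n_i$), and you do rely on the same key observations as the paper to control $A$: an induced cycle of length $\ge 4$ meets each $K_4$ in at most two, necessarily consecutive, vertices, and hence projects to a genuine cycle or doubled edge in $A$. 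Both proofs are complete; yours isolates the combinatorial core more cleanly, while the paper's is shorter once Observation~\ref{greedy} is in hand.
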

\begin{proof}
Let $M=M(K)$. Since $K$ is cyclic,  there exists in $K$ an induced cycle $C$ of length at least $4$. The edges of $C$ alternate between $M$ and
$E(K) \setminus M$, and hence $C$ is even. The set $I_0$ consisting of the odd vertices in $C$ is then  independent, and $N(I_0) \subseteq V(K)$.

 Let $I$ be the independent set obtained by using Observation \ref{greedy} starting from $I_0$
 in the graph induced by the vertices of $K$. Thus $|I| \ge |V(K)|/4$, and since $K$ is  $K_4$-coverable, in fact $|I|=|V(K)|/4$.
Observe that every vertex added to $I$ in the algorithm of Observation \ref{greedy}  has a neighbor among
the previous vertices, which belong to $V(K)$, and three  neighbors in its own $K_4$. Thus the newly vertex cannot have a neighbor outside $K$.
\end{proof}

 The  same argument   yields:

\begin{claim} \label{tooeasy}
If $K$ is an archipelago then for each vertex $v \in V(K)$ having a neighbor in $V(G) \setminus V(K)$ there is an independent set $I \subseteq V(K)$ containing $v$,  such that $|I|=|V(K)|/4$ and no vertex in $I \setminus \{v\}$  has a neighbor in $V(G) \setminus V(K)$.
\end{claim}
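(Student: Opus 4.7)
The plan is to imitate the proof of Claim~\ref{k4cycles} almost verbatim, but with the starting independent set $I_0$ replaced by the singleton $\{v\}$. Let $J$ denote the induced subgraph of $G$ on $V(K)$; since $K$ is a connected archipelago and $\Delta(G)\le 4$, $J$ is connected with $\Delta(J)\le 4$, so Observation~\ref{greedy} applies inside $J$. The hypothesis that $v$ has a neighbor outside $V(K)$ forces $\deg_J(v)\le 3$, so $|N_J[v]|\le 4$. Applying Observation~\ref{greedy} starting from $\{v\}$ therefore produces an independent set $I\subseteq V(K)$ of size at least $1+(|V(K)|-4)/4=|V(K)|/4$, and this bound is tight because $K$ is $K_4$-covered: $I$ contains exactly one vertex from each of the $|V(K)|/4$ many $K_4$'s of $K$.

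It remains to show that every vertex of $I\setminus\{v\}$ has no neighbor outside $V(K)$. For this I would reuse the degree-saturation observation from the proof of Claim~\ref{k4cycles}. Each vertex $w$ added to $I$ by a greedy step has three neighbors inside its own $K_4$ (all in $V(K)$) together with a further neighbor $u\in N_J(I_{\mathrm{prev}})\subseteq V(K)$, and when this $u$ is distinct from the three $K_4$-partners of $w$, the vertex $w$ has four distinct neighbors inside $V(K)$; the bound $\Delta(G)\le 4$ then forbids $w$ from having any neighbor outside $V(K)$.

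The main obstacle is ensuring distinctness, i.e.\ that the greedy's $N_J(I_{\mathrm{prev}})$-edge is not a $K_4$-edge of $w$ but an $M(K)$-edge. I would handle this by running the greedy in a controlled order: at every step, I would pick $w$ to be the $M(K)$-partner of some vertex already placed in $N_J(I_{\mathrm{prev}})$, sitting in an as-yet-unused $K_4$. Such a $w$ is always available because $v$ itself carries no $M(K)$-edge (its fourth $G$-edge being the given external one), so every $M(K)$-edge incident to $v$'s $K_4$ has its endpoint in $v$'s $K_4$ lying in $N_J(\{v\})$ and its other endpoint in some unused $K_4$; the connectivity of $K$, together with the fact that newly added vertices are themselves $M(K)$-endpoints, propagates this property throughout the greedy process. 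The resulting $w$ has exactly four neighbors in $V(K)$ — three inside its $K_4$ and one via an $M(K)$-edge — and hence no neighbor outside $V(K)$, completing the proof.
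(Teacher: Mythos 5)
Your proposal is correct and follows exactly the approach the paper intends: the paper proves Claim~\ref{tooeasy} by simply writing ``the same argument yields,'' referring back to Claim~\ref{k4cycles}, and your version makes this explicit with $I_0=\{v\}$. You are also right to notice that the bare invocation of Observation~\ref{greedy} does not automatically guarantee the greedy edge is an $M(K)$-edge rather than a $K_4$-edge; your controlled order (always reaching a fresh $K_4$ via an $M(K)$-edge whose used-side endpoint lies in $N_J(I_{\mathrm{prev}})$, which is possible since $v$ has no $M(K)$-edge and each later $v_i$'s unique $M(K)$-edge points back to an already-used $K_4$) is a legitimate and, arguably, necessary clarification of the argument that the paper leaves implicit in both Claim~\ref{k4cycles} and Claim~\ref{tooeasy}.
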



Claim \ref{tooeasy} is the main tool we shall use in the proof of the lemma, allowing us to take care of acyclic archipelagos $K$ that have non-independent neighborhoods. For such $K$ every independent set $J$ in our future $H$  omits a vertex in its neighborhood, and thus by Claim \ref{tooeasy} there exists an independent set $I_K \subseteq V(K)$ of size $|V(K)|/4$, such that $I_K \cup J$ is independent. Thus Claims \ref{k4cycles} and \ref{tooeasy}  allow us to remove with no penalty all cyclic archipelagos and all archipelagos with non-independent neighborhoods, towards the  removal of all $K_4$s.
Thus, the problem is posed by acyclic archipelagos with independent  neighborhood. Our strategy in this case is to add an edge inside this neighborhood, taking care not to generate a new $K_4$.

\begin{remark}\label{constantneighborhood}
The set $N(K)$ of vertices in the neighborhood of an archipelago $K$ remains the same throughout the process, since the edges we add are inside the neighborhood of a deleted archipelago, and as such they do not belong to $K$. Edges may be added inside $N(K)$, but as remarked this is in our favor.
\end{remark}

We shall remove the archipelagos in a special order, aimed to preserve  useful properties of $G$.

\begin{claim}
If $K$ is an acyclic archipelago with an independent neighborhood of size $2$ and $\zeta(K)>1$ then $V(K)\cup N(K)=V(G)$.
\end{claim}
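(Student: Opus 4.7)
My plan is to count the edges of the cut between $V(K)$ and $N(K)$, and to combine this count with the segment structure of the two Hamiltonian cycles $C_1, C_2$ whose union is $G$. Every vertex of a $K_4$ in $G$ has degree exactly $4$ (three neighbors in the $K_4$, one outside), so each $K_4$ contributes four outside edges. In the acyclic archipelago $K$ containing $t := \zeta(K)$ copies of $K_4$ linked by the $t-1$ matching edges of $M(K)$, these matching edges are each counted twice among the $K_4$s' outside edges, so the total number of $V(K)$-to-$N(K)$ edges equals $4t - 2(t-1) = 2t+2$. Combined with $|N(K)| = 2$ and $\Delta(G) \leq 4$, this cut carries at most $8$ edges, so $2t+2 \leq 8$ and $t \in \{2,3\}$.

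If $t = 3$, then all $8$ cut edges are incident to $N(K) = \{u,v\}$, so both $u$ and $v$ have all four of their edges going into $V(K)$. Neither is adjacent to any vertex of $R := V(G) \setminus (V(K) \cup N(K))$, and $V(K)$ itself has no edges to $R$ by the definition of $N(K)$. Hence $R$ is disconnected from $V(K) \cup N(K)$ in $G$, and since $G$ is connected (being a union of Hamiltonian cycles) we conclude $R = \emptyset$.

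The main work lies in ruling out $t = 2$. Because $V(K)$-vertices have degree $4$, no edge incident to $V(K)$ is shared between $C_1$ and $C_2$; in particular cut edges are unshared. Writing $p_i$ for the number of segments of $C_i$ inside $V(K)$, this gives $2p_1+2p_2 = 6$, so $p_1+p_2 = 3$. Swap labels if needed so that $p_2 = 2$; then $C_2$ has $2p_2 = 4$ cut edges, all landing in $\{u,v\}$. Since $C_2$ uses exactly two edges at each vertex, each of $u$ and $v$ must receive exactly two of these $C_2$-cut edges. Thus both of the $C_2$-edges at $u$ (and likewise at $v$) are cut edges, which makes $\{u\}$ and $\{v\}$ single-vertex outside segments of $C_2$. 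These are already the $p_2 = 2$ outside segments available, so no vertex of $R$ can lie in any outside segment of $C_2$, forcing $R = \emptyset$. But then $V(G) \setminus V(K) = \{u,v\}$, and the unique outside segment of $C_1$ (since $p_1 = 1$) must be a Hamilton path on $\{u,v\}$, which would use the edge $uv$ and contradict the independence of $N(K)$.

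The main obstacle is the $t = 2$ argument, which requires carefully tracking the alternation of each Hamiltonian cycle between $V(K)$ and its outside, and pitting the cut-edge count against the non-adjacency of the two vertices of $N(K)$.
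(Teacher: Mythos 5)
Your proof is correct, but it takes a genuinely different route from the paper's, so a comparison is worth making. You split by $\zeta(K)\in\{2,3\}$: the $\zeta(K)=3$ case falls out of connectivity of $G$, while the $\zeta(K)=2$ case requires the careful segment-alternation argument that occupies most of your write-up. The paper instead observes, from the same count of $2\zeta(K)+2\ge 6$ cut edges landing on only two vertices, that some $v\in N(K)$ receives at least three cut edges and hence at least two from the same Hamiltonian cycle $H_1$; then both $H_1$-edges at $v$ enter $V(K)$, so the other vertex $w$ of $N(K)$ would be a cut vertex of $H_1$ if anything lay outside $V(K)\cup N(K)$, contradicting that a Hamiltonian cycle is $2$-connected. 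That argument handles all values of $\zeta(K)$ uniformly and in a few lines. Your version is longer and case-bound, but it has a small bonus: the $\zeta(K)=2$ analysis actually derives a contradiction (via the forced edge $uv$), showing that this case never occurs under the hypotheses at all, not merely that $V(K)\cup N(K)=V(G)$; the paper does not need, and does not record, this extra fact.
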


\begin{proof}
Since $K$ is acyclic, $|M(K)| = \zeta(K)-1$. But there are exactly four edges leaving every $K_4$. Thus there are $4 \zeta(K)-2(\zeta(K)-1)=2\zeta(K)+2 \ge 6$ edges between $K$ and $N(K)$. Since $|N(K)|\le 2$  there is a vertex $v \in N(K)$ that receives $3$ edges from $K$, two of them being from the same Hamiltonian cycle $H_1$. Assume for contradiction that $V(K) \cup N(K)\neq V(G)$. Then deleting the other vertex of $N(K)$, if such exists, disconnects  $V(K)\cup \{v\}$ from the rest of
 $V(G)$ in $H_1$
(remembering that there are no other vertices in $N(K)$). This contradicts the fact that $H_1$ is $2$-connected.
\end{proof}

By the claim, we may assume that every acyclic archipelago with an independent neighborhood of size two has only one $K_4$.  We call such an archipelago {\em small} (see  Figure \ref{first case1}).

\begin{figure}[htbp,scale=0.5]
\begin{center}
\begin{tikzpicture}
\filldraw[black] (0,0) circle (2pt); 
\filldraw[black] (0,1) circle (2pt);
\filldraw[black] (1,0) circle (2pt);
\filldraw[black] (1,1) circle (2pt);
\draw (0,0) -- (0,1); 
\draw (0,0) -- (1,0);
\draw (0,0) -- (1,1);
\draw (0,1) -- (1,0);
\draw (0,1) -- (1,1);
\draw (1,0) -- (1,1);
\filldraw[black] (0,-1.5) circle (2pt)node[anchor=south] {$A$};
\filldraw[black] (1,-1.5) circle (2pt)node[anchor=south] {$B$};
\draw (0,0) to[out=200,in=150] (0,-1.5);
\draw (0,1) to[out=200,in=150] (0,-1.5);
\draw (1,0) to[out=-20,in=20] (1,-1.5);
\draw (1,1) to[out=-20,in=20] (1,-1.5);
\end{tikzpicture}
\caption{A small archipelago.}
\label{first case1}
\end{center}
\end{figure}
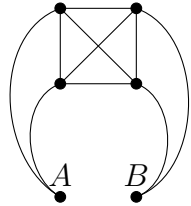

\begin{step}\label{smallnbd}
Removing small archipelagos.
\end{step}\label{removingn2}

We delete all small  archipelagos one by one,  connecting their two neighbors at each step. Each such deletion+connecting is called below an {\em operation}.

\begin{claim}
No new $K_4$s  are formed by this step.
\end{claim}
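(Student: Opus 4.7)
The plan is proof by contradiction: assume that after Step~\ref{smallnbd} the graph contains a $K_4$ on vertices $Q=\{A,B,C,D\}$ that was not a $K_4$ of $G$. I argue inductively on the operations, so that each previously removed small archipelago's $K_4$ $S_e$ corresponding to an added edge $e$ of $Q$ is already a $K_4$ of $G$. The first key step is a degree forcing: every $K_4$ in $G$ sends exactly $4$ edges out (a shared out-edge at $u\in K_4$ would force $\deg_G(u)\le 3$ contradicting the $3$ internal edges plus the shared external one), so $S_e$ has $4$ out-edges in $G$; combined with $\Delta(G)\le 4$ and the presence of the other two $Q$-edges at $X$ in the current graph, the $4$ edges leaving $S_e$ split $2$--$2$ between its two neighbors $X,Y$.

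From this forcing, at each $v\in Q$ an added incident $Q$-edge costs $v$ two $G$-edges (into the corresponding $S_e$) while an original one costs one. Since $v$ has three $Q$-edges and $\deg_G(v)\le 4$, a simple count shows at most one of them can be added, so the added $Q$-edges form a matching in $K_4$ of size $0$, $1$, or $2$. The size-zero case would already make $Q$ a $K_4$ of $G$, contradicting ``new.''

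For one added edge (WLOG $AB$, with attached $K_4$ $S_{AB}=\{u_1,u_2,u_3,u_4\}$) I would form the $8$-vertex set $T=S_{AB}\cup\{A,B,C,D\}$. The forcing yields that $A,B$ have $G$-neighborhoods $\{u_1,u_2,C,D\}$ and $\{u_3,u_4,C,D\}$ inside $T$, each $u\in S_{AB}$ sends its unique external $G$-edge into $\{A,B\}\subseteq T$, and $C,D$ contribute at most one outside $G$-edge each; hence $|\partial T|\le 2$ in $G$. Since $n>13$, $V\setminus T$ is nonempty, so each Hamiltonian cycle of $G$ uses at least two boundary edges. Double counting then forces either $|\partial T|\ge 4$ (contradicting $|\partial T|\le 2$) or that every boundary edge is shared between the two Hamiltonian cycles. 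But a shared edge at $C$ would give $\deg_G(C)=\sum_i\deg_{H_i}(C)-(\text{shared at }C)\le 4-1=3$, contradicting $C$'s four distinct $G$-edges $CA,CB,CD$ together with the outside one.

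For two added edges the matching is one of the three perfect matchings of $K_4$; WLOG it is $\{AB,CD\}$ with attached $K_4$s $S_{AB},S_{CD}$, and I would take $T=S_{AB}\cup S_{CD}\cup\{A,B,C,D\}$ of $12$ vertices. The same forcing makes $A,B,C,D$ all have $G$-degree $4$ with neighborhoods in $T$, and the $K_4$-vertices send their unique external $G$-edges into $T$ as well; so $\partial T=\emptyset$ in $G$, but $|V\setminus T|\ge n-12\ge 2$ forces a Hamiltonian cycle to miss $V\setminus T$, a contradiction. The main technical obstacle is the one-added case, where the bound $|\partial T|\le 2$ is tight and must be paired with the non-sharing argument at $C,D$; the sharing impossibility crucially leans on the fact that in this configuration $C$ and $D$ necessarily have $G$-degree exactly $4$.
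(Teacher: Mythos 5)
Your proof is correct and follows essentially the same route as the paper: you classify the added $Q$-edges as a matching of size $1$ or $2$ (the paper reaches this by the degree-monotonicity observation that only $CD$ can have been added earlier), and in each case you exhibit the $8$- or $12$-vertex neighborhood $T$ with $|\partial T|\le 2$ or $\partial T=\emptyset$, deriving a contradiction from the Hamiltonian cycles' need to cross the boundary — exactly the paper's Figures~\ref{first case2} and~\ref{first case3}. The one cosmetic difference is your closing of the one-added-edge case via a shared-boundary-edge degree count, where the paper instead argues directly that neither cycle can traverse $CD$ inside $T$; these are equivalent uses of the same $\partial T$ bound.
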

\begin{proof}
Consider first the first operation. It could result in a new $K_4$
only if $G$ contains the graph in Figure \ref{first case2} as a
subgraph. Since $n>13$, each of the two Hamiltonian cycles whose
union is $G$ must reach  this subgraph from the rest of the graph
via $C$ or $D$, and leave it from the other vertex in this pair.
Thus neither cycle can contain the edge $CD$, a contradiction.

\begin{figure}[htbp,scale=0.5]
\begin{center}
\begin{tikzpicture}
\filldraw[black] (0,0) circle (2pt); 
\filldraw[black] (0,1) circle (2pt);
\filldraw[black] (1,0) circle (2pt);
\filldraw[black] (1,1) circle (2pt);
\draw (0,0) -- (0,1); 
\draw (0,0) -- (1,0);
\draw (0,0) -- (1,1);
\draw (0,1) -- (1,0);
\draw (0,1) -- (1,1);
\draw (1,0) -- (1,1);
\filldraw[black] (0,-1.5) circle (2pt)node[anchor=south] {$A$};
\filldraw[black] (1,-1.5) circle (2pt)node[anchor=south] {$B$};
\draw (0,0) to[out=200,in=150] (0,-1.5);
\draw (0,1) to[out=200,in=150] (0,-1.5);
\draw (1,0) to[out=-20,in=20] (1,-1.5);
\draw (1,1) to[out=-20,in=20] (1,-1.5);
\filldraw[black] (0,-2.5) circle (2pt)node[anchor=north] {$C$}; 
\filldraw[black] (1,-2.5) circle (2pt)node[anchor=north] {$D$};
\draw (0,-1.5) -- (0,-2.5); 
\draw (0,-1.5) -- (1,-2.5);
\draw (1,-1.5) -- (0,-2.5);
\draw (1,-1.5) -- (1,-2.5);
\draw (1,-2.5) -- (0,-2.5);
\end{tikzpicture}
\caption{Impossible at the first replacement.}
\label{first case2}
\end{center}
\end{figure}
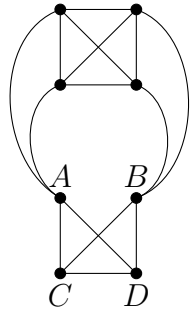

 Suppose, for contradiction, that in the chain of operations a new $K_4$ is generated.
As before, the graph obtained so far necessarily is as in Figure \ref{first case2}. Since  such a subgraph cannot be present in $G$, some edges have to come from  previous operations on small archipelagos. Observe that each such operaton reduces the degrees of the vertices of the newly added edge. Thus the only edge in Figure \ref{first case2} that could come from a previous deletion is the edge connecting $C$ to $D$. In this case, before that deletion our graph had to look like in figure \ref{first case3}.

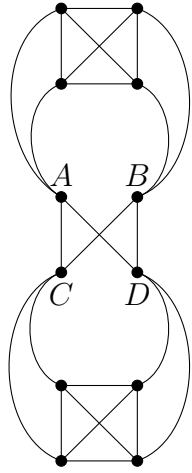
\begin{figure}[htbp,scale=0.5]
\begin{center}
\begin{tikzpicture}
\filldraw[black] (0,0) circle (2pt); 
\filldraw[black] (0,1) circle (2pt);
\filldraw[black] (1,0) circle (2pt);
\filldraw[black] (1,1) circle (2pt);
\draw (0,0) -- (0,1); 
\draw (0,0) -- (1,0);
\draw (0,0) -- (1,1);
\draw (0,1) -- (1,0);
\draw (0,1) -- (1,1);
\draw (1,0) -- (1,1);
\filldraw[black] (0,-1.5) circle (2pt)node[anchor=south] {$A$};
\filldraw[black] (1,-1.5) circle (2pt)node[anchor=south] {$B$};
\draw (0,0) to[out=200,in=150] (0,-1.5);
\draw (0,1) to[out=200,in=150] (0,-1.5);
\draw (1,0) to[out=-20,in=20] (1,-1.5);
\draw (1,1) to[out=-20,in=20] (1,-1.5);
\filldraw[black] (0,-2.5) circle (2pt)node[anchor=north] {$C$}; 
\filldraw[black] (1,-2.5) circle (2pt)node[anchor=north] {$D$};
\draw (0,-1.5) -- (0,-2.5); 
\draw (0,-1.5) -- (1,-2.5);
\draw (1,-1.5) -- (0,-2.5);
\draw (1,-1.5) -- (1,-2.5);
\filldraw[black] (0,-4) circle (2pt); 
\filldraw[black] (1,-4) circle (2pt);
\filldraw[black] (0,-5) circle (2pt);
\filldraw[black] (1,-5) circle (2pt);
\draw (0,-4) -- (1,-4);
\draw (0,-4) -- (0,-5);
\draw (0,-4) -- (1,-5);
\draw (1,-4) -- (0,-5);
\draw (1,-4) -- (1,-5);
\draw (0,-5) -- (1,-5);
\draw (0,-4) to[out=160,in=200] (0,-2.5);
\draw (0,-5) to[out=160,in=200] (0,-2.5);
\draw (1,-4) to[out=20,in=-20] (1,-2.5);
\draw (1,-5) to[out=20,in=-20] (1,-2.5);
\end{tikzpicture}
\caption{Impossible, unless $n=12$.}
\label{first case3}
\end{center}
\end{figure}

But in this case every vertex has degree four, thus $G$ consists of just these  $12$ vertices, contradicting our assumption that $n>13$.
\end{proof}

\begin{claim}\label{twmpreserved}
Each operation results in a two-miltonian graph.
\end{claim}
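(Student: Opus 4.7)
The plan is to take the two Hamiltonian cycles $H_1,H_2$ witnessing $G=H_1\cup H_2$, and for each $i$ produce a Hamiltonian cycle $H_i'$ on $V(G)\setminus V(K)$ by replacing $H_i$'s traversal of $V(K)$ with the single new edge $AB$. If each $H_i$ traverses $V(K)$ as one subpath entering at $A$ and leaving at $B$ (or vice versa), then the substitution is unambiguous, and bookkeeping gives
\[
E(H_1')\cup E(H_2')=\bigl(E(H_1)\cup E(H_2)\bigr)\setminus\{e : e\text{ is incident to }V(K)\}\cup\{AB\}=E(G'),
\]
so $G'$ is two-miltonian. The work therefore reduces to showing that both $H_i$ have such a ``clean'' traversal of $V(K)$.

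To establish this I would study the restriction $H_i\cap K_4$, which is a vertex-disjoint union of paths covering all four vertices of the $K_4$; let $t_i$ denote its number of components. Because $A$ and $B$ each have $H_i$-degree two, the number of $H_i$-edges between $V(K)$ and $\{A,B\}$ is at most four and equals $2t_i$, so $t_i\in\{1,2\}$. A component of $H_i\cap K_4$ cannot have both of its boundary attachments going to the same vertex of $\{A,B\}$: together with that vertex they would close a Hamiltonian cycle on at most five vertices, contradicting $n>13$ (when the component is a single vertex the obstruction is even simpler, as the same edge would have to be used twice at $A$ or $B$). Hence every component of $H_i\cap K_4$ joins $A$ to $B$.

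The second key step is a short double count of the six internal edges of $K_4$. A component with $p$ vertices contributes $p-1$ internal edges to $H_i$, so $H_i$ uses $4-t_i$ internal $K_4$-edges in total. Since $E(H_1)\cup E(H_2)\supseteq E(K_4)$, we must have $(4-t_1)+(4-t_2)\ge 6$, which forces $t_1=t_2=1$ and, as a bonus, that $H_1\cap K_4$ and $H_2\cap K_4$ are edge-disjoint. With $t_i=1$ for both $i$ the replacement scheme described above produces two Hamiltonian cycles whose union is exactly $G'$.

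I expect this double-count to be the main obstacle. Without it, one would have to deal with the case where some $H_i$ enters and leaves $V(K)$ twice, leaving two external arcs after the removal of $V(K)$; these cannot be reassembled into a Hamiltonian cycle using only the single new edge $AB$. The edge-covering hypothesis $G=H_1\cup H_2$, rather than just the existence of two Hamiltonian cycles in $G$, is exactly what rules this case out.
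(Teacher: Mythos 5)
Your proposal is correct and follows the same strategy as the paper: replace, in each of the two Hamiltonian cycles witnessing two-miltonicity, the detour through $V(K)$ by the new edge $AB$. The paper treats as obvious what you carefully verify — that each $H_i$ enters $V(K)$ exactly once, traverses it as a single subpath from $A$ to $B$, and exits — via the boundary-edge count $2t_i\le 4$ and the double count of the six internal $K_4$-edges forcing $t_1=t_2=1$.
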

\begin{proof}
By induction on the number of operations. Assuming that after a deletion the resulting graph is the union of two Hamiltonian cycles $H_1$ and $H_2$, replace in each of $H_1, H_2$ the detour through the archipelago by the newly added edge.
\end{proof}


This concludes Step \ref{smallnbd}. The resulting graph $G_1$   is  two-miltonian by Claim
\ref{twmpreserved}, and it contains no acyclic archipelagos with independent
neighborhoods of size two.   Note also that $G_1$ is
a supergraph of $G'$, and hence it is enough to prove Lemma \ref{technical} with $G_1$ replacing $G$.

\begin{step} \label{conn}
Taking care of connectedness.
\end{step}
In this step we add edges,
so as to make the graph $G_1$ connected. These edges will remain in the next steps, and so we shall not have to worry about connectedness  from this point on.

Let $H_1$ be one of the two Hamiltonian cycles forming $G_1$. Let $G_1'$ be the graph obtained from $G_1$ by removing
all $K_4$s from it, and let
 $C_1, \ldots, C_m$ be the
connected components of $G'_1$. Define an auxiliary graph $A$ on the vertex set $V(A):= \{C_1, \ldots, C_m\}$,
 two  vertices $C_i$ and $C_j$ being connected if there is a path contained in $H_1$,
 whose one endpoint is in $C_i$ and the other in $C_j$ and all the other vertices lie in a single archipelago.
 Since $H_1$ is  Hamiltonian, the graph $A$ is connected. Choose a spanning tree of $A$, and let
 $P_1, \ldots , P_{m-1}$ be subpaths of $H_1$ associated with each edge of the spanning tree.  For each path $P_i$ going through an archipelago $K_i$ connect the  endpoints of $P_i$ in the graph $G_1$, and delete all vertices of the archipelago $K_i$. The graph $G_2$ obtained this way is connected, it does not contain any new $K_4$s and $\Delta(G_2) \le 4$.

 \begin{remark}\label{decreasingdegree}
 If $G_1'$ was not connected then the degree of
some vertices  decreases. This is true since
removing an archipelago reduces the total degree of the vertices adjacent to it by at least $4$, and the addition of an edge increases it only by $2$.
\end{remark}

The construction also yields:
\begin{remark}\label{non2con}
Edges added in this process do not belong to a cycle in $G_2$
\end{remark}




\begin{claim} \label{happy}
Let $K$  be an acyclic archipelago in a graph of maximum degree at most four. Suppose that the neighborhood $N(K)$ is independent and has size $4$ or more.
Then we can delete $K$  and connect two vertices in $N(K)$ by an edge, so that the new edge doesn't generate a new $K_4$.
\end{claim}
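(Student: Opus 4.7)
The plan is to delete $K$ and then add a single edge $uv$ between two carefully chosen vertices of $N(K)$, so that no new $K_4$ is created. Set $R := V(G) \setminus (K \cup N(K))$ and $A_u := N(u) \cap R$ for each $u \in N(K)$. The starting observation is that if adding $uv$ creates a new $K_4$, then its other two vertices $x,y$ must both lie in $R$: they cannot lie in the deleted set $K$, and they cannot lie in $N(K)$ since each must be adjacent to $u \in N(K)$ while $N(K)$ is independent. Thus I call a pair $\{u,v\}$ \emph{bad} precisely when $A_u \cap A_v$ contains two vertices joined by an edge of $G$, and the task reduces to finding a non-bad pair in $N(K)$.

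The key degree budget is that every $u \in N(K)$ has at least one neighbor in $K$, so $|A_u| \le 3$, and every $x \in R$ has all its neighbors inside $N(K) \cup R$ (otherwise $x$ would itself lie in $N(K)$). Hence from $d(x) \le 4$ one can subtract the edges $x$ spends on $u$ and on its neighbors inside $A_u$ to bound the number of further $N(K)$-neighbors $x$ can have. I will prove that for every $u \in N(K)$ the set $B_u$ of bad partners $v \in N(K) \setminus \{u\}$ satisfies $|B_u| \le 2$; since $|N(K)| \ge 4$, this leaves at least one good partner, and we may add the corresponding edge.

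The argument splits according to the structure of $G[A_u]$. If $G[A_u]$ has no edge, then no $v$ is bad. If $G[A_u]$ has exactly one edge $xy$, every bad $v$ lies in $N(x) \cap N(K) \setminus \{u\}$, and since $x$ has used two of its at most four slots on $u$ and $y$, this set has size at most two, so $|B_u| \le 2$. If $G[A_u]$ has exactly two edges, they share a common vertex $x$ (there being only three vertices in $A_u$), every bad $v$ must then be adjacent to $x$, and $x$ has already used three slots on $u$ and its two neighbors in $A_u$, so $|B_u| \le 1$. The remaining case is that $G[A_u]$ is a triangle $xyz$.

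The main obstacle is this triangle subcase. A crude per-edge bound would permit one bad partner from each of the three triangle edges, for a total of three, which exceeds the allowance when $|N(K)| = 4$. The resolution is to observe that each of $x, y, z$ has spent three slots on $u$ and its two triangle neighbors, leaving at most one extra $N(K)$-neighbor; writing these as $w_x, w_y, w_z$, a $v$ bad via edge $xy$ must equal both $w_x$ and $w_y$, and similarly for the other two edges. Any two distinct bad partners would therefore require two different coincidences among $\{w_x, w_y, w_z\}$, but any two such coincidences share one of the three, forcing them to collapse to a single common vertex. Hence $|B_u| \le 1$ in the triangle case as well, completing the proof.
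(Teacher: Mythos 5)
Your proof is correct, and it takes a genuinely different route from the paper's. The paper argues by contradiction from the dual side: it assumes every pair $p\subseteq N(K)$ has a \emph{complementary pair} $q(p)$ of two further vertices that would complete a $K_4$, finds two pairs $p,p'$ sharing a vertex with $q(p)\neq q(p')$, and derives a degree-$5$ contradiction either at the shared vertex $v^*\in p\cap p'$ (if $q(p)\cap q(p')=\emptyset$) or at a vertex $v^{**}\in q(p)\cap q(p')$. You instead fix a single $u\in N(K)$, set $R=V(G)\setminus(K\cup N(K))$ and $A_u=N(u)\cap R$, observe $|A_u|\le 3$, and bound the number of bad partners $v$ by a clean case analysis on $G[A_u]$ (no edge, one edge, two edges sharing a vertex, triangle), getting $|B_u|\le 2$ in every case and concluding by pigeonhole since $u$ has at least three potential partners. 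Your version is more elementary and entirely local to $u$. What the paper's formulation buys is Remark~\ref{weakened}: the same argument shows the conclusion holds whenever not all pairs have the same complementary pair, and that is precisely the strengthening used in Step~\ref{hard} for archipelagos with independent neighborhood of size $3$, where the single exceptional configuration (all pairs sharing the complementary pair $\{O_1,O_2\}$) becomes the ``forbidding subgraph.'' Your per-$u$ counting bound of $|B_u|\le 2$ does not by itself recover that size-$3$ refinement, so the paper's phrasing is the more useful one for the later steps even though both are valid proofs of the claim as stated.
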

\begin{proof}
  Assuming negation, for every pair $p=\{x, y\}$ of vertices in $N(K) \setminus V(K)$ there exists a pair   $q(p) = \{u,v\} \subseteq V(G)\setminus N(K)$ such that  all pairs among $x,y,u,v$ apart from $xy$ are edges in $G$. We say that $q(p)$ is a {\em complementary pair} of $p$.

 Suppose first that there exist two  pairs $p_1,p_2$ such that $q(p_1) \neq q(p_2)$.
 If $p_1 \cap p_2 =\emptyset$ let $p_3$ be a pair meeting both. Then  $q(p_3) \neq q(p_i)$ for $i=1$ or $i=2$ (or both), proving that there exist non-disjoint pairs $p,p'$ with $q(p)\neq q(p')$.
 Let $v^*$ be the vertex in $p \cap p'$.
 If  $q(p) \cap q(p')=\emptyset$ , then $v^*$ is connected to the four vertices in $q(p) \cup q(p')$, and since as a member of $I$ it is also connected to a vertex in $K$, its degree in $G$ is at least $5$, a contradiction.
  On the other hand, if there exists a vertex $v^{**} \in q(p) \cap q(p')$,
  then  $v^{**}$ is connected to the five vertices in $q(p) \cup q(p') \cup p \cup p' \setminus \{v^{**}\}$, again a contradiction. \end{proof}

\begin{remark} \label{weakened}
 The proof yields a stronger result:   it suffices to assume that  in the independent neighborhood of the acyclic archipelago  not all pairs have the same complementary pair.
\end{remark}

\begin{step} \label{hard}
Deleting   acyclic archipelagos with independent neighborhoods of size  $3$.
\end{step}
Let $K$ be an  acyclic archipelago such that $N(K)$ is  independent and has size
$3$, say $N(K)=\{A,B,C\}$. By Remark \ref{weakened} if no pair in $N(K)$ can be connected without generating a $K_4$,
 all pairs must have
the same complementary pair, and thus $A,B,C$ are all connected to two vertices, $O_1$ and $O_2$.
In such a case we call $K$ {\em forbidden} and the $5$-vertex subgraph showing this {\em forbidding for $K$}.

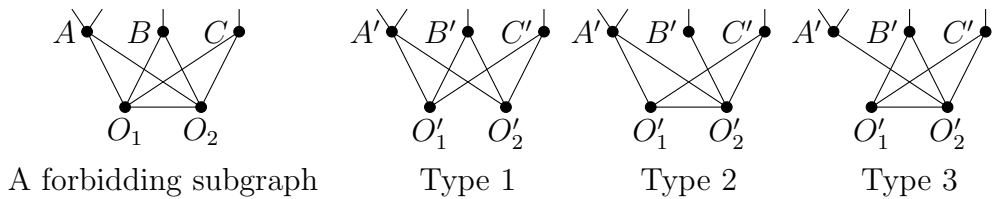
\begin{figure}[htbp,scale=0.5]
\begin{center}
\begin{tikzpicture}
\filldraw[black] (0,0) circle (2pt)node[anchor=east] {$A$}; 
\filldraw[black] (1,0) circle (2pt)node[anchor=east] {$B$};
\filldraw[black] (2,0) circle (2pt)node[anchor=east] {$C$};
\filldraw[black] (0.5,-1) circle (2pt)node[anchor=north] {$O_1$};
\filldraw[black] (1.5,-1) circle (2pt)node[anchor=north] {$O_2$};
\draw (0,0) -- (-0.2,0.3); 
\draw (0,0) -- (0.2,0.3); 
\draw (1,0) -- (1,0.3); 
\draw (2,0) -- (2,0.3); 
\draw (0,0) -- (0.5,-1); 
\draw (0,0) -- (1.5,-1);
\draw (1,0) -- (0.5,-1);
\draw (1,0) -- (1.5,-1);
\draw (2,0) -- (0.5,-1);
\draw (2,0) -- (1.5,-1);
\draw (0.5,-1) -- (1.5,-1);
\node[](The forbidden graph.) at (1,-2){A forbidding subgraph};
\end{tikzpicture}
\begin{tikzpicture}
\filldraw[black] (0,0) circle (2pt)node[anchor=east] {$A'$}; 
\filldraw[black] (1,0) circle (2pt)node[anchor=east] {$B'$};
\filldraw[black] (2,0) circle (2pt)node[anchor=east] {$C'$};
\filldraw[black] (0.5,-1) circle (2pt)node[anchor=north] {$O'_1$};
\filldraw[black] (1.5,-1) circle (2pt)node[anchor=north] {$O'_2$};
\draw (0,0) -- (-0.2,0.3); 
\draw (0,0) -- (0.2,0.3); 
\draw (1,0) -- (1,0.3); 
\draw (2,0) -- (2,0.3); 
\draw (0,0) -- (0.5,-1); 
\draw (0,0) -- (1.5,-1);
\draw (1,0) -- (0.5,-1);
\draw (1,0) -- (1.5,-1);
\draw (2,0) -- (0.5,-1);
\draw (2,0) -- (1.5,-1);
\node[](Type $1$) at (1,-2){Type $1$};
\end{tikzpicture}
\begin{tikzpicture}
\filldraw[black] (0,0) circle (2pt)node[anchor=east] {$A'$}; 
\filldraw[black] (1,0) circle (2pt)node[anchor=east] {$B'$};
\filldraw[black] (2,0) circle (2pt)node[anchor=east] {$C'$};
\filldraw[black] (0.5,-1) circle (2pt)node[anchor=north] {$O'_1$};
\filldraw[black] (1.5,-1) circle (2pt)node[anchor=north] {$O'_2$};
\draw (0,0) -- (-0.2,0.3); 
\draw (0,0) -- (0.2,0.3); 
\draw (1,0) -- (1,0.3); 
\draw (2,0) -- (2,0.3); 
\draw (0,0) -- (0.5,-1); 
\draw (0,0) -- (1.5,-1);
\draw (1,0) -- (1.5,-1);
\draw (2,0) -- (0.5,-1);
\draw (2,0) -- (1.5,-1);
\draw (0.5,-1) -- (1.5,-1);
\node[](Type $2$) at (1,-2){Type $2$};
\end{tikzpicture}
\begin{tikzpicture}
\filldraw[black] (0,0) circle (2pt)node[anchor=east] {$A'$}; 
\filldraw[black] (1,0) circle (2pt)node[anchor=east] {$B'$};
\filldraw[black] (2,0) circle (2pt)node[anchor=east] {$C'$};
\filldraw[black] (0.5,-1) circle (2pt)node[anchor=north] {$O'_1$};
\filldraw[black] (1.5,-1) circle (2pt)node[anchor=north] {$O'_2$};
\draw (0,0) -- (-0.2,0.3); 
\draw (0,0) -- (0.2,0.3); 
\draw (1,0) -- (1,0.3); 
\draw (2,0) -- (2,0.3); 
\draw (0,0) -- (1.5,-1);
\draw (1,0) -- (0.5,-1);
\draw (1,0) -- (1.5,-1);
\draw (2,0) -- (0.5,-1);
\draw (2,0) -- (1.5,-1);
\draw (0.5,-1) -- (1.5,-1);
\node[](Type $3$) at (1,-2){Type $3$};
\end{tikzpicture}
\caption{The forbidden subgraph $K$ is invisible in the picture. The Type $1,2,3$ \textit{risking} subgraphs are the three graphs (up to isomorphism) that are one edge short to be forbidding.}
\label{disgusting}
\end{center}
\end{figure}

\begin{claim}
$G_2$ does not contain a forbidden archipelago.
\end{claim}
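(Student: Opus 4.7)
The plan is to argue by contradiction. Suppose $G_2$ contains a forbidden archipelago $K$ with forbidding subgraph $F$ on vertices $\{A,B,C,O_1,O_2\}$; I will analyze the two Hamiltonian cycles $H_1,H_2$ of $G$ to derive a contradiction.

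First I would lift the structure of $F$ back to $G$ itself. In $F$ both $O_1$ and $O_2$ have $G_2$-degree $4$. By Remark~\ref{decreasingdegree} every edge added in Step~\ref{smallnbd} strictly decreases the degree of each of its endpoints, so if some $F$-edge incident to $O_i$ had been added in that step we would get $d_G(O_i)\ge 5$, contradicting $\Delta(G)\le 4$. By Remark~\ref{non2con} every edge added in Step~\ref{conn} is a bridge of $G_2$; but $F$ is $2$-edge-connected and therefore contains no bridge of $G_2$. Hence $E(F)\subseteq E(G)$ and $O_1,O_2$ already have $G$-degree $4$ with all of their $G$-edges inside $F$. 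Moreover Steps~\ref{smallnbd} and~\ref{conn} delete only entire archipelagos, and the $K_4$-vertex sets of distinct maximal archipelagos are disjoint, so $V(K)$ together with its internal $K_4$-structure is present in $G$ as well.

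Next I would pin down the structure of $H_1,H_2$ at $\{O_1,O_2\}$. The edge $O_1O_2$ must lie in exactly one of $H_1,H_2$: lying in both forces $O_1$ to have only $3$ distinct neighbors in $H_1\cup H_2$, and lying in neither forces all four cycle-incidences at $O_1$ onto the $3$-element set $\{A,B,C\}$, again giving only $3$ distinct neighbors. Say $O_1O_2\in H_1$. Triangle-freeness of $H_1$ forces $O_1$ and $O_2$ to have distinct other $H_1$-neighbors in $\{A,B,C\}$, which we may call $B$ and $C$. Then $O_1$'s $H_2$-neighbors are $\{A,C\}$ and $O_2$'s are $\{A,B\}$, and the absence of a $4$-cycle in $H_2$ forces $A$ to be $H_2$-adjacent to both $O_1$ and $O_2$, giving the $H_2$-segment $C-O_1-A-O_2-B$.

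The contradiction now comes from a three-paths count on $H_2$. Since $\{A,B,C\}$ is independent in $G$, its three vertices are pairwise non-adjacent in the cycle $H_2$, so $H_2\setminus\{A,B,C\}$ consists of exactly three paths; from the segment above, two of them are the singletons $\{O_1\}$ and $\{O_2\}$. The remaining path $P$ is connected, has $n-5$ vertices, and must lie inside a single connected component of $G\setminus\{A,B,C\}$. Now $\{O_1,O_2\}$ is a component of $G\setminus\{A,B,C\}$ because all $G$-edges at $O_1,O_2$ lie in $F$. A simple count of external edges of $V(K)$ gives $2\zeta(K)+2\le 6$ and hence $|V(K)|\le 8$, so in the principal case $N_G(K)=\{A,B,C\}$ the set $V(K)$ is another component of $G\setminus\{A,B,C\}$; as $n>13$ guarantees at least one further vertex outside $V(K)\cup\{A,B,C,O_1,O_2\}$, the path $P$ would need to span at least two components of $G\setminus\{A,B,C\}$, a contradiction.

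The main obstacle I expect is the residual case where $N_G(K)\supsetneq\{A,B,C\}$, so that some $V(K)$-externals in $G$ fall into the deleted set $D=V(G)\setminus V(G_2)$ and $V(K)$ can merge with part of $D$ into one component of $G\setminus\{A,B,C\}$. I would close this case by a global edge-count: in the tightest scenario $|V(G_2)|$ is pushed down to $5+4\zeta(K)\le 13$, and each Step~\ref{smallnbd} operation replaces $10$ archipelago-edges by $1$ added edge, so $e(G)=e(G_2)+9k$ for some $k\ge 1$ when $|V(G_2)|<n$; combined with $e(G)\le 2n$ from $\Delta(G)\le 4$ this is arithmetically impossible, and the intermediate sizes are ruled out by the same bookkeeping applied to the $H_1$- and $H_2$-boundary edges incident to $\{A,B,C\}$.
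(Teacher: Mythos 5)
Your main argument takes a genuinely different route from the paper's. You carefully pin down how $H_1$ and $H_2$ pass through $O_1,O_2$, observe that in $H_2$ both $O_1$ and $O_2$ become isolated after deleting $\{A,B,C\}$, and then play the size of the third $H_2$-path (of length $n-5$) against the sizes of the components of $G\setminus\{A,B,C\}$. The paper instead argues directly that the $\le 13$-vertex set $V(K)\cup\{A,B,C,O_1,O_2\}$ has at most two edges to the rest of $G$ (since $O_1,O_2,A$ are saturated inside and $B,C$ already have degree $\ge 3$ inside), and this is incompatible with $G$ being the union of two Hamiltonian cycles on $n>13$ vertices: the two cycles would have to share both cut-edges, forcing the $G$-degree of a cut-edge endpoint down to $3$. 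Both approaches need the common fact that $V(K)$ contributes no $G$-edges leaving the set; the paper asserts this terse\-ly, while you treat it as a separate ``residual case.''

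It is exactly that residual case where your proof has a real gap. The fallback you propose --- a global edge count $e(G)=e(G_2)+9k$ combined with $e(G)\le 2n$ --- does not close it. First, the formula only accounts for Step~\ref{smallnbd}-type deletions; Step~\ref{conn} deletions remove archipelagos of varying sizes with a different edge bookkeeping, so $e(G)=e(G_2)+9k$ is false in general. Second, even in the tightest scenario you describe, the inequality $e(G_2)+9\frac{n-13}{4}\le 2n$ only becomes vacuous for $n$ roughly beyond $100$; for $13<n\lesssim 117$ it gives nothing, so ``arithmetically impossible'' does not hold in the range the lemma needs. Finally, the sentence ``the intermediate sizes are ruled out by the same bookkeeping applied to the $H_1$- and $H_2$-boundary edges'' is not an argument. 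What is actually true is that the residual case cannot arise at all: if $u\in V(K)$ had a $G$-edge to a deleted vertex $w$, then $w$ lies in a $K_4$, so $u$ and $w$ lie in the same maximal archipelago, which is deleted in one piece --- hence $u$ would have been deleted too. Replacing your edge-count paragraph with this observation (or simply switching to the paper's cut-edge argument, which handles all of $B,C$'s boundary edges at once) would make the proof complete.
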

\begin{proof}
Suppose that $G_2$ contains a forbidden archipelago with forbidding subgraph $F$, with vertices denoted as in the figure.
 Since $A,B,C \in N(K)$, they do not belong to any archipelago $J$, or else $K$ and $J$, being connected, would be contained in the same archipelago. The same is true for $O_1,O_2$, since they are of degree $4$ in $F$.
By Remark \ref{non2con} no edges in $F$
were added in Step \ref{conn}. Also, no edges inside $F$
was added in Step \ref{removingn2}, since the endpoints of any edge that
is added in Step \ref{removingn2} have maximum degree three. Thus $F$ is also a subgraph of $G$.
The archipelago $K$ consists of at most two $K_4$s, since
otherwise the degrees of some of the $A,B,C$ would be larger than
four.
But then the subgraph consisting of the union of $K$
 and the vertices $A,B,C,O_1,O_2$ has at most $13$
vertices, and it is connected to the rest of the graph by at most
two edges. Since the endpoints of these two edges have degree $4$,
this contradicts the fact that $G$ is  two-miltonian on more than
$13$ vertices.
\end{proof}

 We next delete archipelagos one by one, taking care not to generate a forbidden archipelago. The risk is that connecting two neighbors of a deleted archipelago may create a forbidding subgraph for some other archipelago.
Up to isomorphism, there are  three subgraphs  that are one edge short of the forbidding subgraph, see the graphs named
Type $1,2,3$ in Figure \ref{disgusting} (a step that helps in realizing this is noting that  in the forbidding subgraph the role of $O_1,O_2$ and $B,C$ is
symmetric). We will say that an  acyclic
archipelago $K'$ is {\em risky} if it has an independent neighborhood consisting of three vertices that are connected, besides to vertices in $K'$, to vertices $O_1, O_2$, forming a graph $Y$ of one of these three types. We say that the subgraph $Y$ is {\em risking} for $K$.
By Claim \ref{4edgesout} $K$ sends at least
four edges to its neighborhood, and hence at least one of the vertices $A,B,C$ receives from $K$  two
edges. As in the figure, we denote this vertex by $A$, and whenever ``$A$'' is used in this context we assume that it has degree at least $2$ to the risky archipelago.

\begin{claim} \label{worst part}
Let $K'$ be a risky  archipelago  contained in $G_2$,
and denote the vertices in its risking subgraph as in Figure \ref{disgusting}.
Then
deleting $K'$ and
connecting $A'$ to $B'$ does not generate  a
forbidding subgraph for some other  archipelago.
\end{claim}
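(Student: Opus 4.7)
The plan is to argue by contradiction. Suppose that deleting $K'$ and adding the edge $A'B'$ creates a forbidding subgraph $F$ for some archipelago $K$, with vertex set $\{A,B,C,O_1,O_2\}$, where $\{A,B,C\}=N(K)$ is independent and the six edges $XO_i$ (for $X\in\{A,B,C\}$, $i\in\{1,2\}$) together with $O_1O_2$ are all present. Since by the previous claim $G_2$ contains no forbidding archipelago, and the only edge introduced by the operation is $A'B'$, this edge must be one of the seven edges of $F$.

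The first step pins down the roles of $A'$ and $B'$ in $F$. By the convention on risky archipelagos, $A'$ sends at least two edges to $K'$, so its post-op degree is at most $4-2+1=3$. Since in $F$ every vertex has degree $2$ or $4$, $A'$ must play the role of one of the three neighborhood vertices of $K$; by relabelling assume $A'=A$. The other endpoint $B'$ then lies in $\{O_1,O_2\}$ because $\{A,B,C\}$ is independent in $F$, and its degree $4$ in $F$ combined with $\Delta\le 4$ forces the post-op neighborhood of $B'$ to equal exactly $\{A',B,C,O_2\}$; assume $B'=O_1$.

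Next I would split into cases according to the three risking types. In each type, matching the risking-subgraph neighbors of $B'$ together with $B'$'s extra post-op neighbors (those outside the risking subgraph and $V(K')$) against $\{A',B,C,O_2\}$ pins down how $O'_1,O'_2$ and the extra neighbors distribute among the roles $B,C,O_2$. The independence of $\{A,B,C\}$ in $F$ forbids any $O'_i$ adjacent to $A'=A$ from playing the role of $B$ or $C$. In Type 1 both $O'_1$ and $O'_2$ are adjacent to $A'$, so both would be forced into the single role $O_2$, an immediate contradiction. In Type 2 the same argument forces $O'_2=O_2$ and places $B,C$ among the two extra post-op neighbors of $B'$; but then $A'$'s three post-op neighbors are exactly $\{O'_1,O'_2,B'\}$, and a short degree count shows that none of them can lie in $V(K)$, each already having too many incident edges to also belong to a $K_4$ of $K$. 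This contradicts $A\in N(K)$.

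The main obstacle is Type 3, where $A'\not\sim O'_1$ permits $O'_1$ to be identified with $B$. Pushing the analysis through leaves only the assignment $B=O'_1$, $C=e$, $O_2=O'_2$, where $e$ is the unique extra post-op neighbor of $B'$. The observation that closes this case is that in Type 3 the vertex $O'_2$ already has four risking-subgraph neighbors, namely $\{A',B',C',O'_1\}$, so by $\Delta\le 4$ these are \emph{all} of its post-op neighbors. Yet $F$ demands $C\sim O_2$, i.e.\ $e\sim O'_2$; and $e$ lies outside the risking subgraph and $V(K')$, with $e\ne C'$ because $B'\not\sim C'$ by independence of $N(K')$. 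Hence $e\notin\{A',B',C',O'_1\}$, a contradiction with $e\sim O'_2$ that closes the proof.
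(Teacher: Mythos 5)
Your proof is correct, and it reaches the same conclusion by the same fundamental observations (the degree of $A'$ strictly drops, $\Delta\le 4$, and a case analysis on the type of the risking subgraph $Y$ of $K'$), but the decomposition is genuinely different from the paper's. The paper first pins down the type of the risking subgraph $Z$ of $K$: it rules out $Z$ of type 1 and type 3 by noting that in those cases the added edge joins two vertices that end up with degree $4$, contradicting the fact that $A'$'s degree decreases, so $Z$ must be type 2 with $A'=B$, $B'=O_1$; only then does it case on the type of $Y$. You skip the case analysis on $Z$ entirely: you observe directly that $A'$'s post-operation degree is at most $3$, so $A'$ must play a degree-$2$ role in $F$, i.e.\ $A'\in\{A,B,C\}$, and then $B'\in\{O_1,O_2\}$ with post-op neighborhood exactly $\{A',B,C,O_2\}$. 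This is cleaner, since you never need the special labelling of $Z$. For $Y$ of type 1 or 2 the paper uses a single five-edges-at-$A'$ count, whereas you argue type 1 via a pigeonhole on $O'_1,O'_2$ both being forced into the $O_2$ role, and type 2 via showing none of $A'$'s three surviving neighbors can lie in $V(K)$, contradicting $A'\in N(K)$. These are equivalent in substance (the type-2 step is just the paper's degree count dualized: a pre-op degree $\ge5$ is the same obstruction as a post-op neighborhood too small to reach $K$). For type 3 both proofs identify $O'_2=O_2$ and close by exhibiting a fifth neighbor of $O_2$ (the paper uses $C'$, you use the extra neighbor $e=C$ of $B'$); these are the same contradiction. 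One small cosmetic note: your remark that $e\ne C'$ ``because $B'\not\sim C'$'' is redundant, since $e$ is already defined to lie outside the risking subgraph, which excludes $C'$ automatically.
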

\begin{proof}

 Let $Y$ be the risking subgraph of $K'$. Suppose, by negation,  that
 deleting $K'$ and adding
 the edge $e=A'B'$
 generates
 a forbidden  archipelago $K$. This was born from a risking graph $Z$
 for $K$.
Denote the vertices of  $Z$ by  $A,B,C,O_1,O_2$, as in the figure.
Since at least two edges were removed from the star of $A'$ and only
one edge was added, the degree of  $A'$ strictly decreases  by the operation. If $Z$ is of type $1$, then the edge added
is between $O_1$ and $O_2$, both of which become of degree $4$, and thus the one that is identical to $A'$
 had degree at least $5$ before the operation. This is impossible, since throughout  the
process  degrees of vertices do not increase, and  $\Delta(G) \le 4$.
A similar argument applies if $Z$ is of type $3$, and the added edge is $AO_1$. Thus we may assume that $Z$ is of type $2$,
and that $e=B'O_1'$, see Figure \ref{In the end}.

Since  the degree of $A'$  decreased, and after the addition of $e$ the vertex $O'_1$ has degree $4$, it is impossible that $A'=O'_1$. Hence $A'=B$ and  $B'=O_1$, see Figure \ref{In the end}.
If $Y$ is of Type $1$ or $2$, then
  $A'$ is connected to the two
opponents which are not inside any archipelago. But $A'$ has already
$3$ different neighbors in archipelagos (two from  $Y$ and at least one from $Z$), implying that $A'$
has degree at least $5$ in $G$, a contradiction. Thus
we may assume that $Y$  is
of type $3$.

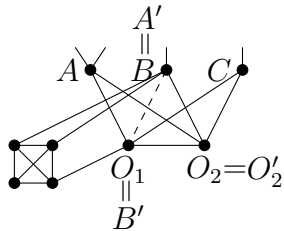
\begin{figure}[htbp] 
\begin{center}
\begin{tikzpicture}
\filldraw[black] (0,0) circle (2pt)node[anchor=east] {$A$}; 
\filldraw[black] (1,0) circle (2pt)node[anchor=east] {$B$};
\filldraw[black] (2,0) circle (2pt)node[anchor=east] {$C$};
\filldraw[black] (0.5,-1) circle (2pt)node[anchor=north] {$O_1$};
\filldraw[black] (1.5,-1) circle (2pt)node[anchor=north] {$O_2$};
\draw (0,0) -- (-0.2,0.3); 
\draw (0,0) -- (0.2,0.3); 
\draw (1,0) -- (1,0.3); 
\draw (2,0) -- (2,0.3); 
\draw (0,0) -- (0.5,-1); 
\draw (0,0) -- (1.5,-1);
\draw[dashed] (1,0) -- (0.5,-1);
\draw (1,0) -- (1.5,-1);
\draw (2,0) -- (0.5,-1);
\draw (2,0) -- (1.5,-1);
\draw (0.5,-1) -- (1.5,-1); 
\filldraw[black] (-0.5,-1) circle (2pt);
\filldraw[black] (-1,-1) circle (2pt);
\filldraw[black] (-0.5,-1.5) circle (2pt);
\filldraw[black] (-1,-1.5) circle (2pt);
\draw (-0.5,-1) -- (-1,-1);
\draw (-0.5,-1) -- (-0.5,-1.5);
\draw (-0.5,-1) -- (-1,-1.5);
\draw (-1,-1) -- (-0.5,-1.5);
\draw (-1,-1) -- (-1,-1.5);
\draw (-0.5,-1.5) -- (-1,-1.5);
\draw (-0.5,-1) -- (1,0);
\draw (-1,-1) -- (1,0);
\draw (-0.5,-1.5) -- (0.5,-1);
\node[](A') at (0.75,0.70){$A'$}; 
\node[rotate=90](=1) at (0.75,0.35){$=$};
\node[](B') at (0.5,-1.95){$B'$};
\node[rotate=90](=2) at (0.5,-1.6){=};
\node[](O'_2) at (2.3,-1.35){$O'_2$};
\node[](=3) at (1.9,-1.35){=};
\end{tikzpicture}
\caption{$C'$ can not be any vertex in this picture, and it must also be connected to $O'_2$, a contradiction.}
\label{In the end}
\end{center}
\end{figure}

Since in a Type $3$ subgraph, $A'$ is connected to $O'_2$, we conclude that $O_2=O'_2$ or else $A'$ would have degree $5$. Since $A',B',C'$ are independent, $C'$ must be a vertex different from $A,B,C,O_1,O_2$, and since in a Type $3$ subgraph $C'$ is connected to $O'_2$ which is equal to $O_2$, this means that  $O_2$ has degree $5$, again a contradiction.
\end{proof}

By the claim it is possible to delete all risky acyclic archipelagos one by one,

We now remove acyclic archipelagos with neighborhoods of size $3$,
adding an edge at each stage,
as follows.
If at the current stage there are no risky acyclic archipelagos we delete any archipelago with independent neighborhood of size $3$ and connect two of its neighbors, without risking the generation of a $K_4$ (since there are no forbidden archipelagos), and without generating a forbidden archipelago (since there are no risky archipelagos). At stages in which there is a risky archipelago
we use the claim to remove such an archipelago, while not generating a risky archipelago, and not generating a $K_4$ (the latter
following from the non-existence of a forbidden archipelago).

Let $G_3$ be the graph obtained after these operations. Then $G_3$ is connected, it does not contain any new $K_4$s, and $\Delta(G_3) \le 4$.


\begin{step}
Removing all remaining archipelagos.

\end{step}

Since $G_3$ does not contain any  acyclic archipelagos with independent neighborhoods of size two or three, by Lemma \ref{happy} we can delete every acyclic archipelago with an independent neighborhood one by one, and after each deletion we can connect some vertices in their neighborhood without creating a new $K_4$. After we deleted every acyclic archipelago with an independent neighborhood, we delete every other  acyclic archipelago and every other cyclic archipelago without adding any additional edges. Let $H$ be the graph obtained from $G_3$ this way.

We claim that $H$ satisfies the requirements of the lemma. Clearly, it is $K_4$-free, and Step \ref{conn} saw to it that it is connected. At each step of our construction degrees of vertices only went down. If in Steps $1$ or $3$ any archipelagos were deleted, the degree of some vertices strictly decreased, since   only one edge is added, while at least $4$ edges were removed as the result of the removal of the archipelago. By Remark \ref{decreasingdegree} each deletion of archipelagos in Step $2$ also decreased the degree of at least one vertex. Condition \ref{strong} in the lemma follows from our construction and Claim \ref{k4cycles}.
\end{proof}

\begin{remark} \label{two-miltonian}
The two-miltonian property of $G$ was used:
\begin{itemize}
\item For the property that $\Delta(G) \leq 4$ .
\item For the property that the $K_4$s in $G$ are vertex disjoint.
\item For the property that an acyclic archipelago sends a certain number of edges to its neighbourhood. This can be avoided since if it would send less, we could treat it as if it is a cyclic archipelago.
\item In Step $1$ to forbid subgraphs like in figure \ref{first case2}. This is important since we have to forbid graphs like \ref{counterexample} as Lemma \ref{technical} can not be applied to such graphs.
\item In Step 2 to guarantee connectedness. This could be avoided by paying attention to the connectedness of the $K_4$-free part at each deletion. (Although this would make the proof even more unpleasant to read.)
\end{itemize}
Thus the authors feel that results similar to Lemma \ref{technical} should hold with assumptions on $G$ that can replace the role of two-miltonicity at the above mentioned parts of the proof.
\end{remark}

\begin{corr} \label{smooth}
In a two-miltonian graph $G$ on $n$ vertices
 $$ \alpha(G) \geq  \zeta(G) + \frac{7}{26}(n-4\zeta(G))  - O(1).$$

\end{corr}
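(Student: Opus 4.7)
The plan is to combine Lemma \ref{technical} with Theorem \ref{7/26} in the most direct fashion. Assume first that $n > 13$, so that Lemma \ref{technical} applies. Apply it to obtain a graph $H$ on the vertex set $V(G')$, where $G'$ is obtained from $G$ by removing all vertices lying in some $K_4$. Since $G$ is two-miltonian we have $\Delta(G) \le 4$, which forces the $K_4$s in $G$ to be pairwise vertex disjoint; hence $|V(H)| = n - 4\zeta(G)$. By the lemma, $H$ is connected, $K_4$-free, and satisfies $d_H(v) \le d_G(v) \le 4$, so $\Delta(H) \le 4$.

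Next, invoke Theorem \ref{7/26} on $H$. Since $H$ is connected, $K_4$-free, and has maximum degree at most $4$, there exists an independent set $I$ of $H$ with
$$|I| \ge \frac{7|V(H)| - 4}{26} = \frac{7(n - 4\zeta(G)) - 4}{26}.$$
Now use property \eqref{strong} of Lemma \ref{technical}: there is a set $J$ consisting of exactly one vertex from each $K_4$ in $G$ such that $I \cup J$ is independent in $G$. Because the $K_4$s of $G$ are vertex disjoint and disjoint from $V(H)$, we have $|J| = \zeta(G)$ and $I \cap J = \emptyset$, so
$$\alpha(G) \;\ge\; |I| + |J| \;\ge\; \frac{7}{26}\bigl(n - 4\zeta(G)\bigr) + \zeta(G) - \tfrac{2}{13}.$$

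For the finitely many values $n \le 13$, the trivial bound $\alpha(G) \ge \zeta(G)$ (take one vertex from each $K_4$) together with the boundedness of $n$ contributes only an additive constant, which is absorbed into the $O(1)$ term. I do not anticipate any real obstacle: once Lemma \ref{technical} is available, the proof is essentially a one-line substitution. The only thing to verify carefully is that the constant $-4/26$ coming from Theorem \ref{7/26}, together with the small-$n$ exceptions, indeed fits into a single $O(1)$ slack independent of $n$ and $\zeta(G)$, which it does.
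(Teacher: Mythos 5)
Your proof is correct and follows essentially the same route the paper takes: apply Lemma \ref{technical} to produce a connected, $K_4$-free graph $H$ on $n-4\zeta(G)$ vertices with $\Delta(H)\le 4$, invoke Theorem \ref{7/26} on $H$, and then use property \eqref{strong} to augment the resulting independent set by one vertex per $K_4$. The extra care you took with the constant $-4/26$ and the finitely many small-$n$ cases is fine but not substantively different from the paper's treatment, which simply absorbs them into the $O(1)$.
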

\begin{proof}
Let $H$ be the graph  obtained from $G$ as in  Lemma \ref{technical}. By Theorem \ref{7/26} $\alpha(H) \ge \frac{7}{26}|V(H)|-O(1)=\frac{7}{26}(n-4\zeta(G))-O(1)$, and
 by part (4) in the conclusion of the lemma $\alpha(G) \ge \alpha(H)+\zeta(G)$.
 \end{proof}

\section{Calculating $f(n,n/4)$}\label{sec:nover4}

We will use a theorem that was proved by Albertson, Bollob\'as and Tucker. We state it in a simplified form, tailored to our needs. For the more general version see \cite{abt}

\begin{thm}[M. Albertson, B. Bollobás, S. Tucker] \label{stoneage}
 If $G$ is  $K_4$-free,  $\Delta (G) \leq 4$ and $G$ is not $4$-regular, then $\alpha(G)>\frac{n}{4}$.
\end{thm}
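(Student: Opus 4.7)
The plan is to reduce the result to the Locke--Lou inequality (Theorem \ref{notevenmyfinalform}) and then close the gap with a short parity observation on the degree sum.

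First I would dispose of the disconnected case. Suppose $G$ has connected components $G_1,\ldots,G_m$; since $G$ is not $4$-regular, some component, say $G_1$, is not $4$-regular. Each remaining component $G_j$ inherits the hypotheses of being $K_4$-free with $\Delta\le 4$, so Brooks' theorem yields $\chi(G_j)\le 4$ and hence $\alpha(G_j)\ge |V(G_j)|/4$; the exceptional cases of Brooks' theorem either contain a $K_4$ (like $K_5$) or are odd cycles, for which $\alpha(G_j)=\lfloor |V(G_j)|/2\rfloor > |V(G_j)|/4$. Thus it suffices to establish the strict inequality $\alpha(G_1)>|V(G_1)|/4$ for the single non-$4$-regular component, i.e.\ to prove the theorem under the additional assumption that $G$ is connected.

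So assume $G$ is connected, $K_4$-free, $\Delta(G)\le 4$, and not $4$-regular. Some vertex has degree at most $3$, and since $\sum_{v}d(v)=2e$ is even we get $2e \le 4n-2$, i.e.\ $e \le 2n-1$. Theorem \ref{notevenmyfinalform} then gives
\[
\alpha(G)\ \ge\ \frac{9n-e-4}{26}\ \ge\ \frac{7n-3}{26}.
\]
A direct calculation shows that $(7n-3)/26 > n/4$ is equivalent to $n>6$, so the conclusion $\alpha(G)>n/4$ follows for every $n \ge 7$.

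It remains to verify $n\le 6$ by inspection. For $n\le 3$ even $\alpha(G)\ge 1 > n/4$ is immediate, and for $4\le n\le 6$ the requirement is merely $\alpha(G)\ge 2$; but $K_4$-freeness together with $n\ge 4$ forbids $G$ from being a complete graph, so $G$ has a non-edge and $\alpha(G)\ge 2$. The only substantive content of the argument is the Locke--Lou bound, which does all the heavy lifting; the main delicate point is that $(7n-3)/26 > n/4$ only kicks in at $n=7$, so the small-$n$ inspection for $n\in\{4,5,6\}$ is necessary, though entirely routine.
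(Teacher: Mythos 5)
The paper does not actually prove this statement --- it is cited as a known result of Albertson, Bollob\'as and Tucker \cite{abt} and used as a black box. Your argument is therefore necessarily a different route, and it is a correct, self-contained derivation from the Locke--Lou inequality (Theorem \ref{notevenmyfinalform}), which the paper does state. The reduction to the connected case is sound: for any connected $K_4$-free component $G_j$ with $\Delta(G_j)\le 4$, Brooks gives $\chi(G_j)\le 4$ (the excluded complete graph $K_5$ would contain a $K_4$, and odd cycles have $\chi=3$), hence $\alpha(G_j)\ge |V(G_j)|/4$, while the unique non-$4$-regular component supplies the strict inequality. The real content is your parity observation: a vertex of degree at most $3$ forces $2e\le 4n-2$, so $e\le 2n-1$, and feeding this one extra unit into $\alpha\ge (9n-e-4)/26$ yields $\alpha\ge (7n-3)/26$, which strictly exceeds $n/4$ precisely when $n\ge 7$. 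Your inspection of $n\le 6$ (where $K_4$-freeness on $n\ge 4$ vertices forbids completeness, so $\alpha\ge 2 > n/4$) closes the range. One remark worth keeping in mind: deriving the 1976 Albertson--Bollob\'as--Tucker bound from the 1997 Locke--Lou theorem is historically inverted, but that does not affect validity here, since the paper takes both as known; your route buys a short explicit argument at the cost of leaning on the deeper of the two cited theorems.
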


The  value of $f(n,n/4)$ can  be determined for all $n$.

\begin{thm} \label{values} \hfill
\begin{enumerate}
\item
If $4 \nmid n$ then $f(n,n/4)=1$.
\item
$f(4,1)=f(8,2)=3$.
\item
  $f(4k,k)= 2$ for $k\geq3$.
  \end{enumerate}
\end{thm}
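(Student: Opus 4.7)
My plan is to handle the three claims separately, using Brooks's theorem for the bulk and Theorem~\ref{structural} for the main case of (3).

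For part (1), when $4 \nmid n$ the condition $\alpha(H_1 \cup H_2) \le n/4$ becomes $\alpha \le \lfloor n/4 \rfloor$. Since $H_1 \cup H_2$ is two-miltonian with $\Delta \le 4$, Brooks's theorem gives (excluding the small case $n = 5$, where $H_1 \cup H_2$ may be $K_5$ and which I would verify directly) $\chi(H_1 \cup H_2) \le 4$, hence $\alpha \ge \lceil n/4 \rceil > n/4$. So no two distinct Hamiltonian cycles can form a valid pair, and $f(n, n/4) = 1$.

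For $f(4, 1) = 3$ in part (2), $K_4$ has exactly three Hamiltonian cycles, all pairwise unions lie inside $K_4$ (with $\alpha = 1$), and no fourth Hamiltonian cycle exists. For $f(8, 2) = 3$, the lower bound will come from an explicit construction of three Hamiltonian cycles on $[8]$ whose pairwise unions have $\alpha = 2$; a natural starting point is the graph $C_8^2$ (the square of the 8-cycle), which is 4-regular with $\alpha = 2$ and decomposes into two Hamiltonian cycles, after which one hunts for a third cycle compatible with both. The upper bound will follow from a finite case analysis: a two-miltonian graph on 8 vertices with $\alpha \le 2$ has a complement that is triangle-free with $\delta \ge 3$, a small family, and no four pairwise-compatible Hamiltonian cycles can coexist within it.

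For part (3)'s lower bound I will take $H_1 = 1{-}2{-}\cdots{-}4k{-}1$, partition $[4k]$ into $k$ blocks $B_i$ of four consecutive vertices, and define $H_2$ so that on each $B_i$ it traces the Hamiltonian path of $K_4|_{B_i}$ complementary to $H_1$'s path, the block-paths chained via between-block edges into a Hamiltonian cycle; then $H_1 \cup H_2$ is $K_4$-covered and $\alpha(H_1\cup H_2)=k$. For the upper bound I will assume three distinct cycles $H_1, H_2, H_3$ with pairwise $\alpha \le k$; Brooks then forces equality. For $n > 12$, Theorem~\ref{structural} makes each $H_i \cup H_j$ $K_4$-covered with some partition $\mathcal{P}_{ij}$. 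The key observation is that within each $K_4$ of $\mathcal{P}_{ij}$, $H_i$ contributes exactly 3 edges forming a Hamiltonian path of the $K_4$, and these 3 edges must be cyclically consecutive in $H_i$ (the alternative forces too many ``gaps'' among the four vertices in $H_i$'s cyclic order), so each block of $\mathcal{P}_{ij}$ consists of four consecutive vertices along $H_i$ and along $H_j$. Since a Hamiltonian cycle on $4k$ vertices admits only four 4-block partitions, a short case analysis on how $\mathcal{P}_{12}, \mathcal{P}_{13}, \mathcal{P}_{23}$ relate yields contradictions—for example, if all three coincide, then inside each block the paths of $H_1, H_2, H_3$ are pairwise complementary in $K_4$, forcing $H_1=H_2=H_3$ on the block, absurd.

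The main obstacle will be the case $n = 12$ ($k = 3$) in part (3)'s upper bound: Theorem~\ref{structural} requires $n > 12$, so each $H_i \cup H_j$ need only satisfy $\zeta \ge 2$ (by Corollary~\ref{smooth} and the sharpness comment following Theorem~\ref{structural}). A direct enumeration of two-miltonian 12-vertex graphs with $\alpha = 3$ (including the exceptional $\zeta = 2$ configurations mentioned in the introduction) will be required, in order to combine with the $K_4$-covered analysis above and rule out three mutually compatible Hamiltonian cycles on 12 vertices.
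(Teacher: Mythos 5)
Your parts (1) and (2) and the lower bound in part (3) follow the paper essentially verbatim: Brooks for (1), enumeration plus an explicit third cycle on $8$ vertices for (2), and the strip construction for $f(4k,k)\ge 2$ (the paper also defers the verifications $f(8,2)\le 3$, $f(12,3)\le 2$ to a computer search, as you propose for $n=12$). One thing you spotted that the paper glosses over: for $n=5$, two edge-disjoint $5$-cycles have union $K_5$ with $\alpha=1\le 5/4$, so the statement $f(5,5/4)=1$ in part (1) is actually false as written. Your flag of the $n=5$ Brooks exception is correct, and ``verify directly'' would in fact uncover this.

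The real divergence, and the real gap, is in the upper bound of part (3) for $n\ge 16$. Your structural setup is sound: in a $K_4$-cover of $H_i\cup H_j$, each $K_4$ contains exactly $3$ edges of $H_i$ forming a Hamiltonian path (the alternatives would give a $4$-cycle in $H_i$ or a deficit of edges), so the four vertices are consecutive along $H_i$ and along $H_j$, $H_i$ and $H_j$ are edge-disjoint, and along a fixed cycle there are only four such block partitions. This is exactly the skeleton underneath the paper's Lemma~\ref{nothree}. But ``a short case analysis yields contradictions'' papers over the whole content of that lemma. The easy cases you can get cheaply: if $\mathcal{P}_{12}=\mathcal{P}_{13}$, then $H_2$ and $H_3$ agree on $3k$ block edges, so $|E(H_2\cup H_3)|\le 5k<6k$ and $H_2\cup H_3$ cannot be $K_4$-covered; if they are shifted by $1$ or $3$ along $H_1$, one can exhibit a shared edge between $H_2$ and $H_3$ (e.g.\ edge $24$ in the offset-$1$ case), contradicting the edge-disjointness you need. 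That leaves the shift-by-$2$ case, which is where all the work lives. In the paper this is handled by drawing the $C_2$-block-edges and the (shifted-by-$2$) $C_3$-block-edges on $12$ consecutive vertices of $C_1$, noting they form a $3$-regular subgraph of $C_2\cup C_3$, showing via Observation~\ref{greedy} that $\{5,6,9\}$ cannot be independent in $C_2\cup C_3$ (else one extends greedily to an independent set of size $>n/4$), deducing that $5$ and $9$ are joined in $C_3$, and then shifting by four to force a double edge at $5$ in $C_3$. None of that is foreshadowed by your ``for example, if all three coincide'' illustration, which handles only the trivial case; you would need a concrete mechanism for the offset-$2$ configuration, and it is not obvious that a partition-bookkeeping argument alone (without something like the paper's greedy-extension trick) closes it.

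A smaller point: in the all-coincide case you say this ``forces $H_1=H_2=H_3$ on the block''; what actually follows is that $H_2$ and $H_3$ both equal the unique complementary path of $H_1$, hence $H_2=H_3$ on the block, and the contradiction comes from $H_2,H_3$ then failing to be complementary to each other. The conclusion is the same but the wording is off.
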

\begin{proof}

Part (1)  follows from Brooks' theorem.
The following figure
shows that for every $k$ there exists a two-miltonian graph $G$ with $n=4k$ and $\alpha(G) = \zeta(G)=k$. This means that $f(4k,k) \ge 2$.

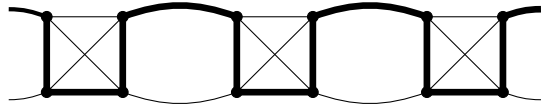
\begin{figure}[htbp,scale=0.5]
\begin{center}
\begin{tikzpicture}
\filldraw[black] (0,0) circle (2pt);
\filldraw[black] (1,0) circle (2pt);
\filldraw[black] (0,1) circle (2pt);
\filldraw[black] (1,1) circle (2pt);
\filldraw[black] (2.5,0) circle (2pt);
\filldraw[black] (3.5,0) circle (2pt);
\filldraw[black] (2.5,1) circle (2pt);
\filldraw[black] (3.5,1) circle (2pt);
\filldraw[black] (5,0) circle (2pt);
\filldraw[black] (6,0) circle (2pt);
\filldraw[black] (5,1) circle (2pt);
\filldraw[black] (6,1) circle (2pt);
\draw[thin] (-0.5,-0.1) to[out=0,in=200] (0,0);
\draw[ultra thick] (-0.5,1.1) to[out=0,in=160] (0,1);
\draw[line width =2.5pt] (0,0) -- (0,1);
\draw[line width =2.5pt] (0,0) -- (1,0);
\draw[thin] (0,0) -- (1,1);
\draw[thin] (0,1) -- (1,0);
\draw[thin] (0,1) -- (1,1);
\draw[line width =2.5pt] (1,0) -- (1,1);
\draw[thin] (1,0) to[out=-20,in=200] (2.5,0);
\draw[line width =2.5pt] (1,1) to[out=20,in=160] (2.5,1);
\draw[line width =2.5pt] (2.5,0) -- (3.5,0);
\draw[line width =2.5pt] (2.5,0) -- (2.5,1);
\draw[thin] (2.5,0) -- (3.5,1);
\draw[thin] (3.5,0) -- (2.5,1);
\draw[line width =2.5pt] (3.5,0) -- (3.5,1);
\draw[thin] (2.5,1) -- (3.5,1);
\draw[thin] (3.5,0) to[out=-20,in=200] (5,0);
\draw[line width =2.5pt] (3.5,1) to[out=20,in=160] (5,1);
\draw[line width =2.5pt] (5,0) -- (6,0);
\draw[line width =2.5pt] (5,0) -- (5,1);
\draw[thin] (5,0) -- (6,1);
\draw[thin] (6,0) -- (5,1);
\draw[line width =2.5pt] (6,0) -- (6,1);
\draw[thin] (5,1) -- (6,1);
\draw[thin] (6,0) to[out=-20,in=180] (6.5,-0.1);
\draw[line width =2.5pt] (6,1) to[out=20,in=180] (6.5,1.1);
\end{tikzpicture}
\caption{The two strips eventually close on themselves.}
\label{covered}
\end{center}
\end{figure}

 Since there are only three distinct Hamiltonian cycles on $4$ vertices, the fact that $f(4,1)= 3$ is easy. Figure~\ref{3example} is an example of three Hamiltonian cycles on the same vertex set of size $8$, having each pairwise union $K_4$ -covered, thus $\alpha(C_i \cup C_j)=2$ for $1 \le i <j \le 3$,  showing that $f(8,2)\ge 3$.

 \begin{figure}[htbp]
 \begin{tikzpicture}
\filldraw[black] (1,1) circle (2pt);
\filldraw[black] (1,2) circle (2pt);
\filldraw[black] (2,1) circle (2pt);
\filldraw[black] (2,2) circle (2pt);
\filldraw[black] (0,0) circle (2pt);
\filldraw[black] (3,0) circle (2pt);
\filldraw[black] (0,3) circle (2pt);
\filldraw[black] (3,3) circle (2pt);
\draw (-0.5,-0.5) -- (0,0);
\draw (0,0) -- (0,3);
\draw (0,3) -- (1,1);
\draw (1,1) -- (1,2);
\draw (1,2) -- (2,1);
\draw (2,1) -- (2,2);
\draw (2,2) -- (3,0);
\draw (3,0) -- (3,3);
\draw (3,3) -- (3.5,3.5);
\end{tikzpicture}
\begin{tikzpicture}
\filldraw[black] (1,1) circle (2pt);
\filldraw[black] (1,2) circle (2pt);
\filldraw[black] (2,1) circle (2pt);
\filldraw[black] (2,2) circle (2pt);
\filldraw[black] (0,0) circle (2pt);
\filldraw[black] (3,0) circle (2pt);
\filldraw[black] (0,3) circle (2pt);
\filldraw[black] (3,3) circle (2pt);
\draw (0,0) -- (1,1);
\draw (1,1) -- (3,0);
\draw (3,0) -- (2,1);
\draw (2,1) -- (3,3);
\draw (3,3) -- (2,2);
\draw (2,2) -- (0,3);
\draw (0,3) -- (1,2);
\draw (1,2) -- (0,0);
\node[]() at (1,-0.3){};
\end{tikzpicture}
\begin{tikzpicture}
\filldraw[black] (1,1) circle (2pt);
\filldraw[black] (1,2) circle (2pt);
\filldraw[black] (2,1) circle (2pt);
\filldraw[black] (2,2) circle (2pt);
\filldraw[black] (0,0) circle (2pt);
\filldraw[black] (3,0) circle (2pt);
\filldraw[black] (0,3) circle (2pt);
\filldraw[black] (3,3) circle (2pt);
\draw (0,0) -- (2,1);
\draw (2,1) -- (1,1);
\draw (1,1) -- (2,2);
\draw (2,2) -- (1,2);
\draw (1,2) -- (3,3);
\draw (3,3) -- (0,3);
\draw (0,3) -- (-0.5,3.5);
\draw (3.5,-0.5) -- (3,0);
\draw (3,0) -- (0,0);
\end{tikzpicture}
\caption{The vertices are identified by horizontal shifting, showing that $f(8,2)\ge 3$}
\label{3example}
\end{figure}
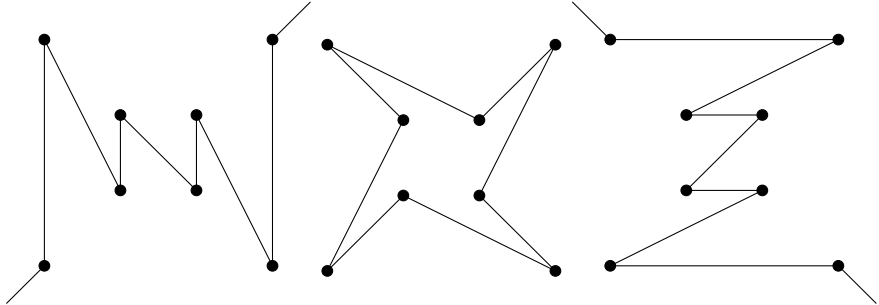

Each pair  of dangling edges in the two extreme cycles are meant to join to form one edge. For our next arguments we will need the  following lemma.

\begin{lem} \label{nothree}
If $4 \mid n$ and $n \geq 12$ then there do not exist three Hamiltonian cycles  $C_1,C_2,C_3$ such that  $C_i \cup C_j$ is
 $K_4$-covered for all pairs $1 \le i <j \le 3$.
 \end{lem}
\begin{proof}
Assume for contradiction that we do have three such cycles. Enumerate the vertices $1, \ldots ,12$ so that
$i(i+1)\in E(C_1)$ for all $i \le 12$ (cyclical counting), and $1,2,3,4$ form a $K_4$ in $C_1 \cup C_2$.
  Then the  edges of $C_2$ inside the three  $K_4$s forming $C_1 \cup C_2$ must be as in Figure~\ref{covered1}.


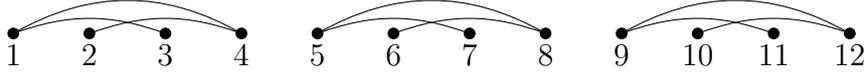
\begin{figure}[htbp] 
\begin{center}
\begin{tikzpicture}
\filldraw[black] (1,0) circle (2pt)node[anchor=north] {$1$};
\filldraw[black] (2,0) circle (2pt)node[anchor=north] {$2$};
\filldraw[black] (3,0) circle (2pt)node[anchor=north] {$3$};
\filldraw[black] (4,0) circle (2pt)node[anchor=north] {$4$};
\filldraw[black] (5,0) circle (2pt)node[anchor=north] {$5$};
\filldraw[black] (6,0) circle (2pt)node[anchor=north] {$6$};
\filldraw[black] (7,0) circle (2pt)node[anchor=north] {$7$};
\filldraw[black] (8,0) circle (2pt)node[anchor=north] {$8$};
\filldraw[black] (9,0) circle (2pt)node[anchor=north] {$9$};
\filldraw[black] (10,0) circle (2pt)node[anchor=north] {$10$};
\filldraw[black] (11,0) circle (2pt)node[anchor=north] {$11$};
\filldraw[black] (12,0) circle (2pt)node[anchor=north] {$12$};
\draw (1,0) to[out=20,in=160] (3,0); 
\draw (1,0) to[out=30,in=150] (4,0);
\draw (2,0) to[out=20,in=160] (4,0);
\draw (5,0) to[out=20,in=160] (7,0);
\draw (5,0) to[out=30,in=150] (8,0);
\draw (6,0) to[out=20,in=160] (8,0);
\draw (9,0) to[out=20,in=160] (11,0);
\draw (9,0) to[out=30,in=150] (12,0);
\draw (10,0) to[out=20,in=160] (12,0);
\end{tikzpicture}
\caption{The edges of $C_2$ that form the $K_4$s in $C_1 \cup C_2$ on the twelve vertices that we focus on.}
\label{covered1}
\end{center}
\end{figure}

Degree considerations and the fact that $C_2$ is Hamiltonian yield that it is necessarily edge disjoint from $C_1$. In general,
all three cycles are  edge disjoint.

Since $C_1 \cup C_3$ is also $K_4$-covered, we can also draw the edges of $C_3$ that form the $K_4$s in $C_1 \cup C_3$ on the same vertex set: $\{1,\ldots, 12\}$. This must be very similar to Figure~\ref{covered1}, but it might be shifted as we cannot assume that the vertices $1,2,3,4$ form a $K_4$ in $C_1 \cup C_3$. Moreover, since we already know that the cycles are edge disjoint, it should be shifted by exactly two vertices. Figure~\ref{covered2} describes the union of the edges of $C_2$ and $C_3$ that form the $K_4$s in their union with $C_1$. This is also a subgraph of $C_2 \cup C_3$.


\begin{figure}[htbp] 
\begin{center}
\begin{tikzpicture}
\filldraw[black] (1,0) circle (2pt)node[anchor=south] {$1$};
\filldraw[black] (2,0) circle (2pt)node[anchor=north] {$2$};
\filldraw[black] (3,0) circle (2pt)node[anchor=north] {$3$};
\filldraw[black] (4,0) circle (2pt)node[anchor=south] {$4$};
\filldraw[black] (5,0) circle (2pt)node[anchor=south] {$5$};
\filldraw[black] (6,0) circle (2pt)node[anchor=north] {$6$};
\filldraw[black] (7,0) circle (2pt)node[anchor=north] {$7$};
\filldraw[black] (8,0) circle (2pt)node[anchor=south] {$8$};
\filldraw[black] (9,0) circle (2pt)node[anchor=south] {$9$};
\filldraw[black] (10,0) circle (2pt)node[anchor=north] {$10$};
\filldraw[black] (11,0) circle (2pt)node[anchor=north] {$11$};
\filldraw[black] (12,0) circle (2pt)node[anchor=south] {$12$};
\draw (1,0) to[out=20,in=160] (3,0); 
\draw (1,0) to[out=30,in=150] (4,0);
\draw (2,0) to[out=20,in=160] (4,0);
\draw (5,0) to[out=20,in=160] (7,0);
\draw (5,0) to[out=30,in=150] (8,0);
\draw (6,0) to[out=20,in=160] (8,0);
\draw (9,0) to[out=20,in=160] (11,0);
\draw (9,0) to[out=30,in=150] (12,0);
\draw (10,0) to[out=20,in=160] (12,0);
\draw (3,0) to[out=-20,in=-160] (5,0); 
\draw (3,0) to[out=-30,in=-150] (6,0);
\draw (4,0) to[out=-20,in=-160] (6,0);
\draw (7,0) to[out=-20,in=-160] (9,0);
\draw (7,0) to[out=-30,in=-150] (10,0);
\draw (8,0) to[out=-20,in=-160] (10,0);

\filldraw[black] (12.15,-0.3) circle (0.8pt);
\filldraw[black] (12,-0.3) circle (0.8pt);
\filldraw[black] (11.85,-0.3) circle (0.8pt);

\filldraw[black] (1.15,-0.3) circle (0.8pt);
\filldraw[black] (1,-0.3) circle (0.8pt);
\filldraw[black] (0.85,-0.3) circle (0.8pt);

\end{tikzpicture}
\caption{A subgraph of $C_2 \cup C_3$.}
\label{covered2}
\end{center}
\end{figure}
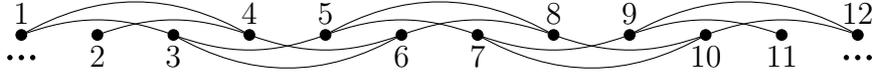

The edges in Figure \ref{covered2} form a $3$-regular subgraph of $C_2 \cup C_3$, thus every vertex has an additional neighbor (since $C_2$ is edge disjoint from $C_3$). Observe that if the vertices labeled $5,6,9$ form an independent set, their neighborhood is of size at most $9$ in $C_2 \cup C_3$ and by Observation \ref{greedy} we can enlarge it to an independent set of size more than $n/4$ in $C_2 \cup C_3$, a contradiction. Thus there must be an edge connecting some of the vertices labeled $5,6,9$. In Figure \ref{covered2} the vertices $5,9$ already have two edges from $C_2$, and the vertex $6$ has two edges from $C_3$, so no edge can connect $6$ to $5$ or $9$. Thus $5$ and $9$ must be connected (by an edge in $C_3$). But then by shifting the whole argument to the left by four, we get that $1$ should be also connected to $5$ in $C_3$, which contradicts the fact that $C_3$ is a cycle.
\end{proof}

The proof that $ f(8,2)\le 3$ and $f(12,3)\leq 2$ can be done by computer.
It  remains to be shown that   $f(n,n/4) \le 2$ when $n$ is divisible by four and $n \geq 16 $. Assume for contradiction that $f(n,n/4) \geq 3$ thus there exist three Hamiltonian cycles   $C_1,C_2,C_3$  on $n=4k$ vertices, such that
 $\alpha(C_i \cup C_j)=\frac{n}{4}$ whenever $1 \le i<j \le 3$.
Then each union  $C_i \cup C_j$ contains a copy of $K_4$, since otherwise
 by Theorem \ref{7/26} there exists  an independent set of size $(7n-4)/26$, which is strictly larger than $n/4$ when $n \geq 16$. We next show that  $C_i \cup C_j$ not only contains a single $K_4$, but it is $K_4$-covered.
Assuming that this is not the case, since there is at least one $K_4$ in $G$, by Lemma \ref{technical} there exists a $K_4$-free nonempty subgraph $H$ that has at least one vertex of degree at most three. But then Theorem \ref{stoneage} yields an independent set in $H$ strictly larger than $|V(H)|/4$, and by Lemma \ref{technical} we can enlarge it to an independent set of size more than $n/4$ in $C_i \cup C_j$. Thus $C_i \cup C_j$ must be $K_4$-covered, but this contradicts Lemma~\ref{nothree} and the proof is complete.
\end{proof}

\begin{remark}
Supopse that we are interested in the maximal number of Hamiltonian paths (instead of Hamiltonian cycles) with the property that the union of any two has independence number at most $n/4$. It can be proven that when $n$ is divisible by four we can have at most two Hamiltonian paths (and we can have two, see Figure \ref{covered} without the strips closing on themselves) and otherwise we can only have a single one by the usual Brooks reasoning. In this context, $n=8,12$ are exceptional only because we are interested in Hamiltonian cycles instead of paths.
 \end{remark}

\section{A lower bound on $c_t$}\label{sec:lowerbounds}

We will use the following observation.

\begin{claim}\label{csoka} \cite{csoka}
Let $y$ be a vertex of $G$ with exactly two neighbors $x$ and $z$ such that $x$ and $z$ are not connected. Let $G'$ be a graph defined by  $V(G')=V(G)\setminus \{x,y,z\} \cup \{v\}$ where $v$  is a new vertex connected to all remaining neighbors of $x$ and $z$. Thus $\text{deg}(v)=\text{deg}(x)+\text{deg}(z)-2$. Then for any independent set $I'$ of $G'$, we can construct an independent set $I$ of $G$ of such that $|I|=|I'|+1$.
\end{claim}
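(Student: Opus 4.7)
The plan is to proceed by a straightforward case analysis on whether the new vertex $v$ lies in the given independent set $I'$. In each case I will exhibit an explicit independent set of $G$ of size $|I'|+1$, either by adding the ``free'' vertex $y$ to $I'$, or by trading the single vertex $v$ for the pair $\{x,z\}$.

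First I would handle the case $v \notin I'$. Since $V(G')\setminus\{v\}=V(G)\setminus\{x,y,z\}$, the set $I'$ is a subset of $V(G)$ that avoids both neighbors of $y$ in $G$. Therefore $I:=I'\cup\{y\}$ is independent in $G$ and has the desired size $|I'|+1$; note the hypothesis that $y$ has exactly the two neighbors $x,z$ in $G$ is exactly what is needed here.

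Next, if $v\in I'$, I would set $I:=(I'\setminus\{v\})\cup\{x,z\}$. The key observation is that by the construction of $G'$, the $G'$-neighbors of $v$ are precisely the $G$-neighbors of $x$ and $z$ other than $y$ itself. Since $I'$ is independent in $G'$, the set $I'\setminus\{v\}$ contains none of these vertices, so adding $x$ and $z$ creates no edge with the rest of $I$. The edge $xz$ is absent by hypothesis, and $y\notin I$, so $I$ is indeed independent in $G$, with $|I|=|I'|-1+2=|I'|+1$.

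There is essentially no obstacle beyond making sure the bookkeeping for the neighborhoods is correct: one must remember that $y$ is deliberately excluded from $V(G')$ so that the neighbors of $v$ in $G'$ are exactly $(N_G(x)\cup N_G(z))\setminus\{x,y,z\}$, which is what makes the swap in the second case legal. Combining the two cases completes the proof.
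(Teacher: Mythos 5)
Your case analysis is exactly the paper's proof: if $v\notin I'$ take $I=I'\cup\{y\}$, and if $v\in I'$ take $I=(I'\setminus\{v\})\cup\{x,z\}$. You have simply spelled out the neighborhood bookkeeping that the paper leaves implicit; the argument is correct and identical in substance.
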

\begin{proof}
If $v \in I'$ then $I=I' \setminus \{v\} \cup \{x,z\} $. If $v \notin I'$ then $I = I' \cup \{y\}$.
\end{proof}

\begin{notation}
 We call a subgraph of
 $G$  {\em good } if it is an induced path of length three
 We write $\psi(G)$ for the maximal number of vertex disjoint good subgraphs in $G$.
\end{notation}

In a two-miltonian graph all copies of $K_4$ are vertex disjoint, so
$\zeta$ is the number of disjoint copies of $K_4$.

\begin{observation}\label{psizeta}
Let $C, D_1, D_2$  be  Hamiltonian cycles, and let $m$ be the number of $K_4$s that are contained in both $C \cup D_1$ and $C \cup D_2$. Then
$\psi(D_1 \cup D_2) \ge m$.
\end{observation}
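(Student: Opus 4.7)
The plan is to show that every $K_4$ lying in both $C \cup D_1$ and $C \cup D_2$ gives rise, on its own vertex set, to an induced $P_4$ in $D_1 \cup D_2$, and that these $m$ four-vertex sets are pairwise disjoint.

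The first step is a local structural claim: if $\{a,b,c,d\}$ induces a $K_4$ in $C \cup D_i$ (where $C \cup D_i$ is two-miltonian, so we may assume $n\ge 5$; the case $n=4$ being trivial), then both $C$ and $D_i$ restrict to a Hamiltonian path on $\{a,b,c,d\}$, and these two paths are edge-disjoint. Indeed, the restriction of any Hamiltonian cycle to four vertices of an $n$-set with $n\ge 5$ is a disjoint union of paths, hence has at most $3$ edges; since the union of the two restrictions must equal $K_4$ (six edges), each must contribute exactly three edges, forming a Hamiltonian path on $\{a,b,c,d\}$.

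The second, decisive step is uniqueness. Assume the path provided by $C$ is $a\text{-}b\text{-}c\text{-}d$. Then the complementary three edges of $K_4$ are $\{ac, ad, bd\}$, and checking the degree sequence $(2,1,1,2)$ inside $\{a,b,c,d\}$ one sees that these edges form a unique Hamiltonian path $c\text{-}a\text{-}d\text{-}b$. Applying this both to $D_1$ and $D_2$, I conclude that the edges of $D_1$ on $\{a,b,c,d\}$ coincide with those of $D_2$ on $\{a,b,c,d\}$, namely $\{ac,ad,bd\}$. Hence the subgraph of $D_1 \cup D_2$ induced on $\{a,b,c,d\}$ is exactly the path $c\text{-}a\text{-}d\text{-}b$, i.e.\ a good subgraph.

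Finally, in a two-miltonian graph the maximum degree is $4$, which forces any two $K_4$s to be vertex-disjoint (two $K_4$s sharing a vertex would give it degree at least $5$). In particular, the $m$ common $K_4$s occupy pairwise disjoint $4$-vertex sets, and the corresponding induced $P_4$s in $D_1 \cup D_2$ are vertex-disjoint good subgraphs, yielding $\psi(D_1 \cup D_2)\ge m$.

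There is no real obstacle: the one thing to be careful about is the edge-count argument in the first step, which uses $n\ge 5$ so that the restriction of a Hamiltonian cycle to four vertices is a strict sub-path-system (at most three edges). Once that is observed, the rest is the uniqueness of the completion path and the vertex-disjointness of $K_4$s under $\Delta\le 4$.
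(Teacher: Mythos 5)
Your proof is correct and takes essentially the same approach as the paper: both observe that in each common $K_4$, the three edges not belonging to $C$ form a $P_4$ that must lie in both $D_1$ and $D_2$, and then use $\Delta\le 4$ to get vertex-disjointness. Your write-up just fills in the edge-count and uniqueness details that the paper leaves implicit.
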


\begin{proof}
In every $K_4$ contained in both $C \cup D_1$ and $C \cup D_2$ the edges  that do not belong to $C$ form a path of length $3$  in  both $D_1$ and $D_2$.
\end{proof}

\begin{lem} \label{quality_indep}
If $G$ is $2$-miltonian then

$$\alpha(G) \geq   \frac{7}{26}n- \frac{1}{13}\zeta+\frac{1}{2}\psi -O(1)$$

\end{lem}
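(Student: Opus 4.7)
The plan is to upgrade the proof of Corollary \ref{smooth} by performing one fold via Claim \ref{csoka} per vertex-disjoint good subgraph before invoking the sharp Locke--Lou bound, Theorem \ref{notevenmyfinalform}. Corollary \ref{smooth} already delivers the $\tfrac{7}{26}n-\tfrac{1}{13}\zeta-O(1)$ part of the bound via Lemma \ref{technical} and Theorem \ref{7/26}, so the extra $\tfrac{1}{2}\psi$ should materialise through the $\psi$ folds, each of which removes two vertices from the ambient $K_4$-free graph and contributes $+1$ to the independence number.

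Concretely, I would first fix a maximum vertex-disjoint collection of good subgraphs $P_1,\dots,P_\psi$, with $P_i=v_1^iv_2^iv_3^iv_4^i$, and apply Lemma \ref{technical} to obtain a connected $K_4$-free graph $H$ on $|V(H)|=n-4\zeta$ vertices with $\Delta(H)\le 4$ and with the extension property $\alpha(G)\ge\alpha(H)+\zeta$. Next, for each $P_i$ I would apply Claim \ref{csoka} in $H$, pivoting at $y=v_2^i$ with $x=v_1^i$, $z=v_3^i$; the pair $v_1^i,v_3^i$ is non-adjacent because $P_i$ is an induced path in $G$ (and one checks that the construction of $H$ in the proof of Lemma \ref{technical} does not introduce the edge $v_1^iv_3^i$). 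After $\psi$ such folds we arrive at a graph $H^*$ with $|V(H^*)|=n-4\zeta-2\psi$ and $\alpha(H)\ge\alpha(H^*)+\psi$. Applying Theorem \ref{notevenmyfinalform} to $H^*$ gives $\alpha(H^*)\ge(9|V(H^*)|-e(H^*)-4)/26$; chaining with $\alpha(G)\ge\zeta+\psi+\alpha(H^*)$ and an edge-count bound that incorporates the $\ge 2+s_i$ edges lost in each fold (where $s_i$ is the number of common neighbours of $v_1^i,v_3^i$ other than $v_2^i$), a routine simplification yields the inequality $\alpha(G)\ge\tfrac{7}{26}n-\tfrac{1}{13}\zeta+\tfrac{1}{2}\psi-O(1)$.

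The main obstacle is that Claim \ref{csoka} requires the pivot $y$ to have degree exactly two, whereas the middle vertex of a good subgraph in $G$, and hence in $H$, may have degree up to four. The technical heart of the proof is therefore to arrange that each $v_2^i$ has degree two at the moment of its fold---either by restricting to good subgraphs whose middle vertex already has degree two in $H$, or by preliminarily trimming ``extra'' edges at $v_2^i$ (those added to $H$ purely for connectivity in the proof of Lemma \ref{technical}, rather than inherited from $G'$) while preserving $K_4$-freeness, connectivity, and $\Delta\le 4$. A second technicality is verifying that the $\psi$ sequential folds do not raise $\Delta(H^*)$ above four (which forces the fold to satisfy $\deg_H(v_1^i)+\deg_H(v_3^i)\le 6$), and that the accumulated edge savings are sufficient to recover the full $\tfrac{\psi}{2}$ rather than the weaker $\tfrac{6\psi}{13}$ that comes from the vertex count alone.
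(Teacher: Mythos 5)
Your core toolkit is the right one --- Claim \ref{csoka}, Lemma \ref{technical}, and the refined Locke--Lou bound of Theorem \ref{notevenmyfinalform} --- but you apply Lemma \ref{technical} and the Cs\'oka folds in the \emph{reverse} order from the paper, and that reversal is not a technicality to be patched at the end; it is where the argument breaks.

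The paper performs the $\psi$ folds on $G=C_1\cup C_2$ itself first, producing a graph $G_2$ that is \emph{still two-miltonian} (the two interior vertices of each good path, as in Figure~\ref{finish him}, have degree two, so the fold contracts a common subpath of both Hamiltonian cycles), and only then invokes Lemma \ref{technical} on $G_2$. You propose to reduce $G$ to the $K_4$-free graph $H$ first, and fold inside $H$. This runs into obstacles that are fatal, not cosmetic. Folding inside a $K_4$-free graph can \emph{create} a $K_4$: the new vertex $v$ becomes adjacent to every former neighbour of $v_1^i$ and $v_3^i$, and three of these may already span a triangle in $H$; once that happens, Theorem \ref{notevenmyfinalform} no longer applies, and you cannot re-invoke Lemma \ref{technical} because $H^*$ is not two-miltonian. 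Independently, $\Delta(H^*)$ can exceed $4$ (you notice this), and the path $P_i$ may not even survive the passage to $H$: one of its vertices could lie in a copy of $K_4$ in $G$ and be deleted, and the edges Lemma \ref{technical} inserts inside the neighbourhood of a removed archipelago can land among the $v_j^i$ and destroy inducedness. Your ``one checks'' conceals precisely the verifications that the paper's ordering makes unnecessary: after folding, $G_2$ has $\Delta\le 4$ and is two-miltonian for free, and $K_4$-freeness of $H$ is then handed to you by Lemma \ref{technical}.

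On the arithmetic you are right that Theorem \ref{7/26} alone would only give the coefficient $\tfrac{6}{13}$ on $\psi$; the paper reaches $\tfrac{1}{2}$ by using Theorem \ref{notevenmyfinalform} together with the observation that each fold leaves two vertices of degree at most $3$ in $H$ (the new vertex $v_i$ and the surviving endpoint of the good path), so that $2|E(H)|\le 4|V(H)|-2\psi$. Your ``edge savings'' heuristic is pointing at the same place, but it is applied to the wrong graph in the wrong order.
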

\begin{proof}

Let  $e=|E(G)|$.
By Theorem \ref{notevenmyfinalform}
$e-9n+26\alpha(G) \geq -4$.

Let $G_1 := C_1 \cup C_2$. Let $T$ be a set of disjoint good subgraphs
 of  size $\psi$.
  For each path  $P_i\in T$,  using the notation of  Figure \ref{finish him}  below, we apply the operation described in
 Claim \ref{csoka}, of removing $x,y,z$ and adding a vertex $v=v_i$ connected to the remaining neighbors of $x,z$. Let $G_2$ be the graph obtained by combining all these $\psi$ operations. Then $|V(G_2)| =n-2\psi$.

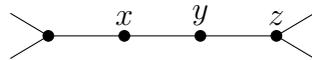
\begin{figure}[htbp,scale=0.5]
\begin{center}
\begin{tikzpicture}
\filldraw[black] (0,0) circle (2pt);
\filldraw[black] (1,0) circle (2pt)node[anchor=south] {$x$};
\filldraw[black] (2,0) circle (2pt)node[anchor=south] {$y$};
\filldraw[black] (3,0) circle (2pt)node[anchor=south] {$z$};
\draw (-0.5,0.3) -- (0,0);
\draw (-0.5,-0.3) -- (0,0);
\draw (0,0) -- (1,0);
\draw (1,0) -- (2,0);
\draw (2,0) -- (3,0);
\draw (3,0) -- (3.5,0.3);
\draw (3,0) -- (3.5,-0.3);
\end{tikzpicture}
\caption{A good subgraph in $C_1 \cup C_2.$}
\label{finish him}
\end{center}
\end{figure}

 Observe that $G_2$ is still two-miltonian and that $\zeta(G_2)=\zeta(G)$.
  Let $H$ be the graph obtained from $G_2$ using  Lemma \ref{technical}. Then  $H$ is a subgraph of $G_2$,
   it is simple, connected, and $K_4$-free. We have $|V(H)|=n-4\zeta-2\psi$, and since every vertex $v_i$, and its unnamed neighbor in Figure \ref{finish him} has degree at most $3$  in $H$, we have
   $$2|E(H)|\leq 4((n-4\zeta-2\psi)-2\psi) +6\psi=4n-16\zeta -10\psi.$$

    Thus using the inequality in the second remark of Theorem \ref{notevenmyfinalform} we get that

$$\alpha(H) \geq  \frac{9|V(H)|-|E(H)|-4}{26} \geq  \frac{9(n-4\zeta-2\psi)-(2n-8\zeta-5\phi)}{26} -\frac{4}{26} =$$
$$= \frac{7}{26}n-\frac{28}{26}\zeta-\frac{13}{26}\psi -O(1)=\frac{7}{26}n-\frac{14}{13}\zeta-\frac{1}{2}\psi-O(1).$$

By Lemma \ref{technical} we can enlarge this independent set to an independent set of $G_2$ of size

$$\alpha(G_2)\geq \frac{7}{26}n- \frac{1}{13}\zeta-\frac{1}{2}\psi -O(1). $$

By Claim \ref{csoka} we have an independent set in $G_1$ of size

$$ \alpha(G_1) \geq \frac{7}{26}n- \frac{1}{13}\zeta+\frac{1}{2}\psi -O(1) $$

finishing the proof.
\end{proof}


\begin{lem} \label{johnson}
Let $\varepsilon>0$ be fixed and $S=\{S_1, \ldots, S_m\}$ be a set system on a ground set of size $n$ with the following properties.

\begin{itemize}
\item $\forall i: $ $|S_i| \geq xn$
\item $\forall i\neq j: $ $(1-\varepsilon)x^2n \geq|S_i \cap S_j|$
\end{itemize}
Then $m$ is bounded by a number independent of $n$: $$m \leq
q(x,\varepsilon)= \frac{1-x(1-\varepsilon)}{x\varepsilon}.$$
\end{lem}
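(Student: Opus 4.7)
The plan is to prove the bound by a double counting / Cauchy--Schwarz argument applied to the "degree" function
\[
d(v) := |\{i : v \in S_i\}|, \qquad v \in [n].
\]
First I would translate the two hypotheses into statements about $d(v)$. Summing the set sizes gives
\[
T := \sum_v d(v) = \sum_i |S_i| \geq mxn,
\]
while counting pairs yields the identity
\[
\sum_v \binom{d(v)}{2} = \sum_{i<j} |S_i \cap S_j| \leq \binom{m}{2}(1-\varepsilon)x^2 n.
\]

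Next I would apply Cauchy--Schwarz to obtain $\sum_v d(v)^2 \geq T^2/n$, and then invoke the algebraic identity $\sum_v d(v)^2 = T + 2\sum_v \binom{d(v)}{2}$. Combined with the upper bound on pair-intersections, this gives
\[
\frac{T^2}{n} \leq T + m(m-1)(1-\varepsilon)x^2 n.
\]
Since $f(T) = T^2/n - T$ is increasing for $T \geq n/2$, assuming $mx \geq 1/2$ (the complementary case $m < 1/(2x)$ yields a bound already at least as strong as the one sought, after a short sanity check) I can substitute $T \geq mxn$ to obtain
\[
m^2 x^2 n - mxn \leq m(m-1)(1-\varepsilon)x^2 n.
\]

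Dividing through by $mxn$ and rearranging gives $mx - 1 \leq (m-1)(1-\varepsilon)x$, which simplifies to $mx\varepsilon \leq 1 - x(1-\varepsilon)$, yielding the stated bound $m \leq \frac{1-x(1-\varepsilon)}{x\varepsilon} = q(x,\varepsilon)$. The only subtle point is the edge-case analysis when $T$ is small, but here no real obstacle arises: if $mx < 1/2$ then $m < 1/(2x)$, and an elementary comparison shows $1/(2x) \leq q(x,\varepsilon)$ for $0 < x \leq 1$ and $0 < \varepsilon \leq 1$. So the entire argument reduces to a clean convexity/Cauchy--Schwarz estimate, with no serious combinatorial obstacle.
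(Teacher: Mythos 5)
Your proof is correct and takes essentially the same approach as the paper: both double-count incidences via the degree function $d(v)=|\{i:v\in S_i\}|$ and then apply a convexity estimate to the degree sequence. The paper packages the convexity step as Jensen's inequality on $\binom{l_i}{2}$ in a probabilistic "random pair of sets" framing, whereas you use Cauchy--Schwarz on $\sum d(v)^2$ and then handle the small-$mx$ edge case explicitly, but the underlying calculation is the same.
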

\begin{proof}

Let $z=xm$. Let $X_1$ and $X_2$ be two uniformly randomly and
independently chosen sets from $S$. Let us denote by $l_i$ the
number of sets in $S$ which contain the element $i$. Now we have
that:

$$ \mathbb{E}(|X_1 \cap X_2|)= \sum_{i=1}^n\mathbb{P}(i \in X_1 \cap X_2)=\sum_{i=1}^n \frac{\binom{l_i}{2}}{\binom{m}{2}} $$

Since the average of the $l_i$ is exactly $z$, and the function $\frac{x(x-1)}{2}$ is convex, by Jensen's inequality we have the following

$$ \sum_{i=1}^n \frac{\binom{l_i}{2}}{\binom{m}{2}} \geq n \frac{\binom{z}{2}}{\binom{m}{2}}=n\frac{z(z-1)}{\frac{z}{x}\left(\frac{z}{x}-1\right)} =n \frac{z-1}{\frac{z}{x^2}-\frac{1}{x}}=nx^2 \frac{z-1}{z-x}. $$

Elementary calculation yields that the inequality

$$\frac{z-1}{z-x}\leq (1-\varepsilon)$$

holds if and only if $ m=\frac{z}{x} \leq \frac{1-x(1-\varepsilon)}{x\varepsilon}$, finishing the proof.

\end{proof}

\begin{remark}
If in Lemma \ref{johnson} we  replace $(1-\varepsilon)x^2n$ by
$(1+\varepsilon)x^2n$, we can construct set systems of exponential
size by a uniform random construction.
\end{remark}

For $0<x \le 1$ and $\varepsilon >0$  let $$\delta(x,\varepsilon)= \left(q\left(\frac{x}{4},\varepsilon \right)+1\right)^{-1} =\left(\frac{4-x(1-\varepsilon)}{x\varepsilon}+1\right)^{-1}.$$

\begin{lem} \label{iterating} For every $0<x \le 1$ and $\varepsilon >0$
there exists a number $\theta(x,\varepsilon)$ such that
if $k > \theta(x,\varepsilon)$ and
 $X=\{C_1 , \ldots , C_{k}\}$ is a collection of Hamiltonian cycles satisfying
 $\frac{xn}{4} \leq \zeta(C_i \cup C_j)$
  whenever $1 \leq i < j \leq k$
 then there exists a subcollection $X_2 \subseteq X$ of size at least $|X|^{\delta(x,\varepsilon)}$, such that $\frac{(1-\varepsilon)x^2n}{16}\leq \psi(C_k \cup C_l)$  for every pair of cycles $C_k,C_l \in X_2$.

\end{lem}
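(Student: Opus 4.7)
The plan is to fix a reference cycle $C_1 \in X$ and, for each other cycle $C_j$, encode the $K_4$'s of $C_1 \cup C_j$ as a subset of the fixed ground set $[n]$, so that Lemma~\ref{johnson} applies directly and then a Ramsey-type extraction delivers the clique $X_2$.

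Re-index the vertices so that $C_1$ is the cyclic sequence $v_1 v_2 \cdots v_n$. First I would show by a short degree-counting case analysis that, for $n \ge 5$, the four vertices of every $K_4$ of $C_1 \cup C_j$ are consecutive on $C_1$: each such vertex has degree two in each of $C_1, C_j$, degree three within the $K_4$ and hence degree one outside, so the numbers $a$ and $b$ of $C_1$- and $C_j$-edges inside the $K_4$ satisfy $a+b=6$ and $a,b\in\{2,3,4\}$. The extreme cases $(4,2)$ and $(2,4)$ force a $4$-cycle entirely inside one Hamiltonian cycle, which is impossible for $n \ge 5$, so $a=b=3$; an elementary check then shows that the three $C_1$-edges inside the $K_4$ form a path, i.e. the four $K_4$-vertices are consecutive on $C_1$. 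Encoding each $K_4$ by its starting index on $C_1$ gives a set $K_j \subseteq [n]$ with $|K_j| = \zeta(C_1 \cup C_j) \ge xn/4$.

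Next, apply Lemma~\ref{johnson} with parameter $x/4$. If the collection $\{K_j\}_{j \ne 1}$ had all pairwise intersections bounded by $(1-\varepsilon)(x/4)^2 n = (1-\varepsilon)x^2 n/16$, its size would be at most $q(x/4,\varepsilon)$. Form the auxiliary graph $H$ on vertex set $X\setminus\{C_1\}$ by joining $C_j$ and $C_l$ exactly when $|K_j \cap K_l| > (1-\varepsilon)x^2n/16$; Lemma~\ref{johnson} translates into $\alpha(H) \le q(x/4,\varepsilon)$. A standard Erd\H{o}s--Szekeres/Ramsey estimate then produces a clique in $H$ of size at least $(k-1)^{1/(q(x/4,\varepsilon)+1)}$, provided $k$ exceeds an absolute threshold $\theta(x,\varepsilon)$ chosen to absorb multiplicative constants; take $X_2$ to be such a clique. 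Then $|X_2| \ge k^{\delta(x,\varepsilon)}$.

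Finally, for any two cycles $C_j, C_l \in X_2$ each common index in $K_j \cap K_l$ identifies a $K_4$ on the same four consecutive vertices sitting inside both $C_1 \cup C_j$ and $C_1 \cup C_l$. Applying Observation~\ref{psizeta} with the witness $C = C_1$, $D_1 = C_j$, $D_2 = C_l$ now yields $\psi(C_j \cup C_l) \ge |K_j \cap K_l| > (1-\varepsilon)x^2 n/16$, which is the desired bound.

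The main obstacle I anticipate is the structural claim that every $K_4$ in $C_1 \cup C_j$ occupies four consecutive vertices of $C_1$; this is what converts "many common $K_4$'s" (the hypothesis of Observation~\ref{psizeta}) into "large set intersection" (the hypothesis of Lemma~\ref{johnson}) via a single encoding into $[n]$ of size ratio $x/4$. The Ramsey extraction is then routine, and the $(q(x/4,\varepsilon)+1)^{-1}$ exponent (rather than the tight $q^{-1}$) leaves enough slack to keep $\theta(x,\varepsilon)$ simple.
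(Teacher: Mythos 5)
Your proof is correct and follows essentially the same route as the paper: encode each $K_4$ of $C_1\cup C_j$ by its starting position on a fixed reference cycle, invoke Lemma~\ref{johnson} with parameter $x/4$ to bound the independence number of an auxiliary graph, and extract a large clique via the Ajtai--Koml\'os--Szemer\'edi Ramsey estimate; Observation~\ref{psizeta} converts common $K_4$'s into $\psi$ exactly as in the paper. Two small organizational differences: the paper builds the auxiliary graph on all of $X$ with adjacency defined directly by $\psi(C_i\cup C_j)\ge(1-\varepsilon)x^2n/16$ and only introduces the encoding inside the proof that the independence number is at most $q(x/4,\varepsilon)+1$, whereas you fix $C_1$ once, work on $X\setminus\{C_1\}$, and define adjacency by set intersection (a subgraph of the paper's graph), which actually gives the marginally cleaner bound $\alpha(H)\le q(x/4,\varepsilon)$; and you make explicit the consecutivity of the four $K_4$-vertices along $C_1$, which the paper uses tacitly when asserting $|S_j|\ge xn/4$ --- a worthwhile clarification, though in your degree count the identity $a+b=6$ should read $a+b\ge6$ (edges shared by both cycles are counted twice), which together with $a,b\le3$ for $n\ge5$ still forces $a=b=3$ and the path structure.
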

\begin{proof}
Let $A$ be a graph whose vertex set is $X$, and two cycles $C_i$ and $C_j$ are connected by an edge if and only if $\frac{(1-\varepsilon)x^2n}{16} \leq \psi(C_i \cup C_j)$. Our aim is to show that $|X|^{\delta(x,\varepsilon)} \leq \omega(A)$ (the latter denoting the largest size of a clique in $A$). This will follow from Ramsey's theorem and an upper bound we shall obtain on $\alpha(A)$.
\begin{claim}\label{smallalpha}
 $\alpha(A) \le q(\frac{x}{4},\varepsilon)+1=\frac{4-x(1-\varepsilon)}{x\varepsilon}+1$.
\end{claim}
\begin{proof}
Suppose to the contrary that there exists an independent set $D_1,D_2, \ldots D_p$ in $A$, where $p=\lceil q(\frac{x}{4},\varepsilon)\rceil+2$.  By relabeling the vertices, we can assume that $D_1$ is the cycle $(1,2,\ldots,n)$. For every $1< j \le p$ let $S_j$ be the set of those $1 \le i \le n$ for which $D_1 \cup D_j$ contains a
$K_4$ on the vertices $i, i+1, i+2, i+3 (\bmod n)$.

By the assumption of the lemma   $\zeta(D_1 \cup D_j) \ge \frac{xn}{4}$, and hence  $|S_j| \ge \frac{xn}{4} $ for all $j \le p$.
Since the cycles $D_j$ are independent in $A$,  $\psi(D_i\cup D_j) < \frac{(1-\varepsilon)x^2}{16}$  whenever   $i \neq j$. By Observation \ref{psizeta} this implies that $|S_i \cap S_j| < \frac{(1-\varepsilon)x^2}{16}n$. \\
These combined yield a  contradiction to Lemma \ref{johnson}.
\end{proof}

By a result of Ajtai, Komlos and Szemeredi \cite{aks}, for fixed $s$  and $t$ large enough  we have the following bound on the Ramsey numbers: $$R(s,t) \leq c_s \frac{t^{s-1}}{\log(t)^{s-2}}, $$

implying $R(s,t) \leq  t^s$  for fixed $s$ and large enough $t$. Thus for fixed $s$ and large enough $n$ if G is a graph on $n$ vertices with $\alpha(G)\leq s$ then $\omega(G)\geq n^{\frac{1}{s}}$. Applying this to the graph $A$, and using Claim \ref{smallalpha}, we obtain that $\omega(A) \geq |X|^{\left(q(\frac{x}{4},\varepsilon)+1\right)^{-1}}=|X|^{\delta(x, \varepsilon)}$ and the proof is complete.
\end{proof}

For a family $X$ of Hamiltonian cycles let $m(X)=\min_{C \neq D \in X}\frac{\zeta(C \cup D)}{n} \in [0,1]$. For the sake of readability, we will often write $m$ for $m(X)$.

\begin{lem} \label{step}
Let $X$ be a set of Hamiltonian cycles, if
$\frac{\psi(C\cup D)}{n} < (1-\varepsilon)\left(
\frac{\zeta(C \cup D)}{n}\right)^2-\varepsilon$ for every pair $C\neq D$ of cycles in $X$ then there exists a subset $Y$ of $X$ of size at least $|X|^{\delta(4m, \varepsilon)}$
such that
 $$ m(Y)^2 > m(X)^2 +\frac{ \varepsilon}{(1-\varepsilon)}. $$
\end{lem}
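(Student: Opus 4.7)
The plan is to combine the assumed upper bound on $\psi/n$ with a lower bound supplied by Lemma~\ref{iterating}. The hypothesis forces $\psi(C\cup D)/n$ strictly below $(1-\varepsilon)(\zeta(C\cup D)/n)^2 - \varepsilon$, whereas Lemma~\ref{iterating}, applied to $X$ itself, produces a large subfamily on which $\psi(C\cup D)/n$ is bounded from \emph{below} in terms of $m(X)$. Forcing both inequalities to hold simultaneously will push $(\zeta(C\cup D)/n)^2$ strictly above $m(X)^2$ on every pair of this subfamily, which is exactly the required improvement.

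Concretely, I would set $x := 4m(X)$. By the definition of $m(X)$, every pair $C \neq D \in X$ satisfies $\zeta(C\cup D)/n \geq m(X) = x/4$, which is precisely the input hypothesis of Lemma~\ref{iterating}. Assuming $|X| > \theta(x,\varepsilon)$ (otherwise the conclusion is vacuous once $|X|^{\delta(4m,\varepsilon)} \leq 1$), Lemma~\ref{iterating} produces a subset $Y \subseteq X$ of size at least $|X|^{\delta(4m(X),\varepsilon)}$ such that
$$\psi(C\cup D) \;\geq\; \frac{(1-\varepsilon)x^2 n}{16} \;=\; (1-\varepsilon)\, m(X)^2\, n$$
for every pair $C \neq D \in Y$.

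Now fix any such pair. Since $Y \subseteq X$, the hypothesis of the present lemma is available, and combined with the previous lower bound it gives
$$(1-\varepsilon)\, m(X)^2\, n \;\leq\; \psi(C\cup D) \;<\; \left((1-\varepsilon)\bigl(\zeta(C\cup D)/n\bigr)^2 - \varepsilon\right) n.$$
Dividing through by $(1-\varepsilon)n > 0$ and rearranging yields $\bigl(\zeta(C\cup D)/n\bigr)^2 > m(X)^2 + \varepsilon/(1-\varepsilon)$ for every pair in $Y$. Taking the minimum over pairs in $Y$ then produces $m(Y)^2 > m(X)^2 + \varepsilon/(1-\varepsilon)$. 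The only delicate point is the bookkeeping that matches the scaling parameter of Lemma~\ref{iterating} to $m(X)$ via $x = 4m(X)$ and that preserves the strict inequality when the non-strict lower bound from Lemma~\ref{iterating} is inserted into the strict hypothesis; beyond this the argument is purely algebraic, and I do not foresee any substantive obstacle.
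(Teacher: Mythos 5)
Your proof is correct and follows essentially the same route as the paper: apply Lemma~\ref{iterating} with $x = 4m(X)$ to extract the subset $Y$, then combine the resulting lower bound on $\psi/n$ with the hypothesized strict upper bound to push $(\zeta/n)^2$ above $m(X)^2 + \varepsilon/(1-\varepsilon)$ on every pair in $Y$. The paper simply instantiates this chain at the minimizing pair of $Y$ rather than universally and then taking a minimum, but the two formulations are equivalent for finite $Y$.
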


 \begin{proof}
By Lemma
\ref{quality_indep} and the fact that $d$ is positive $m>0$.
Applying Lemma \ref{iterating} with $x=4m$, we obtain $Y \subseteq X$  of size at least $ |X|^{\delta(4m,\varepsilon )}$,
such that every pair of cycles $C,D \in Y$ satisfies $\frac{\psi(C \cup D)}{n} \geq (1-\varepsilon)m^2$.
Let $C,D \in Y$ be  such that $\frac{\zeta(C\cup D)}{n}=m(Y)$.  By the  assumption of the lemma

 $$ (1-\varepsilon)m(Y)^2-\varepsilon = (1-\varepsilon)\left( \frac{\zeta(C\cup D)}{n}\right)^2-\varepsilon  > \frac{\psi(C \cup D)}{n} \geq  (1-\varepsilon)m^2, $$
which yields the desired result.

\end{proof}
\begin{corr}\label{exists}
If $X$ is a set of Hamiltonian cycles satisfying
$$|X|^{\left( \delta(4m,\varepsilon)^{\frac{1-\varepsilon}{\varepsilon}}\right)} > \theta(m, \varepsilon)$$
then there exists a pair $C\neq D$ of cycles in $X$ such that
$$\frac{\psi(C\cup D)}{n} \geq (1-\varepsilon)\left(
\frac{\zeta(C \cup D)}{n}\right)^2-\varepsilon.$$
\end{corr}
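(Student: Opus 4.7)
The approach is a contrapositive--plus--iteration argument driven entirely by Lemma \ref{step}. Assume, for contradiction, that every pair $C\neq D$ of cycles in $X$ satisfies the strict inequality $\frac{\psi(C\cup D)}{n} < (1-\varepsilon)\bigl(\frac{\zeta(C\cup D)}{n}\bigr)^2 - \varepsilon$. Since any subfamily $Y\subseteq X$ inherits this property, Lemma \ref{step} can be applied iteratively: setting $Y_0 := X$ and $m_i := m(Y_i)$, each step produces $Y_{i+1}\subseteq Y_i$ with $|Y_{i+1}| \geq |Y_i|^{\delta(4m_i,\varepsilon)}$ and $m_{i+1}^2 > m_i^2 + \varepsilon/(1-\varepsilon)$, provided $|Y_i|$ is large enough to invoke Lemma \ref{iterating}.

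After $K := \lceil(1-\varepsilon)/\varepsilon\rceil$ iterations the accumulated inequality reads $m_K^2 > m_0^2 + 1 > 1$, contradicting the trivial estimate $m \leq 1$. Consequently the iteration must break down at some step $i \leq K$, and the only way this can happen is that the hypothesis of Lemma \ref{step} fails for $Y_i\subseteq X$; that is, some pair in $X$ witnesses $\frac{\psi(C\cup D)}{n} \geq (1-\varepsilon)\bigl(\frac{\zeta(C\cup D)}{n}\bigr)^2 - \varepsilon$, which is the stated conclusion.

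To justify that the iteration survives all $K$ steps under the given size hypothesis, I use two monotonicity observations. First, $q(x,\varepsilon) = (1-x(1-\varepsilon))/(x\varepsilon)$ is decreasing in $x$, so $\delta(x,\varepsilon) = (q(x/4,\varepsilon)+1)^{-1}$ is nondecreasing in $x$; since $m_i \geq m_0$, this gives $\delta(4m_i,\varepsilon) \geq \delta(4m_0,\varepsilon)$, and (using $\delta\leq 1$) therefore $|Y_i| \geq |X|^{\delta(4m_0,\varepsilon)^i}$. Second, the threshold $\theta$ in Lemma \ref{iterating} is controlled by the Ramsey parameter $s = q(m_i,\varepsilon) + 1$, which is nonincreasing in $m_i$; hence $\theta(m_i,\varepsilon) \leq \theta(m_0,\varepsilon)$.

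Combining these, the condition $|Y_i| > \theta(m_i,\varepsilon)$ needed at the $(i{+}1)$-st step is implied by $|X|^{\delta(4m_0,\varepsilon)^i} > \theta(m_0,\varepsilon)$, and for $i\leq K$ this follows from the corollary's hypothesis $|X|^{\delta(4m,\varepsilon)^{(1-\varepsilon)/\varepsilon}} > \theta(m,\varepsilon)$. I expect the main delicate point to be exactly this bookkeeping of the cumulative exponent across the $K$ iterations, together with checking the two monotonicities; once they are in hand, the contradiction is immediate and the corollary follows.
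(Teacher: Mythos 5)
Your proof is correct and follows the same route as the paper's: negate the conclusion, iterate Lemma \ref{step} to build a nested chain $X = Y_0 \supseteq Y_1 \supseteq \cdots$ with $m(Y_i)^2$ increasing by at least $\varepsilon/(1-\varepsilon)$ per step, and derive a contradiction with $m \le 1$ after about $(1-\varepsilon)/\varepsilon$ steps. You spell out slightly more of the bookkeeping than the paper (in particular the monotonicity of $\theta$ alongside that of $\delta$, both of which the paper uses but compresses into a single sentence), but the underlying argument is identical.
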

\begin{proof}
Assume negation. Applying Lemma \ref{step} repeatedly, we obtain then
 a sequence $X=X_1 \supseteq X_2 \supseteq \ldots \supseteq X_p$ of sets of Hamiltonian cycles, such that $m(X_{i+1})^2 \ge m(X_{i})^2 +
 \frac{\varepsilon}{(1-\varepsilon)}$ and $|X_{i+1}| \ge |X|^{\delta(4m(X_i), \varepsilon)}$  for all $i<p$.
 Since $\delta(x,\varepsilon)$ is increasing in $x$ and the sequence $m(X_i)$ is increasing, the assumption on the size of $X$ thus leads to the conclusion that for $p$ as large as  $\frac{1-\varepsilon}{\varepsilon}+1$
  we still have $X_p \neq \emptyset$. But this yields $m(X_p) >1$, which is impossible since by definition $m \in [0,1]$.
 \end{proof}

We can now obtain our goal - a lower bound on the threshold constant $c_t$. Remember that $c_t$ is a real number such that for $c<c_t$ the value of $f(n,cn)$ is bounded by a constant independently of $n$, and for $c >c_n$ this value is exponential in $n$.

\begin{thm}
 $c_t\ge\frac{45}{169} \approx 0.26627.$
\end{thm}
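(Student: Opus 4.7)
The plan is to combine Corollary \ref{exists} with Lemma \ref{quality_indep} and carry out a short one-variable optimization. Suppose, for contradiction, that $c < 45/169$ yet $f(n, cn)$ is unbounded, and fix a small $\varepsilon > 0$. Then for arbitrarily large $k$ one has a family $X$ of $k$ Hamiltonian cycles on $n$ vertices such that $\alpha(C \cup D) \leq cn$ for every pair $C \neq D$ in $X$.

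First I would dispose of the degenerate case $m(X) \approx 0$. If some pair $C, D \in X$ satisfies $\zeta(C \cup D)/n < \eta$ for an appropriately small constant $\eta > 0$, then Lemma \ref{quality_indep}, even after discarding the nonnegative $\psi$ term, gives $\alpha(C \cup D)/n \geq 7/26 - \eta/13 - o(1)$. Since $7/26 = 91/338 > 90/338 = 45/169$, taking $\eta$ small makes this already exceed $c$, contradicting $\alpha(C \cup D) \leq cn$. So we may assume $m(X) \geq \eta$ for some fixed positive $\eta$.

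With $m(X)$ bounded below and $|X|$ unbounded, the hypothesis of Corollary \ref{exists} is satisfied once $|X|$ is large enough, yielding a pair $C, D \in X$ with $\psi(C \cup D)/n \geq (1-\varepsilon)(\zeta(C \cup D)/n)^2 - \varepsilon$. Substituting into Lemma \ref{quality_indep} and writing $z = \zeta(C \cup D)/n$,
$$\frac{\alpha(C \cup D)}{n} \;\geq\; \frac{7}{26} - \frac{z}{13} + \frac{(1-\varepsilon)z^2 - \varepsilon}{2} - o(1).$$
This convex function of $z$ is minimized at $z^\ast = 1/(13(1-\varepsilon))$, which for small $\varepsilon$ lies safely inside $[0, 1/4]$. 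The minimum value is $\tfrac{7}{26} - \tfrac{1}{338(1-\varepsilon)} - \tfrac{\varepsilon}{2}$, tending to $7/26 - 1/338 = 45/169$ as $\varepsilon \to 0$. Hence $\alpha(C \cup D)/n \geq 45/169 - O(\varepsilon) - o(1)$, which contradicts $\alpha(C \cup D) \leq cn$ once $\varepsilon$ is chosen small enough relative to $45/169 - c$ and $n$ is large enough.

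The only real subtlety is the initial case split: Corollary \ref{exists} becomes vacuous when $m(X)$ is allowed to approach zero (the exponent $\delta(4m,\varepsilon)$ collapses), so the small-$\zeta$ regime has to be handled separately via the bare inequality of Lemma \ref{quality_indep}. The optimization itself is the cleanest possible --- the critical point $z=1/13$ sits in the interior of $[0,1/4]$ and produces exactly the advertised constant $45/169 = 7/26 - 1/338$, which retroactively explains the arithmetic form of the bound.
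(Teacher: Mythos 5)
Your proof is correct and follows essentially the same route as the paper: invoke Corollary \ref{exists} to extract a pair $C,D$ with $\psi(C\cup D)/n \gtrsim (\zeta(C\cup D)/n)^2$, feed this into Lemma \ref{quality_indep}, and optimize the resulting quadratic in $z=\zeta/n$, landing on $7/26-1/338=45/169$. The one place you are more explicit than the paper is the preliminary observation that $m(X)$ is bounded below by a fixed positive constant (so that the exponent $\delta(4m,\varepsilon)$ in Corollary \ref{exists} does not degenerate); the paper handles this only obliquely, via the remark inside Lemma \ref{step} that ``$m>0$'' because $d>0$, which relies on exactly the computation you spell out ($\zeta/n\geq 13d-o(1)$ once $\psi\geq 0$ is discarded in Lemma \ref{quality_indep}). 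So this is the same argument, with one corner smoothed rather than left implicit.
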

\begin{proof}

Let $\varepsilon>0$ be fixed, and let $d$ be positive such that
$c_t<\frac{7}{26}-d$. Let $n$ be large and $X$ be a large collection of Hamiltonian cycles on $n$ vertices such that for every pair  $C\neq D$ of cycles in $X$ we have $\alpha(C \cup D)\leq (\frac{7}{26}-d)n$. Here ``large'' is dictated by Corollary \ref{exists}

By Corollary \ref{exists} there exist cycles $C\neq D \in X$ for which $\frac{\psi(C\cup D)}{n} \geq (1-\varepsilon)\left( \frac{\zeta(C\cup D)}{n} \right)^2-\varepsilon$. By Lemma \ref{quality_indep}

$$\left(\frac{7}{26}-d\right)n \geq \alpha(C \cup D) \geq  \frac{7}{26}n- \frac{1}{13}\zeta(C \cup D)+\frac{1}{2}\psi(C \cup D) -O(1) $$

and thus

$$-d \geq -\frac{1}{13}\frac{\zeta(C \cup D)}{n}+\frac{1}{2}\frac{\psi(C \cup D)}{n} -\frac{O(1)}{n}$$

$$-d \geq -\frac{1}{13}\frac{\zeta(C \cup D)}{n}+\frac{1}{2}((1+\varepsilon)\left(\frac{\zeta(C\cup D)}{n}\right)^2-\varepsilon) -O(1)$$

Since we can choose $n$ arbitrarily large and $\varepsilon$ arbitrarily small, it follows that

 $$-d \geq -\frac{1}{13}\frac{\zeta(C \cup D)}{n}+\frac{1}{2}\left(\frac{\zeta(C\cup D)}{n}\right)^2$$

by taking the minimum of the right hand side we get that

$$d \leq \frac{1}{338} \approx 0.002958 $$

for every choice of $d$ where $\frac{7}{26}-d >c_t$, thus $c_t \geq \frac{7}{26}-\frac{1}{338}=\frac{45}{169} \approx 0.266272$

\end{proof}


\begin{bibdiv}
\begin{biblist}

\bib{abt}{article}{
  title={The independence ratio and maximum degree of a graph},
  author={M. Albertson},
    author={B. Bollob\'as},
      author={S. Tucker},
    journal={Congressus Numerantium},
  date={1976},
}

\bib{alonspencer}{book}{
  title={The probabilistic method - Fourth edition},
  author={N. Alon},
    author={J. H. Spencer},
    date={2016},
    publisher={Wiley},
    address={Tel-Aviv},
}

\bib{aks}{article}{
  title={A note on Ramsey numbers},
  author={M. Ajtai},
      author={J. Komlos},
       author={E. Szemeredi},
    journal={J. Combin. Theory Ser. A},
    volume={29},
    pages={354-360},
  date={1980},
}
\bib{csoka}{article}{
  title={Independent sets and cuts in large-girth regular graphs},
  author={E. Csóka},
    journal={\tt arXiv:1602.02747 [math.CO]},
  date={2016},
}

\bib{janson}{book}{
  title={Random graphs},
  author={S. Janson},
    author={T. Łuczak},
    author={A. Rucinski},
    date={2000},
    publisher={Wiley},
    address={New York},
}

\bib{7/26}{article}{
  title={Finding independent sets in $K_4$-free $4$-regular connected graphs},
  author={S. C. Locke},
    author={F. Lou},
    journal={Journal of combinatorial theory, Series B},
    volume={71},
    pages={85-110},
  date={1997},
}

\bib{lovasz}{article}{
  title={A note on Ramsey numbers},
  author={L. Lovász},
    journal={J. Combin. Theory Ser. A},
    volume={25},
    pages={319-324},
  date={1978},
}

\bib{schrijver}{article}{
  title={Vertex-critical subgraphs of Kneser graphs},
  author={A. Schrijver},
    journal={Nieuw Arch. Wiskd.},
    volume={26},
    pages={ 454?461},
  date={1978},
}

\end{biblist}
\end{bibdiv}

\end{document}